\documentclass[reqno,a4paper,twoside,english]{amsart}

\usepackage{quoting}

\usepackage[cmyk,usenames,dvipsnames,svgnames]{xcolor}

\usepackage{amssymb,dsfont,mathrsfs,verbatim,bm,mathtools, float, longtable, tabu, booktabs,amsrefs}
\usepackage{babel,changes}
\usepackage{microtype}
\usepackage{enumitem}
\usepackage{graphicx}
\usepackage[figuresright]{rotating}

\usepackage{float}
\usepackage[section]{placeins}

\usepackage{tikz, tikz-cd}
\usetikzlibrary{matrix}
\usetikzlibrary{shapes}
\usepackage{bm}

\usepackage{lmodern}

\usepackage[section]{placeins}
\usepackage{cancel}

\usepackage[raggedright]{titlesec}
\titleformat{\section}
{\normalfont\Large\bfseries\center}{\thesection}{1em}{}
\titleformat{\subsection}
{\normalfont\large\bfseries}{\thesubsection}{1em}{}
\titleformat{\subsubsection}
{\normalfont\normalsize\bfseries}{\thesubsubsection}{1em}{}
\titleformat{\paragraph}[runin]
{\normalfont\normalsize\bfseries}{\theparagraph}{1em}{}
\titleformat{\subparagraph}[runin]
{\normalfont\normalsize\bfseries}{\thesubparagraph}{1em}{}
\titlespacing*{\section} {0pt}{3.5ex plus 1ex minus .2ex}{2.3ex plus .2ex}
\titlespacing*{\subsection} {0pt}{3.25ex plus 1ex minus .2ex}{1.5ex plus .2ex}
\titlespacing*{\subsubsection}{0pt}{3.25ex plus 1ex minus .2ex}{1.5ex plus .2ex}
\titlespacing*{\paragraph} {0pt}{3.25ex plus 1ex minus .2ex}{1em}
\titlespacing*{\subparagraph} {\parindent}{3.25ex plus 1ex minus .2ex}{1em}

\makeatletter
\renewcommand*\env@matrix[1][\arraystretch]{%
  \edef\arraystretch{#1}%
  \hskip -\arraycolsep
  \let\@ifnextchar\new@ifnextchar
  \array{*\c@MaxMatrixCols c}}
\makeatother

\usepackage{titletoc}
\setcounter{tocdepth}{3}
\contentsmargin{2em}
\dottedcontents{section}[2.7em]{}{2.8em}{1pc}
\dottedcontents{subsection}[3.7em]{}{3.4em}{1pc}
\dottedcontents{subsubsection}[7.6em]{}{3.2em}{1pc}
\dottedcontents{paragraph}[10.3em]{}{3.2em}{1pc}


\newcommand{\ra}[1]{\renewcommand{\arraystretch}{#1}}

\newcommand{\fracpadding}{}
\newcommand{\setfracpadding}[1][2pt]{%
  \sbox0{$\frac{1}{2}$}%
  \dimen0=\ht0 \advance\dimen0 #1\relax
  \dimen2=\dp0 \advance\dimen2 #1\relax
  \edef\fracpadding{\vrule width 0pt height \the\dimen0 depth \the\dimen2\relax}%
}

\newcommand{\ot}{\otimes}
\newcommand{\hs}{\hspace{-0.6pt}}

\newcommand{\hf}{\hspace{-0.9pt}}
\newcommand{\hg}{\hspace{-2.8pt}}
\newcommand{\hl}{\hspace{-1.6pt}}

\DeclareMathOperator{\LBE}{LBE}
\DeclareMathOperator{\RBE}{RBE}
\DeclareMathOperator{\LSQ}{LSQ}
\DeclareMathOperator{\RSQ}{RSQ}

\DeclareMathOperator{\ide}{Id}

\DeclareMathOperator{\chr}{char}

\allowdisplaybreaks[4]

\numberwithin{table}{section}
\numberwithin{equation}{section}
\numberwithin{figure}{section}
\theoremstyle{plain}
\newtheorem{theorem}{Theorem}[section]
\theoremstyle{plain}
\newtheorem{lemma}[theorem]{Lemma}
\newtheorem{definition}[theorem]{Definition}
\newtheorem{corollary}[theorem]{Corollary}
\theoremstyle{plain}
\newtheorem{proposition}[theorem]{Proposition}
\newtheorem{notation}[theorem]{Notation}

\theoremstyle{remark}
\newtheorem{remark}[theorem]{Remark}

\theoremstyle{plain}

\newtheorem{examples}[theorem]{Examples}

\newdimen\CdotAxis
\newcommand*{\CdotAux}[3]{%
  {%
    \settoheight\CdotAxis{$#2\vcenter{}$}%
    \sbox0{%
      \raisebox\CdotAxis{%
        \scalebox{#1}{%
          \raisebox{-\CdotAxis}{%
            $\mathsurround=0pt #2#3$%
          }%
        }%
      }%
    }%
    \dp0=0pt %
    \sbox2{$#2\bullet$}%
    \ifdim\ht2<\ht0 %
      \ht0=\ht2 %
    \fi
    \sbox2{$\mathsurround=0pt #2#3$}%
    \hbox to \wd2{\hss\usebox{0}\hss}%
  }%
}

\begin{document}

\title[Solutions of the braid equation with set-type square]{Solutions of the braid equation with set-type square}

\author{Jorge A. Guccione}
\address{Departamento de Matem\'atica\\ Facultad de Ciencias Exactas y Naturales-UBA, Pabell\'on~1-Ciudad Universitaria\\ Intendente Guiraldes 2160 (C1428EGA) Buenos Aires, Argentina.}
\address{Instituto de Investigaciones Matem\'aticas ``Luis A. Santal\'o"\\ Pabell\'on~1-Ciudad Universitaria\\ Intendente Guiraldes 2160 (C1428EGA) Buenos Aires, Argentina.}
\email{vander@dm.uba.ar}

\author{Juan J. Guccione}
\address{Departamento de Matem\'atica\\ Facultad de Ciencias Exactas y Naturales-UBA\\ Pabell\'on~1-Ciudad Universitaria\\ Intendente Guiraldes 2160 (C1428EGA) Buenos Aires, Argentina.}
\address{Instituto Argentino de Matem\'atica-CONICET\\ Saavedra 15 3er piso\\ (C1083ACA) Buenos Aires, Argentina.}
\email{jjgucci@dm.uba.ar}

\thanks{Jorge A. Guccione and Juan J. Guccione were supported by UBACyT 20020150100153BA (UBA) and PIP 11220110100800CO (CONICET)}

\author{Christian Valqui}
\address{Pontificia Universidad Cat\'olica del Per\'u, Secci\'on Matem\'aticas, PUCP,
Av. Universitaria 1801, San Miguel, Lima 32, Per\'u.}

\address{Instituto de Matem\'atica y Ciencias Afines (IMCA) Calle Los Bi\'ologos 245. Urb San C\'esar.
La Molina, Lima 12, Per\'u.}
\email{cvalqui@pucp.edu.pe}

\thanks{Christian Valqui was supported by PUCP-DGI- ID 453 - CAP 2017-1-0035.}

\subjclass[2010]{16T25}
\keywords{Orders, Braid equation, Non-degenerate solution}

\begin{abstract}
For a family of height one orders $(X,\le)$ and each non-degenerate solution $r_0\colon X\times X\longrightarrow X\times X$ of the set-theoretic braid equation on $X$ satisfying suitable conditions, we obtain all the non-degenerate solutions of the braid equation on the incidence coalgebra of $(X,\le)$ that extend~$r_0$.
\end{abstract}

\maketitle

\setcounter{tocdepth}{1}
\tableofcontents

\section*{Introduction}
Let $V$ be a vector space over a field $K$ and let $r\colon V\ot_K V\longrightarrow V\ot_K V$ be a bijective linear operator. We say that $r$ satisfies the {\em braid equation} if
\begin{equation*}
r_{12} \circ r_{23}\circ r_{12} = r_{23} \circ r_{12}\circ r_{23},
\end{equation*}
where $r_{ij}$ denotes $r$ acting on the $i$-th and $j$-th coordinates. Since the eighties many solutions of the braid equation have been found, many of them being deformations of the flip. It is interesting to obtain solutions that are not of this type, and in \cite{Dr}, Drinfeld proposed to study the most simple of them, namely, the set-theoretic ones, i.e. pairs $(X,r_0)$, where $X$ is a set and
$$
r_0\colon X\times X\longrightarrow X\times X
$$
is an invertible map satisfying the braid equation. Each one of these solutions yields in an evident way a linear solution on the vector space with basis~$X$. From a structural point of view this approach was considered first by Etingof, Schedler and Soloviev \cite{ESS} and Gateva-Ivanova and Van den Bergh \cite{GIVdB} for involutive solutions, and then by Lu, Yan and Zhu \cite{LYZ} and Soloviev \cite{So} for non-degenerate not necessarily involutive solutions. In the last two decades the theory has developed rapidly, and now it is known that it has connections with bijective 1-cocycles, Bierbach groups and groups of I-type, involutive Yang-Baxter groups, Garside structures, biracks, cyclic sets, braces, Hopf algebras, matched pairs, left symmetric algebras, etcetera (see, for instance \cite{AGV}, \cite{CJO}, \cite{CJO2}, \cite{CJR}, \cite{De}, \cite{GI}, \cite{Ru}, \cite{Ta}).

\smallskip

Suppose now that $(X,\le)$ is a locally finite poset and consider its incidence coalgebra~$D$. We identify each $a\in X$ with the  pair $(a,a)$ in $D$. In~\cite{GGV1} the following problem was posed:
\begin{quote}
Let $r_0\colon X\times X\longrightarrow X\times X$ be a non-degenerated solution of the set theoretical braid equation. Find necessary and sufficient conditions in order that $r_0$ is the restriction of a non-degenerate coalgebra automorphism $r$ of $D\ot D$, which is a solution of the braid equation, and then find all such extensions.
\end{quote}
Let $r\colon D\ot D \longrightarrow D\ot D$ be a linear map. For $a\le b$ and $c\le d$ write
\begin{equation}\label{def de los lambdas}
r((a,b)\ot (c,d))= \sum_{e\le f}\sum_{g\le h} \lambda_{a|b|c|d}^{e|f|g|h} (e,f)\ot (g,h),
\end{equation}
with $\lambda_{a|b|c|d}^{e|f|g|h}\in K$. Assume that $r$ is a non-degenerate coalgebra automorphism that induces a non-degenerate solution $r_0\colon X\times X \longrightarrow X \times X$ of the braid equation. In~\cite{GGV1}*{Proposition~4.3} we give the equations that the coefficients $\lambda_{a|b|c|d}^{e|f|g|h}$'s must satisfy in order that $r$ is a solution of the braid equation. In Corol\-lary~\ref{extremales y altura 1 son suficientes} we prove that in fact it suffices to solve a relatively small subset of these equations, corresponding to lower extremal inclusions (see Definition~\ref{definicion extremal}). For instance, when $X=\{x,y\}$ with $x<y$, then by \cite{GGV1}*{Corollary 2.5}, necessarily $r_0$ is the flip and the number of equations we must solve according to~\cite{GGV1}*{Proposition~4.3} is $125$. From these $8$ are trivially true and $36$ are solved by a general result in~\cite{GGV1}. Our result shows that it suffices to solve $7$ of the remaining $81$ equations.

Although the general problem seems to be difficult even with this reduction, the above mentioned result allows us in Section~\ref{seccion A family of examples} to make significant progress towards the solution of the following problem:
\begin{quote}
Given $r_0$ as above, find all the non-degenerate coalgebra automorphisms $r\colon D\ot D \longrightarrow D\ot D$ fulfilling the following conditions:  it is a solution of the braid equation, it induces $r_0$ on $X\times X$ and has set-type square up to height~$1$ (see Definition~\ref{def de set type square up to height 1}), and then determine which ones of these maps have set-type square (see Definition~\ref{def de set type square}).
\end{quote}
In this section we consider a height $1$ poset $(X,\le)$ with cardinal $u+v$, having $u$ minimal elements $a_0,\dots, a_{u-1}$ and $v$ maximal elements $b_0,\dots, b_{v-1}$ such that $a_i<b_j$ for all $i,j$. We assume that $u$ and $v$ are coprime, and we consider a non-degenerate bijective set-theoretic solution $r_0$ of the braid equation on $X$. Moreover, we also assume that there exist poset automorphisms $\phi_r$ and $\phi_l$ of $X$ such that
$$
r_0(x,y)=(\phi_l(y),\phi_r(x))
$$
and $\phi_r\circ \phi_l$ induces an $uv$-cycle on the set of all the pairs $(a_i,b_j)$. Our main result is Theorem~\ref{teorema principal}, in which we determine all non-degenerate coalgebra automorphisms of $D\ot D$ with set-type square up to height~$1$, that are solutions of the braid equation and induce $r_0$ on $X\times X$. This gives various infinite families of solutions of the braid equation. Finally, in Proposition~\ref{cuando es de cuadrado conjuntista}, we determine which ones of these solutions have set-type square.

\section{Preliminaries}\label{Seccion Preliminares}
A {\em partially ordered set} or {\em poset} is a pair $(X,\le)$ consisting of a set $X$ endowed with a binary relation~$\le$, called {\em an order}, that is reflexive, antisymmetric and transitive.  A {\em connected component} of $X$ is an equivalence class of the equivalence relation generated by the relation $x\sim y$ if $x$ and $y$ are comparable. The {\em height} of a finite chain $a_0<\cdots < a_n$ is $n$. The {\em height} $\mathfrak{h}(X)$ of a finite poset $X$ is the height of its largest chain. Let $a,b\in X$. The {closed interval} $[a,b]$ is the set of all the elements $c$ of $X$ such that $a\le c \le b$. We say that~$b$ covers $a$, and we write $a\prec b$ (or $b\succ a$), if $[a,b]=\{a,b\}$. A poset $X$ is {\em locally finite} if $[a,b]$ is finite for all $a,b\in X$.

In the sequel $(X,\le)$ is a locally finite poset and $Y\coloneqq\{(a,b)\in X\times X: a\le b\}$. It is well known that $D\coloneqq KY$ is a counitary coalgebra, called the {\em incidence coalgebra of $X$}, via
$$
\Delta(a,b)\coloneqq \sum_{c\in [a,b]} (a,c)\ot (c,b)\quad\text{and}\quad \epsilon(a,b)=\delta_{ab}.
$$
Consider $KX$ endowed with the coalgebra structure determined by requiring that each $x\in X$ is a group like element. The $K$-linear map $\iota \colon KX \to D$ defined by $\iota(x)\coloneqq (x,x)$ is an injective coalgebra morphism, whose image is the subcoalgebra of $D$ spanned by its group like elements.

Recall from~\cite{ESS} that a map $r_0\colon X\times X\longrightarrow X\times X$ is called {\em non-degenerate} if the maps ${}^a\!(-)$ and $(-)\hs^b$ from $X$ to $X$, defined by $({}^a\!b,a\hs^b)\coloneqq r_0(a,b)$ are bijective for all $a,b\in X$.

Let $r$ be a coalgebra automorphism of $D\ot D$ and let
$$
\sigma\coloneqq (D\ot \epsilon)\circ r\quad\text{and} \quad \tau\coloneqq (\epsilon \ot D)\circ r.
$$
We say that $r$ is {\em non-degenerate} if the maps
$$
(D \ot \sigma)\circ (\Delta \ot D)\quad\text{and}\quad (\tau \ot D)\circ (D\ot \Delta)
$$
 are isomorphisms (see~\cite{GGV1}*{Subsection~1.1}).

Let $r \colon D\ot D \longrightarrow D\ot D$ be a linear map and let
$$
\bigl(\lambda_{a|b|c|d}^{e|f|g|h}\bigr)_{(a,b),(c,d),(e,f),(g,h)\in Y}
$$
be as in equality~\eqref{def de los lambdas}. In~\cite{GGV1}*{Section~2 and Theorem~3.4} we prove that $r$ is a non-degenerate coalgebra automorphism if and only if it induces by restriction a non-degenerate bijection $r_0 \colon X\times X \longrightarrow X\times X$ and
\begin{enumerate}[itemsep=0.7ex, topsep=0.7ex, label=\arabic*)]

\item for $a\le b$ and $c\le d$,
\begin{equation}\label{conjuntista}
\qquad\quad \sum_{e,g} \lambda_{a|b|c|d}^{e|e|g|g}=\delta_{ab}\delta_{cd};
\end{equation}

\item the maps ${}^a\!(-)$ and $(-)\hs^b$ are automorphisms of posets;

\item if $a$ and $b$ belong to the same component of $X$, then ${}^a\!(-)={}^b\!(-)$ and $(-)\hs^a=(-)\hs^b$;

\item if $a\le b$, $c\le d$, $e\le f$, $g\le h$ and $\lambda_{a|b|c|d}^{e|f|g|h}\ne 0$, then $a\hs^c\le g\le h\le b\hs^c$ and ${}^a\!c\le e \le f \le {}^a\!d$;

\item if $a\le b$, $c\le d$ $e\le f$, $g\le h$, $a\hs^c\le g\le h\le b\hs^c$ and ${}^a\!c\le e \le f \le {}^a\!d$, then
\begin{equation} \label{split}
\quad\qquad \lambda^{e|f|g|h}_{a|b|c|d} = \lambda_{a | z\hs^{\bar{c}} | c | {}^{\bar{a}}\!y}^{e|y|g|z}
\lambda_{z\hs^{\bar{c}}| b | {}^{\bar{a}}\!y | d}^{y|f|z|h}
\end{equation}
for each $y,z\in X$ such that $e\le y \le f$ and $g\le z \le h$.

\end{enumerate}
By \cite{GGV1}*{Remark~2.1}, we know that $\lambda_{x|x|y|y}^{{}^x \hf y|{}^x \hf y|x^y|y^x}=1$ for all $x,y\in X$. We will use freely this fact.

\section{Factorization of solutions}
Let $r \colon D\ot D \longrightarrow D\ot D$ be a non-degenerate coalgebra automorphism that induces a non-degenerate solution $r_0 \colon X \times X \to X\times X$ of the braid equation.

\smallskip

Let $(a,b),(c,d),(e,f),(g,h),(i,j),(k,l)\in Y$ and let
\begin{equation*}
T\coloneqq [a,b]\times[c,d]\times[e,f]\quad\text{and}\quad S\coloneqq [g,h]\times[i,j]\times[k,l].
\end{equation*}
We consider $X\times X\times X$ endowed with the product order. Note that $S$ and $T$ are the closed intervals $[(g,i,j),(h,k,l)]$ and $[(a,c,e),(b,d,f)]$ in $X\times X\times X$. Clearly $S\subseteq T$ if and only if
\begin{equation}\label{inclusiones}
[g,h]\subseteq[a,b],\quad [i,j]\subseteq[c,d]\quad\text{and}\quad [k,l]\subseteq[e,f].
\end{equation}
Note also that
$$
\mathfrak{h}(T)= \mathfrak{h}([a,b])+\mathfrak{h}([c,d])+\mathfrak{h}([e,f]).
$$
For $S\subseteq T$ as above we define
$$
\LBE(S,T)\coloneqq \sum_{\substack{x\in [a,g] \\ y\in [h,b]}} \sum_{\substack{w\in [c,i] \\ z\in [j,d]}} \sum_{\substack{u\in [e,k] \\ v\in [l,f]}} \lambda_{a|b|c|d}^{{}^a\hf w | {}^a\hf z | x\hs^c | y\hs^c} \lambda_{x\hs^c|y\hs^c|e|f}^{{}^{x\hs^c}\hg u | {}^{x\hs^c}\hg v|g\hs^c\hs^e | h\hs^c\hs^e} \lambda_{{}^a\hf w|{}^a\hf z|{}^{x\hs^c}\hg u| {}^{x\hs^c}\hg v}^{{}^a\hf {}^c\hf k | {}^a\hf {}^c\hf l  | {}^a\hf i \hs^{{}^{a\hs^i}\hg e}|{}^a\hf j\hs^{{}^{a\hs^j} \hg e}}
$$
and
$$
\RBE(S,T) \coloneqq \sum_{\substack{x\in [a,g] \\ y\in [h,b]}} \sum_{\substack{w\in [c,i] \\ z\in [j,d]}} \sum_{\substack{u\in [e,k] \\ v\in [l,f]}} \lambda_{c|d|e|f}^{{}^c\hf u | {}^c\hf v | w\hs^e | z\hs^e}  \lambda_{a|b|{}^c\hf u|{}^c\hf v}^{{}^a\hf {}^c\hf k | {}^a\hf {}^c\hf l |  x\hl^{{}^c\hf u} | y\hl^{{}^c\hf u}} \lambda_{x\hl^{{}^c\hf u} | y\hl^{{}^c\hf u}|w\hs^e | z\hs^e}^{{}^{a\hl^{{}^i\hf e}}\hf {i\hs^e}| {}^{a\hl^{{}^j\hf e}}\hf {j\hs^e} | g\hs^c\hs^e | h\hs^c\hs^e}.
$$
In~\cite{GGV1}*{Proposition~4.3} the following result is proved:

\begin{proposition}\label{condicion para braided}
The map $r$ is a solution of the braid equation if and only if
\begin{equation}\label{eq braided}
\LBE(S,T) = \RBE(S,T)\quad \text{for all $S\subseteq T$.}
\end{equation}
\end{proposition}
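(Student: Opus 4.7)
The plan is to verify the equivalence by computing both sides of the braid equation $r_{12}\circ r_{23}\circ r_{12} = r_{23}\circ r_{12}\circ r_{23}$ applied to an arbitrary basis vector $(a,b)\ot(c,d)\ot(e,f)$ of $D\ot D\ot D$, and then reading off the coefficient of an arbitrary basis vector $(g,h)\ot(i,j)\ot(k,l)$ on each side. Since these basis vectors are linearly independent, the braid equation is equivalent to the equality of all such coefficients.

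First I would note that, by condition 4) recalled in Section~\ref{Seccion Preliminares}, the image of $r((a',b')\ot(c',d'))$ is supported on tensors of pairs $(e',f')\ot(g',h')$ with ${}^{a'}\hf c'\le e'\le f'\le {}^{a'}\hf d'$ and $a'\hs^{c'}\le g'\le h'\le b'\hs^{c'}$. Because ${}^{a'}\hf(-)$ and $(-)\hs^{c'}$ are poset automorphisms, these indices are naturally reparametrized as $e'={}^{a'}\hf w'$, $f'={}^{a'}\hf z'$ with $w'\le z'$ in $[c',d']$, and $g'=x'\hs^{c'}$, $h'=y'\hs^{c'}$ with $x'\le y'$ in $[a',b']$. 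I would then apply $r$ three times in succession on the appropriate tensor factors, using this reparametrization at each stage. Each step produces one of the three factors $\lambda_{\cdot|\cdot|\cdot|\cdot}^{\cdot|\cdot|\cdot|\cdot}$ appearing in $\LBE(S,T)$; the sum indices $x,y,w,z,u,v$ precisely encode the intermediate choices, and the exponents with iterated left/right actions (like ${}^{x\hs^c}\hg u$ or ${}^a\hf {}^c\hf k$) arise by propagating the corresponding index through the subsequent applications of $r_0$, which in turn come from the constraints in condition 4) at each stage. Collecting the coefficient of $(g,h)\ot(i,j)\ot(k,l)$ yields exactly $\LBE(S,T)$, and an entirely analogous computation on the right-hand side yields $\RBE(S,T)$. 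Moreover, condition 4) forces this coefficient to vanish unless $S\subseteq T$, so the equivalence need only be checked in that case.

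The main obstacle I expect is bookkeeping: the subtle point is that each successive application of $r$ carries its input indices through the non-degenerate maps ${}^\bullet\!(-)$ and $(-)\hs^\bullet$ determined by the preceding tensor factor, so that the same underlying variable appears in the later factors with different action superscripts. Careful matching of these superscripts with the ones written in the definitions of $\LBE(S,T)$ and $\RBE(S,T)$ — in particular verifying that terms like ${}^{a\hs^i}\hg e$ and ${}^{x\hs^c}\hg v$ appear with the correct arguments, and identifying which interval variable plays which role at each step — is the delicate part; once this dictionary is set up, the rest of the computation is formal expansion and comparison of coefficients.
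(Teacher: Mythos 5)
Your proposal is correct and follows essentially the same route as the paper (which defers to the proof of \cite{GGV1}*{Proposition~4.3}): expand both triple compositions on a basis tensor $\mathfrak{T}$, use condition~4) and the reparametrization through the poset automorphisms ${}^\bullet\!(-)$ and $(-)\hs^\bullet$ to write the results as $\sum_{S\subseteq T}\LBE(S,T)\psi(S)$ and $\sum_{S\subseteq T}\RBE(S,T)\psi(S)$, and compare coefficients of the (injectively parametrized) output basis vectors $\psi(S)$. No further comment is needed.
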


For each $S\coloneqq [(g,i,j),(h,k,l)]$, we set
$$
\psi(S)\coloneqq ({}^g\hf {}^i\hf k , {}^g\hf {}^i\hf l) \ot ( {}^{g\hl^{{}^i\hf k}}\hf {i\hs^k}, {}^{g\hl^{{}^i\hf k}}\hf {j\hs^k}) \ot (g\hs^i\hs^k ,h\hs^i\hs^k).
$$
Let $\mathfrak{T}\coloneqq (a,b)\otimes(c,d)\otimes(e,f)$. A direct computation shows that
$$
(r\ot D)\circ (D\ot r)\circ (r\ot D) (\mathfrak{T})=\sum_{S\subseteq T}\LBE(S,T) \psi(S)
$$
and
$$
(D \ot r) \circ (r\ot D)\circ (D\ot r)(\mathfrak{T})=\sum_{S\subseteq T}\RBE(S,T) \psi(S)
$$
(see the proof of~\cite{GGV1}*{Proposition~4.3}). Since $r\ot D$ and $D\ot r$ are coalgebra mor\-phisms, this implies that
\begin{equation}\label{Suma de LBE es cero}
\delta_{ab}\delta_{cd}\delta_{ef}=(\epsilon \otimes \epsilon \otimes \epsilon) (\mathfrak{T}) = \sum_{\substack{S\subseteq T\\ \mathfrak{h}(S)=0}} \LBE(S,T),
\end{equation}
and similarly
\begin{equation}\label{Suma de RBE es cero}
\delta_{ab}\delta_{cd}\delta_{ef}=\sum_{\substack{S\subseteq T\\ \mathfrak{h}(S)=0}} \RBE(S,T).
\end{equation}
Assume $S\subseteq T$ and let $(p,q,s)\in S$. We define the splitting of the inclusion $S\subseteq T$ at $(p,q,s)$ as the pair $(S_1\subseteq T_1,S_2\subseteq T_2)$, where
\begin{align*}
  &S_1\coloneqq [(g,i,j),(p,q,s)], &&T_1\coloneqq [(a,c,e),(p,q,s)], \\
  &S_2\coloneqq [(p,q,s),(h,k,l)], &&T_2\coloneqq [(p,q,s),(b,d,f)].
\end{align*}

\begin{theorem}\label{igualdad a izquierda y derecha} The following equalities hold:
\begin{align*}
& \LBE(S,T)=\LBE(S_1,T_1)\LBE(S_2,T_2)
\shortintertext{and}
& \RBE(S,T)=\RBE(S_1,T_1)\RBE(S_2,T_2).
\end{align*}
\end{theorem}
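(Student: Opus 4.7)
The plan is to prove both identities by directly applying the splitting relation \eqref{split} separately to each of the three $\lambda$-factors in the definitions of $\LBE(S,T)$ and $\RBE(S,T)$, choosing as intermediate splitting elements the (suitably decorated) components of $(p,q,s)$.

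First I would verify that the summation domains match. For $\LBE(S_1,T_1)$, with $T_1 = [a,p]\times[c,q]\times[e,s]$ and $S_1$ having upper endpoint $(p,q,s)$, three of the six summation variables collapse to the singletons $\{p\}$, $\{q\}$, $\{s\}$, while the remaining three range over $[a,g]$, $[c,i]$, $[e,k]$. Symmetrically, in $\LBE(S_2,T_2)$ the non-trivial variables range over $[h,b]$, $[j,d]$, $[l,f]$. Thus the product $\LBE(S_1,T_1)\LBE(S_2,T_2)$, as an iterated sum, is indexed by precisely the same six-fold set that indexes $\LBE(S,T)$.

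Next, for each fixed sextuple $(x,y,w,z,u,v)$, I would apply \eqref{split} to the three $\lambda$-factors of $\LBE(S,T)$: to the first factor $\lambda_{a|b|c|d}^{{}^a\hf w | {}^a\hf z | x\hs^c | y\hs^c}$ at the target intermediate elements $({}^a q,\, p\hs^c)$, and analogously to the second and third factors at the intermediate elements determined by $(p,s)$ and $(q,s)$. The hypotheses of \eqref{split} are met because the chains of inequalities $x\le g\le p\le h\le y$, $w\le i\le q\le j\le z$ and $u\le k\le s\le l\le v$, combined with the order-preservation of the decoration maps ${}^{(\cdot)}(\cdot)$ and $(\cdot)\hs^{(\cdot)}$ (axiom~2 of the preliminaries), place the required elements in the relevant target intervals. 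Each application of \eqref{split} replaces one $\lambda$-factor with a product of two; regrouping the resulting six factors, the three ``lower'' factors assemble into the product of three $\lambda$-factors defining $\LBE(S_1,T_1)$, while the three ``upper'' factors assemble into $\LBE(S_2,T_2)$. The identity for $\RBE$ is obtained by the symmetric reasoning on its three $\lambda$-factors at the analogous intermediate points.

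The main obstacle will be the bookkeeping of decorated indices: the nested superscripts and subscripts involving iterated applications of ${}^{(\cdot)}(\cdot)$ and $(\cdot)\hs^{(\cdot)}$ proliferate under the substitutions dictated by \eqref{split}, and verifying that the six resulting factors agree term-by-term with those appearing in $\LBE(S_1,T_1)\LBE(S_2,T_2)$ requires careful use of axiom~3 (constancy of the maps on connected components, which identifies expressions such as ${}^a q$ and ${}^g q$, as $a\le g$ forces them to lie in the same component) and axiom~4 (the constraints on non-vanishing $\lambda$'s, ensuring that the decorations produced by \eqref{split} agree with the ones already present in the definitions of $\LBE(S_i,T_i)$ and $\RBE(S_i,T_i)$).
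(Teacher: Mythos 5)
Your proposal follows essentially the same route as the paper's proof: the paper likewise splits each of the three $\lambda$-factors of $\LBE(S,T)$ (resp.\ $\RBE(S,T)$) at the intermediate elements determined by $(p,q,s)$ using the splitting relation~\eqref{split} (via \cite{GGV1}*{Proposition~2.10}), and then uses the constancy of the decoration maps on connected components (\cite{GGV1}*{Corollary~2.5}, together with \cite{GGV1}*{Remark~4.2}) to rewrite the resulting indices so that the six factors regroup into $\LBE(S_1,T_1)\LBE(S_2,T_2)$, with the same matching of summation domains. The argument is correct and matches the paper's in all essentials.
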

\begin{proof}
Since by definition we have
\begin{align*}
&\LBE(S,T)=\sum_{\substack{x\in [a,g] \\ y\in [h,b]}} \sum_{\substack{w\in [c,i] \\ z\in [j,d]}}  \sum_{\substack{u\in [e,k] \\ v\in [l,f]}} \lambda_{a|b|c|d}^{{}^a\hf w | {}^a\hf z | x\hs^c | y\hs^c} \lambda_{x\hs^c|y\hs^c|e|f}^{{}^{x\hs^c}\hg u | {}^{x\hs^c}\hg v|g\hs^c\hs^e | h\hs^c\hs^e} \lambda_{{}^a\hf w|{}^a\hf z|{}^{x\hs^c}\hg u| {}^{x\hs^c}\hg v}^{{}^a\hf {}^c\hf k | {}^a\hf {}^c\hf l  | {}^a\hf i\hs^{{}^{a\hs^i}\hg e}|{}^a\hf j\hs^{{}^{a\hs^j}\hg e}},\\
&\LBE(S_1,T_1)=\sum_{x\in [a,g]} \sum_{w\in [c,i]} \sum_{u\in [e,k]} \lambda_{a|p|c|q}^{{}^a\hf w | {}^a\hf q | x\hs^c | p\hs^c} \lambda_{x\hs^c|p\hs^c|e|s}^{{}^{x\hs^c}\hg u | {}^{x\hs^c}\hg s  | g\hs^c\hs^e | p\hs^c\hs^e} \lambda_{{}^a\hf w|{}^a\hf q|{}^{x\hs^c}\hg u| {}^{x\hs^c}\hg s}^{{}^a\hf {}^c\hf k | {}^a\hf {}^c\hf s  | {}^a\hf i\hs^{{}^{a\hs^i}\hg e}|{}^a\hf q\hs^{{}^{a\hs^q}\hg e}}\\
\shortintertext{and}
&\LBE(S_2,T_2)=\sum_{y\in [h,b]} \sum_{z\in [j,d]} \sum_{v\in [l,f]} \lambda_{p|b|q|d}^{{}^p\hf q | {}^p\hf z | p\hs^q | y\hs^q} \lambda_{p\hs^q|y\hs^q|s|f}^{{}^{p\hs^q}\hg s | {}^{p\hs^q}\hg v | p\hs^q\hs^s | h\hs^q\hs^s} \lambda_{{}^p\hf q | {}^p\hf z|{}^{p\hs^q}\hg s| {}^{p\hs^q}\hg v}^{{}^p\hf {}^q\hf s | {}^p\hf {}^q\hf l  | {}^p\hf q\hs^{{}^{p\hs^q}\hg s}|{}^p\hf j\hs^{{}^{p\hs^j}\hg s}},
\end{align*}
in order to prove the first equality, it suffices to note that, by~\cite{GGV1}*{Proposition~2.10}, \cite{GGV1}*{Corollary~2.5} and \cite{GGV1}*{Remark~4.2}, the equalities
\begin{align*}
& \lambda_{a|b|c|d}^{{}^a\hf w | {}^a\hf z | x\hs^c | y\hs^c} =\lambda_{a|p|c|q}^{{}^a\hf w | {}^a\hf q | x\hs^c | p\hs^c}  \lambda_{p|b|q|d}^{{}^a\hf q | {}^a\hf z | p\hs^c | y\hs^c}= \lambda_{a|p|c|q}^{{}^a\!w | {}^a\!q | x\hs^c | p\hs^c}  \lambda_{p|b|q|d}^{{}^p\!q | {}^p\!z | p\hs^q | y\hs^q}, \\
&  \lambda_{x\hs^c|y\hs^c|e|f}^{{}^{x\hs^c}\hg u | {}^{x\hs^c}\hg v  | g\hs^c\hs^e | h\hs^c\hs^e}=  \lambda_{x\hs^c|p\hs^c|e|s}^{{}^{x\hs^c}\hg u | {}^{x\hs^c}\hg s  | g\hs^c\hs^e | p\hs^c\hs^e}  \lambda_{p\hs^c|y\hs^c|s|f}^{{}^{x\hs^c}\hg s | {}^{x\hs^c}\hg v  | p\hs^c\hs^e | h\hs^c\hs^e} =  \lambda_{x\hs^c|p\hs^c|e|s}^{{}^{x\hs^c}\hg u | {}^{x\hs^c}\hg s  | g\hs^c\hs^e | p\hs^c\hs^e}  \lambda_{p\hs^q|y\hs^q|s|f}^{{}^{p\hs^q}\hg s | {}^{p\hs^q}\hg v | p\hs^q\hs^s | h\hs^q\hs^s}
\shortintertext{and}
&  \lambda_{{}^a\hf w|{}^a\hf z|{}^{x\hs^c}\hg u| {}^{x\hs^c}\hg v}^{{}^a\hf {}^c\hf k | {}^a\hf {}^c\hf l  | {}^a\hf i\hs^{{}^{a\hs^i}\hg e}|{}^a\hf j\hs^{{}^{a\hs^j}\hg e}}= \lambda_{{}^a\hf w|{}^a\hf q|{}^{x\hs^c}\hg u| {}^{x\hs^c}\hg s}^{{}^a\hf {}^c\hf k | {}^a\hf {}^c\hf s  | {}^a\hf i\hs^{{}^{a\hs^i}\hg e}|{}^a\hf q\hs^{{}^{a\hs^q}\hg e}} \lambda_{{}^a\hf q|{}^a\hf z|{}^{x\hs^c}\hg s| {}^{x\hs^c}\hg v}^{{}^a\hf {}^c\hf s | {}^a\hf {}^c\hf l  | {}^a\hf q\hs^{{}^{a\hs^q}\hg e}|{}^a\hf j\hs^{{}^{a\hs^j}\hg e}}= \lambda_{{}^a\hf w|{}^a\hf q|{}^{x\hs^c}\hg u| {}^{x\hs^c}\hg s}^{{}^a\hf {}^c\hf k | {}^a\hf {}^c\hf s  | {}^a\hf i\hs^{{}^{a\hs^i}\hg e}|{}^a\hf q\hs^{{}^{a\hs^q}\hg e}} \lambda_{{}^p\hf q|{}^p\hf z|{}^{p\hs^q}\hg s| {}^{p\hs^q}\hg v}^{{}^p\hf {}^q\hf s | {}^p\hf {}^q\hf l  | {}^p\hf q\hs^{{}^{p\hs^q}\hg s}|{}^p\hf j\hs^{{}^{p\hs^j}\hg s}}
\end{align*}
hold. We leave the proof of the second equality to the reader.
%
%
%
%
\end{proof}

\begin{definition}\label{definicion extremal}
We say that an inclusion of intervals $[\alpha,\beta]\subseteq [\gamma,\delta]$ with $\gamma<\delta$ is \emph{lower extremal} if $\alpha=\beta=\gamma$ and that is \emph{upper extremal} if $\alpha=\beta=\delta$.
\end{definition}

\begin{remark}
Note that $S\subseteq T$ is extremal if either the three inclusions in~\eqref{inclusiones} are lower extremal or the three inclusions are upper extremal.
\end{remark}

\begin{corollary}\label{extremales y altura 1 son suficientes}
The map $r$ is a solution of the braid equation if and only if iden\-tity~\eqref{eq braided} hold for all $S\subseteq T$ lower extremal with $\mathfrak{h}(T) \ge 1$ or $S=T$ and $\mathfrak{h}(T) = 1$.
\end{corollary}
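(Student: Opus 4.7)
The ``only if'' direction is immediate from Proposition~\ref{condicion para braided}. For the converse, my plan is to show by strong induction on $\mathfrak{h}(T)$ that, under the two restricted families of equations in the statement, $\LBE(S,T)=\RBE(S,T)$ for every $S\subseteq T$. The two main tools are Theorem~\ref{igualdad a izquierda y derecha} (which factorizes both $\LBE(S,T)$ and $\RBE(S,T)$ along a split at any $(p,q,s)\in S$) and the summation identities~\eqref{Suma de LBE es cero}--\eqref{Suma de RBE es cero}.

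For the base case $\mathfrak{h}(T)=0$, $T$ is a single point and $S=T$, so the normalization $\lambda_{x|x|y|y}^{{}^x\hf y|{}^x\hf y|x^y|y^x}=1$ recalled at the end of Section~\ref{Seccion Preliminares} yields $\LBE(S,T)=\RBE(S,T)=1$. For $\mathfrak{h}(T)=1$ the interval has only its two corners $P_0=(a,c,e)$ and $P_1=(b,d,f)$, so $S\in\{\{P_0\},\{P_1\},T\}$. The cases $S=\{P_0\}$ and $S=T$ are the two hypotheses, while $S=\{P_1\}$ is then forced by subtracting~\eqref{Suma de LBE es cero} from~\eqref{Suma de RBE es cero}: once the $P_0$-summand coincides on the two sides, so does the $P_1$-summand.

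For the inductive step ($\mathfrak{h}(T)\ge 2$) I distinguish cases according to the position of $S$ inside $T$. \textbf{(a)} If $S=\{(a,c,e)\}$, apply the lower extremal hypothesis directly. \textbf{(b)} If $S\subsetneq T$, $S\ne\{(a,c,e)\}$ and $(h,k,l)\ne(b,d,f)$, split at $(h,k,l)\in S$: the factor $S\subseteq T_1=[(a,c,e),(h,k,l)]$ has $\mathfrak{h}(T_1)<\mathfrak{h}(T)$ and is handled by induction, while the factor $\{(h,k,l)\}\subseteq T_2=[(h,k,l),(b,d,f)]$ is lower extremal with $\mathfrak{h}(T_2)\ge 1$ and is handled by hypothesis. \textbf{(c)} If $S\subsetneq T$, $(h,k,l)=(b,d,f)$ and $S$ is not a single point, then necessarily $(a,c,e)<(g,i,j)<(b,d,f)$; splitting at $(g,i,j)\in S$ yields $\{(g,i,j)\}\subseteq T_1=[(a,c,e),(g,i,j)]$ and $S\subseteq T_2=S$, both ambients having height strictly smaller than $\mathfrak{h}(T)$, so induction applies. \textbf{(d)} If $S=T$, since $\mathfrak{h}(T)\ge 2$ I may choose an interior point $(p,q,s)\in T$ different from both corners, split there, and invoke induction on the two smaller ambients.

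The only remaining---and genuinely delicate---case is $S=\{(b,d,f)\}$: every split at this point is trivial, so Theorem~\ref{igualdad a izquierda y derecha} alone gives no reduction, and the lower extremal hypothesis does not reach the upper corner. I plan to handle it by first observing that the previous steps establish $\LBE(\{P\},T)=\RBE(\{P\},T)$ for every $P\in T$ other than $(b,d,f)$ (the lower corner is covered by (a); each interior $P$ falls into (b) with $S=\{P\}$, whose non-trivial split at $P$ reduces to strictly smaller ambients), and then applying~\eqref{Suma de LBE es cero}--\eqref{Suma de RBE es cero} to $T$: since both sums equal $\delta_{ab}\delta_{cd}\delta_{ef}$ and all summands but possibly the $(b,d,f)$-one already coincide, the equality at $(b,d,f)$ is forced. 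This is in fact the only place in the whole argument where the summation identities are essential, which is precisely why the two restricted families in the statement suffice.
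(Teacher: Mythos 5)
Your proof is correct and follows essentially the same strategy as the paper's: induction on $\mathfrak{h}(T)$, with the lower corner handled by the hypothesis, all intermediate inclusions reduced by splitting via Theorem~\ref{igualdad a izquierda y derecha}, and the upper corner forced by the counit identities~\eqref{Suma de LBE es cero}--\eqref{Suma de RBE es cero}. Your more explicit choice of splitting points and your observation that the upper-corner singleton already requires the summation argument when $\mathfrak{h}(T)=1$ (a point the paper glosses over with ``for $n=1$ it is true by hypothesis'') are welcome refinements of detail, not a different method.
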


\begin{proof} By Proposition~\ref{condicion para braided} we must show that $\LBE(S,T) = \RBE(S,T)$ for all $S\subseteq T$. When $\mathfrak{h}(T) = 0$ this is true since $r_0 \colon X \times X \to X\times X$ is a solution of the braid equation, while for $n=1$ it is true by hypothesis. Assume now that identity~\eqref{eq braided} hold for $S\subseteq T$ with $\mathfrak{h}(T)\le n$ for some $n\ge 1$ and set $T=[(a,c,e),(b,d,f)]$ with $\mathfrak{h}(T)=n+1$. Let $\underline{S}\coloneqq[(a,c,e),(a,c,e)]$ and $\overline{S}\coloneqq [(b,d,f),(b,d,f)]$. By the hypothesis, we know that~\eqref{eq braided} is satisfied for $S = \underline{S}$. Moreover, for $S\subseteq T$ with $S\notin\{\underline{S},\overline{S}\}$ there exists a splitting $(S_1\subseteq T_1,S_2\subseteq T_2)$ of $S\subseteq T$ with $\mathfrak{h}(T_1),\mathfrak{h}(T_2)<\mathfrak{h}(T)$. Hence, in this case the result follows by induction on $\mathfrak{h}(T)$, using Theorem~\ref{igualdad a izquierda y derecha}. Finally we have
\begin{align*}
\LBE(\overline{S},T) & = \sum_{\substack{S \subseteq T \\ \mathfrak{h}(S)=0}} \LBE(S,T) - \sum_{\substack{S\subseteq T,\, S\ne \overline{S}\\ \mathfrak{h}(S)=0}} \LBE(S,T)\\
& = \sum_{\substack{S \subseteq T \\ \mathfrak{h}(S)=0}} \RBE(S,T) - \sum_{\substack{S\subseteq T,\, S\ne \overline{S}\\ \mathfrak{h}(S)=0}}  \RBE(S,T)\\
&= \RBE(\overline{S},T),
\end{align*}
where in the second equality we have used equalities~\eqref{Suma de LBE es cero} and~\eqref{Suma de RBE es cero}.
\end{proof}

\subsection{Braid equation for lower extremal inclusions in height one~orders.}
Next we analyze exhaustively the meaning of equalities~\eqref{eq braided} when the order has height one, the sum of the lengths of the intervals $[a,b]$, $[c,d]$ and $[e,f]$ is greater than~$1$ and the inclusions are lower extremal:

\begin{enumerate}[itemsep=0.7ex, topsep=0.7ex, label=\arabic*)]

\item When $g=h=a\prec b$, $i=j=c\prec d$ and $e=f=k=l$, then~\eqref{eq braided} reduces~to
\begin{equation}\label{Caso 110}
\begin{split}
\qquad\sum_{\substack{y\in [a,b] \\ z\in [c,d]}} & \lambda_{a|b|c|d}^{{}^a\hf c | {}^a\hf z | a\hs^c | y\hs^c} \lambda_{a\hs^c|y\hs^c|e|e}^{{}^{a\hs^c}\hg e | {}^{a\hs^c}\hg e  | a\hs^c\hs^e | a\hs^c\hs^e} \lambda_{{}^a\hf c|{}^a\hf z|{}^{a\hs^c}\hg e| {}^{a\hs^c}\hg e}^{{}^a\hf {}^c\hf e | {}^a\hf {}^c\hf e| {}^a\hf c\hs^{{}^{a\hs^c}\hg e}|{}^a\hf c\hs^{{}^{a\hs^c}\hg e}}\\
&= \sum_{\substack{y\in [a,b] \\ z\in [c,d]}} \lambda_{c|d|e|e}^{{}^c\hf e | {}^c\hf e | c\hs^e | z\hs^e}  \lambda_{a|b|{}^c\hf e|{}^c\hf e}^{{}^a\hf {}^c\hf e | {}^a\hf {}^c\hf e |  a\hl^{{}^c\hf e} | y\hl^{{}^c\hf e}} \lambda_{a\hl^{{}^c\hf e} | y\hl^{{}^c\hf e}|c\hs^e | z\hs^e}^{{}^{a\hl^{{}^c\hf e}}\hf {c\hs^e}| {}^{a\hl^{{}^c\hf e}}\hf {c\hs^e} | a\hs^c\hs^e | a\hs^c\hs^e}.
\end{split}
\end{equation}

\item  When $g=h=a\prec b$, $i=j=c=d$ and $k=l=e\prec f$, then~\eqref{eq braided} reduces~to
\begin{equation}
\begin{split}\label{Caso 101}
\qquad\sum_{\substack{y\in [a,b] \\ v\in [e,f]}} & \lambda_{a|b|c|c}^{{}^a\hf c | {}^a\hf c | a\hs^c | y\hs^c} \lambda_{a\hs^c|y\hs^c| e|f}^{{}^{a\hs^c}\hg e | {}^{a\hs^c}\hg v| a\hs^c\hs^e | a\hs^c\hs^e} \lambda_{{}^a\hf c|{}^a\hf c|{}^{a\hs^c}\hg e| {}^{a\hs^c}\hg v}^{{}^a\hf {}^c\hf e | {}^a\hf {}^c\hf e  | {}^a\hf c\hs^{{}^{a\hs^c}\hg e}|{}^a\hf c\hs^{{}^{a\hs^c}\hg e}}\\
&=  \sum_{\substack{y\in [a,b] \\ v\in [e,f]}} \lambda_{c|c|e|f}^{{}^c\hf e | {}^c\hf v | c\hs^e | c\hs^e} \lambda_{a|b|{}^c\hf e|{}^c\hf v}^{{}^a\hf {}^c\hf e | {}^a\hf {}^c\hf e |  a\hl^{{}^c\hf e} | y\hl^{{}^c\hf e}} \lambda_{a\hl^{{}^c\hf e} | y\hl^{{}^c\hf e}|c\hs^e | c\hs^e}^{{}^{a\hl^{{}^c\hf e}}\hf {c\hs^e}| {}^{a\hl^{{}^c\hf e}}\hf {c\hs^e} | a\hs^c\hs^e | a\hs^c\hs^e}.
\end{split}
\end{equation}

\item  When $g=h=a=b$, $i=j=c\prec d$ and $k=l=e\prec f$, then~\eqref{eq braided} reduces~to
\begin{equation}
\begin{split}\label{Caso 011}
\qquad\sum_{\substack{z\in [c,d] \\ v\in [e,f]}}& \lambda_{a|a|c|d}^{{}^a\hf c | {}^a\hf z | a\hs^c | a\hs^c} \lambda_{a\hs^c|a\hs^c|e|f}^{{}^{a\hs^c}\hg e | {}^{a\hs^c}\hg v|a\hs^c\hs^e | a\hs^c\hs^e} \lambda_{{}^a\hf c|{}^a\hf z|{}^{a\hs^c}\hg e| {}^{a\hs^c}\hg v}^{{}^a\hf {}^c\hf e | {}^a\hf {}^c\hf e  | {}^a\hf c \hs^{{}^{a\hs^c}\hg e}|{}^a\hf c\hs^{{}^{a\hs^c}\hg e}}\\
&= \sum_{\substack{z\in [c,d] \\ v\in [e,f]}} \lambda_{c|d|e|f}^{{}^c\hf e | {}^c\hf v | c\hs^e | z\hs^e}  \lambda_{a|a|{}^c\hf e|{}^c\hf v}^{{}^a\hf {}^c\hf e | {}^a\hf {}^c\hf e |  a\hl^{{}^c\hf e} | a\hl^{{}^c\hf e}} \lambda_{a\hl^{{}^c\hf e} | a\hl^{{}^c\hf e}|c\hs^e | z\hs^e}^{{}^{a\hl^{{}^c\hf e}}\hf {c\hs^e}| {}^{a\hl^{{}^c\hf e}}\hf {c\hs^e} | a\hs^c\hs^e | a\hs^c\hs^e}.
\end{split}
\end{equation}

\item  When $g=h=a\prec b$, $i=j=c\prec d$ and $k=l=e\prec f$, then~\eqref{eq braided} reduces~to
\begin{equation}
\begin{split}\label{Caso 111}
\qquad\sum_{\substack{ y\in [a,b]\\ z\in [c,d]\\ v\in[e,f]}} &  \lambda_{a|b|c|d}^{{}^a\hf c | {}^a\hf z | a\hs^c | y\hs^c} \lambda_{a\hs^c|y\hs^c|e|f}^{{}^{a\hs^c}\hg e | {}^{a\hs^c}\hg v|a\hs^c\hs^e | a\hs^c\hs^e} \lambda_{{}^a\hf c|{}^a\hf z|{}^{a\hs^c}\hg e| {}^{a\hs^c}\hg v}^{{}^a\hf {}^c\hf e | {}^a\hf {}^c\hf e  | {}^a\hf c \hs^{{}^{a\hs^c}\hg e}|{}^a\hf c\hs^{{}^{a\hs^c} \hg e}}\\
&= \sum_{\substack{ y\in [a,b]\\ z\in [c,d]\\ v\in[e,f]}} \lambda_{c|d|e|f}^{{}^c\hf e | {}^c\hf v | c\hs^e | z\hs^e}  \lambda_{a|b|{}^c\hf e|{}^c\hf v}^{{}^a\hf {}^c\hf e | {}^a\hf {}^c\hf e |  a\hl^{{}^c\hf e} | y\hl^{{}^c\hf e}} \lambda_{a\hl^{{}^c\hf e} | y\hl^{{}^c\hf e}|c\hs^e | z\hs^e}^{{}^{a\hl^{{}^c\hf e}}\hf {c\hs^e}| {}^{a\hl^{{}^c\hf e}}\hf {c\hs^e} | a\hs^c\hs^e | a\hs^c\hs^e}.
\end{split}
\end{equation}
\end{enumerate}

\begin{theorem}\label{EYB para longitud mayor que uno}
Assume that $(X,\le)$ has height one and that equality~\eqref{eq braided} hold for all $S\subseteq T$ with $\mathfrak{h}(T) = 1$, and $S\subseteq T$ lower extremal or $S=T$. Then $r$ is a solution of the braid equation if and only if
\begin{itemize}[itemsep=0.7ex, topsep=0.7ex]

\item[-] for all $a\prec b$, $c\prec d$ and $e\in X$, the equality~\eqref{Caso 110} is satisfied,

\item[-] for all $a\prec b$, $c\in X$ and $e\prec f$, the equality~\eqref{Caso 101} is satisfied,

\item[-] for all $a\in X$, $c\prec d$ and $e\prec f$, the equality~\eqref{Caso 011} is satisfied,

\item[-] for all $a\prec b$, $c\prec d$ and $e\prec f$, the equality~\eqref{Caso 111} is satisfied.

\end{itemize}
\end{theorem}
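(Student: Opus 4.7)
The plan is to invoke Corollary~\ref{extremales y altura 1 son suficientes} and then enumerate the only cases the hypothesis does not already handle. That corollary asserts that $r$ is a solution of the braid equation if and only if $\LBE(S,T)=\RBE(S,T)$ holds for every lower extremal inclusion $S\subseteq T$ with $\mathfrak{h}(T)\ge 1$, together with the diagonal inclusions $S=T$ with $\mathfrak{h}(T)=1$. The hypothesis of the theorem already covers both families in height one, so the task reduces to verifying~\eqref{eq braided} for lower extremal inclusions with $\mathfrak{h}(T)\ge 2$.

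Since $X$ has height one, each of $[a,b]$, $[c,d]$, $[e,f]$ has height $0$ or $1$, hence $\mathfrak{h}(T)\le 3$. When $\mathfrak{h}(T)=2$ exactly one of the three component intervals is a singleton, giving three sub-cases according to whether the singleton is $[e,f]$, $[c,d]$ or $[a,b]$; lower extremality forces $g=h=a$, $i=j=c$, $k=l=e$ in each non-singleton slot, so~\eqref{eq braided} becomes respectively~\eqref{Caso 110}, \eqref{Caso 101} and~\eqref{Caso 011}. When $\mathfrak{h}(T)=3$ all three intervals have height one, lower extremality again gives $g=h=a$, $i=j=c$, $k=l=e$, and~\eqref{eq braided} becomes~\eqref{Caso 111}. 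These four families exhaust the remaining cases.

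The only real work is notational bookkeeping: in each sub-case one substitutes the collapsed relations $g=h=a$, $i=j=c$, $k=l=e$ (as appropriate) into the sums defining $\LBE(S,T)$ and $\RBE(S,T)$, and observes that the summation variables whose index set shrinks to a point disappear, leaving exactly the two-variable or three-variable sums displayed in the four equations. Since this case-by-case reduction is precisely what the paragraph preceding the theorem already carries out, no additional computation is required; the conclusion follows at once from Corollary~\ref{extremales y altura 1 son suficientes}.
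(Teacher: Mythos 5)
Your proposal is correct and follows exactly the paper's route: the paper's proof is literally the single line ``By Corollary~\ref{extremales y altura 1 son suficientes}'', with the enumeration of the four remaining lower extremal cases~\eqref{Caso 110}--\eqref{Caso 111} carried out in the paragraph immediately preceding the theorem, just as you describe.
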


\begin{proof} By Corollary~\ref{extremales y altura 1 son suficientes}.
\end{proof}

\section{Conditions for solutions with set-type square}
\label{seccion set type square}

Let $r \colon D\ot D \longrightarrow D\ot D$ be a non-degenerate coalgebra automorphism and let $r_0\colon X\times X \longrightarrow X \times X$ be the map induced by $r$. In the sequel we assume that $r_0$ is a non-degenerate solution of the set-theoretical braid equation and we set $r^2\coloneqq r\circ r$ and $Y\otimes Y\coloneqq \{a\otimes b : a,b\in Y \}$.

\begin{definition}\label{def de set type square}
We say that $r$ has \emph{set-type square} if $r^2(Y\ot_k Y)\subseteq Y\ot_k Y$.
\end{definition}

\begin{remark}\label{r square} By~\cite{GGV1}*{Remark~3.1}, the map $r$ has set-type square if and only if
\begin{equation}\label{r square condicion}
r^2((a,b)\otimes (c,d))=\bigl({}^{{}^{a}\hf c}\hspace{1pt} a^{\hf c},{}^{{}^{a}\hf c}\hspace{1pt} b^{\hf c}\bigr) \ot \bigl({}^a\hf c \hspace{2pt}  {}^{a\hs^c}, {}^a\hf d \hspace{2pt} {}^{a\hs^c}\bigr),
\end{equation}
for all $(a,b),(c,d)\in Y$. Consequently, if $r$ has set-type square, then $r$ permutes the elements of $Y\ot_k Y$.
\end{remark}

\begin{remark}\label{r square para altura 0} When $a=b$ and $c=d$, then equality~\eqref{r square condicion} holds since $r_0$ is a solution of the set-theoretical braid equation.
\end{remark}

\begin{definition}\label{def de set type square up to height 1}
We say that $r$ has \emph{set-type square up to height~$1$} if equality~\eqref{r square condicion} holds for all the $(a,b), (c,d)\in Y$ with $\mathfrak{h}([a,b])+\mathfrak{h}([c,d]) = 1$.
\end{definition}

Let $(a,b),(c,d),(e,f),(g,h)\in Y$ and let
\begin{equation}\label{S y T otro}
T\coloneqq [a,b]\times[c,d]\quad\text{and}\quad S\coloneqq [e,f]\times[g,h].
\end{equation}
We consider $X\times X$ endowed with the product order. Note that $S$ and $T$ are the closed intervals $[(e,g),(f,h)]$ and $[(a,c),(b,d)]$ in $X\!\times\! X$. Clearly $S\!\subseteq T$ if and only~if
\begin{equation*}
[e,f]\subseteq [a,b]\quad\text{and}\quad [g,h]\subseteq[c,d].
\end{equation*}
Note also that $\mathfrak{h}(T)= \mathfrak{h}([a,b])+\mathfrak{h}([c,d])$. For $S\subseteq T$ as above we define
\begin{align*}
&\LSQ(S,T)\coloneqq\sum_{\substack{x\in [a,e] \\ y\in [f,b]}}\sum_{\substack{w\in [c,g] \\ z\in[h,d]}} \lambda_{a|b|c|d}^{{}^a\hf w | {}^a\hf z | x\hs^c | y\hs^c} \lambda^{{}^{{}^{a}\hf w}\hspace{0pt}  e^{\hf c}|{}^{{}^{a}\hf w}\hspace{0pt} f^{\hf c}|{}^a\hf g \hspace{1pt}  {}^{x\hs^c}|{}^a\hf h \hspace{1pt} {}^{x\hs^c}}_{{}^a\hf w | {}^a\hf z | x\hs^c | y\hs^c}
\shortintertext{and}
&\RSQ(S,T) \coloneqq  \delta_{ae}\delta_{bf}\delta_{cg}\delta_{dh}.
\end{align*}
Let $\mathfrak{V}\coloneqq (a,b)\otimes(c,d)$. Applying twice~\cite{GGV1}*{Corollary~2.9}, we obtain
\begin{equation*}
r^2(\mathfrak{V}) =\sum_{\substack{[x,y]\subseteq [a,b] \\ [w,z]\subseteq [c,d]}}\sum_{\substack{[e,f]\subseteq [x,y] \\ [g,h]\subseteq [w,z]}} \lambda_{a|b|c|d}^{{}^a\hf w | {}^a\hf z | x\hs^c | y\hs^c} \lambda^{{}^{{}^{a}\hf w}\hspace{0pt}  e^{\hf c}|{}^{{}^{a}\hf w}\hspace{0pt} f^{\hf c}|{}^a\hf g \hspace{1pt}  {}^{x\hs^c}|{}^a\hf h \hspace{1pt} {}^{x\hs^c}}_{{}^a\hf w | {}^a\hf z | x\hs^c | y\hs^c} \bigl({}^{{}^{a}\hf w}\hspace{0pt}  e^{\hf c},{}^{{}^{a}\hf w}\hspace{0pt} f^{\hf c}\bigr) \ot \bigl({}^a\hf g \hspace{1pt}  {}^{x\hs^c}, {}^a\hf h \hspace{1pt} {}^{x\hs^c}\bigr).
\end{equation*}
Consequently, since by~\cite{GGV1}*{Corollary~2.5}
$$
\bigl({}^{{}^{a}\hf w}\hspace{0pt}  e^{\hf c},{}^{{}^{a}\hf w}\hspace{0pt} f^{\hf c}\bigr) \ot \bigl({}^a\hf g \hspace{1pt}  {}^{x\hs^c}, {}^a\hf h \hspace{1pt} {}^{x\hs^c}\bigr) = \bigl({}^{{}^{a}\hf c}\hspace{0pt}  e^{\hf c},{}^{{}^{a}\hf c}\hspace{0pt} f^{\hf c}\bigr) \ot \bigl({}^a\hf g \hspace{1pt}  {}^{a\hs^c}, {}^a\hf h \hspace{1pt} {}^{a\hs^c}\bigr),
$$
we have
\begin{equation}\label{ecuacion basica}
r^2(\mathfrak{V})=\sum_{S\subseteq T}\LSQ(S,T) \phi(S),
\end{equation}
where for $S\coloneqq [e,f]\times[g,h]$ we set $\phi(S)\coloneqq \bigl({}^{{}^{a}\hf c}\hspace{0pt}  e^{\hf c},{}^{{}^{a}\hf c}\hspace{0pt} f^{\hf c}\bigr) \ot \bigl({}^a\hf g \hspace{1pt}  {}^{a\hs^c}, {}^a\hf h \hspace{1pt} {}^{a\hs^c}\bigr)$. Since $r^2$ is a coalgebra morphism, this implies that
\begin{equation*}
\delta_{ab}\delta_{cd}=(\epsilon \otimes \epsilon) (\mathfrak{V}) = \sum_{\substack{S\subseteq T\\ \mathfrak{h}(S)=0}} \LSQ(S,T).
\end{equation*}
Let $S\subseteq T$ and let $(p,q)\in S$. We define the splitting of the inclusion $S\subseteq T$ at $(p,q)$ as the pair $(S_1\subseteq T_1,S_2\subseteq T_2)$, where
\begin{align*}
&S_1\coloneqq [(e,g),(p,q)], &&T_1\coloneqq [(a,c),(p,q)], \\
&S_2\coloneqq [(p,q),(f,h)], &&T_2\coloneqq [(p,q),(b,d)].
\end{align*}
Note that
\begin{equation}\label{particion de RSQ}
\RSQ(S,T)=\RSQ(S_1,T_1)\RSQ(S_2,T_2)
\end{equation}
for each splitting of $S\subseteq T$.

\begin{proposition}\label{prop quasi involutivo} Equality~\eqref{r square condicion} is true for $(a,b)\ot (c,d)$ if and only if
\begin{equation}\label{relacion de alphas}
\LSQ(S,T) = \RSQ(S,T)\qquad\text{for all $S\subseteq T$,}
\end{equation}
where $T\coloneqq [a,b]\times [c,d]$.
\end{proposition}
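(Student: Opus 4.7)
The plan is to deduce the proposition essentially for free from the formula \eqref{ecuacion basica}, which has already been established and expresses $r^2(\mathfrak{V})$ as $\sum_{S\subseteq T}\LSQ(S,T)\phi(S)$. First I would observe, by inspection of the definitions, two facts: when $S=T$, the basis element $\phi(T)$ is exactly the right-hand side of \eqref{r square condicion}; and the definition $\RSQ(S,T)=\delta_{ae}\delta_{bf}\delta_{cg}\delta_{dh}$ says precisely that $\RSQ(S,T)$ equals $1$ for $S=T$ and $0$ otherwise. Combining these, equation \eqref{r square condicion} for the fixed pair $(a,b)\otimes(c,d)$ is equivalent to
\[
\sum_{S\subseteq T}\LSQ(S,T)\,\phi(S) = \phi(T),
\]
while \eqref{relacion de alphas} is equivalent to $\LSQ(T,T)=1$ and $\LSQ(S,T)=0$ for every $S\subsetneq T$.

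The main technical point will therefore be the linear independence of the family $\{\phi(S):S\subseteq T\}$ in $D\otimes D$. For $S=[e,f]\times[g,h]$ the element $\phi(S)\in Y\otimes Y$ has coordinates ${}^{{}^{a}\hf c} e^{\hf c}$, ${}^{{}^{a}\hf c} f^{\hf c}$, ${}^a\hf g\,{}^{a\hs^c}$ and ${}^a\hf h\,{}^{a\hs^c}$, and each of these is obtained from $e,f,g,h$ by composing the maps ${}^x(-)$ and $(-)^y$, which are bijections of $X$ by the non-degeneracy of $r_0$ and moreover poset automorphisms of $X$ by condition 2) of Section~\ref{Seccion Preliminares}. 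Thus $S\mapsto\phi(S)$ is an injection into the natural basis of $D\otimes D$, and in particular the $\phi(S)$ are linearly independent.

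Putting everything together, the equivalence of \eqref{r square condicion} and \eqref{relacion de alphas} follows by matching coefficients of the linearly independent vectors $\phi(S)$ in the displayed equation above. I do not foresee a genuine obstacle; the only subtlety is the superscript bookkeeping needed to check that the four coordinates of $\phi(S)$ do recover $(e,f,g,h)$ uniquely, which is routine from the non-degeneracy of $r_0$.
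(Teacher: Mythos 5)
Your argument is correct and is essentially the paper's own proof: both rest on the identity~\eqref{ecuacion basica} together with the injectivity of $(e,f)\ot(g,h)\mapsto\phi(S)$ (coming from the bijectivity of ${}^x(-)$ and $(-)^y$), followed by a comparison of coefficients. The only cosmetic difference is that you spell out explicitly that $\phi(T)$ is the right-hand side of~\eqref{r square condicion} and that $\RSQ(S,T)$ is the indicator of $S=T$, which the paper leaves implicit.
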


\begin{proof} Since the map
$$
(e,f)\ot (g,h)\mapsto \bigl({}^{{}^{a}\hf c}\hspace{0pt}  e^{\hf c},{}^{{}^{a}\hf c}\hspace{0pt} f^{\hf c}\bigr) \ot \bigl({}^a\hf g \hspace{1pt}  {}^{a\hs^c}, {}^a\hf h \hspace{1pt} {}^{a\hs^c}\bigr)
$$
is injective, the result follows comparing coefficients in equalities~\eqref{r square condicion} and~\eqref{ecuacion basica}.
\end{proof}

\begin{proposition}\label{igualdad a izquierda y derecha'} Let $S\subseteq T$, $(p,q)\in S$ and let $((S_1,T_1),(S_2,T_2))$ be
the splitting of $S\subseteq T$ at $(p,q)$. The following equality hold:
$$
\LSQ(S,T)=\LSQ(S_1,T_1)\LSQ(S_2,T_2).
$$

\end{proposition}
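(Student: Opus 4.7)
The plan is to imitate the argument used for Theorem~\ref{igualdad a izquierda y derecha}: split each of the two $\lambda$-factors in the summand of $\LSQ(S,T)$ using the factorization identity~\eqref{split}, then reassemble the resulting four pieces so that the outer sums decouple into $\LSQ(S_1,T_1)$ and $\LSQ(S_2,T_2)$.

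Fix $x\in[a,e]$, $y\in[f,b]$, $w\in[c,g]$ and $z\in[h,d]$. The chain inequalities $x\le e\le p\le f\le y$ and $w\le g\le q\le h\le z$ guarantee that ${}^a\hf q\in[{}^a\hf w,{}^a\hf z]$ and $p\hs^c\in[x\hs^c,y\hs^c]$. First I would apply~\eqref{split} to the first factor at the intermediate values ${}^a\hf q$ and $p\hs^c$, using the relations $(p\hs^c)^{\bar c}=p$ and ${}^{\bar a}({}^a\hf q)=q$. This produces
\[
\lambda_{a|b|c|d}^{{}^a\hf w|{}^a\hf z|x\hs^c|y\hs^c}=\lambda_{a|p|c|q}^{{}^a\hf w|{}^a\hf q|x\hs^c|p\hs^c}\,\lambda_{p|b|q|d}^{{}^a\hf q|{}^a\hf z|p\hs^c|y\hs^c}.
\]
The analogous application of~\eqref{split} to the second factor, this time at the intermediate superscripts ${}^{{}^a\hf w}p\hs^c$ and ${}^a\hf q\,{}^{x\hs^c}$, produces a further product of two $\lambda$'s whose subscripts are $({}^a w,{}^a q,x^c,p^c)$ and $({}^a q,{}^a z,p^c,y^c)$.

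Next I would pair off the four factors. The two with subscripts $(a,p,c,q)$ and $({}^a w,{}^a q,x^c,p^c)$ are, by inspection, the summand of $\LSQ(S_1,T_1)$ indexed by $(x,w)$. To match the remaining two factors with the summand of $\LSQ(S_2,T_2)$ indexed by $(y,z)$, I would invoke \cite{GGV1}*{Corollary~2.5} exactly as in the proof of Theorem~\ref{igualdad a izquierda y derecha}: because $a\le p$, $c\le q$ and $x\hs^c\le p\hs^c=p\hs^q$ put the pairs $\{a,p\}$, $\{c,q\}$ and $\{x\hs^c,p\hs^q\}$ in common connected components of $X$, the equalities ${}^a\!(-)={}^p\!(-)$, $(-)^c=(-)^q$ and $(-)^{x\hs^c}=(-)^{p\hs^q}$ hold on the relevant elements. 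These identifications rewrite every index appearing in the two residual factors into exactly the form occurring in $\LSQ(S_2,T_2)$.

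The proof then concludes since the four-factor expression depends separately on $(x,w)$ and on $(y,z)$, so summing over all four variables factorizes the sum as $\LSQ(S_1,T_1)\LSQ(S_2,T_2)$. The main obstacle is purely notational: tracking the composite action $(-)^{x\hs^c}$ and the nested superscripts of the second $\lambda$-factor through the split, and verifying that each resulting index can indeed be rewritten in the required form. This is the same style of bookkeeping that the authors leave to the reader at the end of the proof of Theorem~\ref{igualdad a izquierda y derecha}, and it goes through by the same component-based arguments.
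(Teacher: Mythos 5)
Your proposal is correct and follows essentially the same route as the paper's own proof: both split each of the two $\lambda$-factors via the factorization identity~\eqref{split} (Proposition~2.10 of~\cite{GGV1}) at the intermediate values determined by $(p,q)$, rewrite the indices of the residual factors using the component-based identifications of \cite{GGV1}*{Corollary~2.5}, and then observe that the summand factors into a part depending only on $(x,w)$ and a part depending only on $(y,z)$. The only bookkeeping item you leave implicit is the additional identification ${}^{{}^a\hf w}(-)={}^{{}^p\hf q}(-)$ needed for the nested left superscripts, which follows from the same comparability argument you already invoke.
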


\begin{proof} Since, by definition
\begin{align*}
&\LSQ(S,T)=\sum_{\substack{x\in [a,e] \\ y\in [f,b]}}\sum_{\substack{w\in [c,g] \\ z\in[h,d]}} \lambda_{a|b|c|d}^{{}^a\hf w | {}^a\hf z | x\hs^c | y\hs^c} \lambda^{{}^{{}^{a}\hf w}\hspace{0pt}  e^{\hf c}|{}^{{}^{a}\hf w}\hspace{0pt} f^{\hf c}|{}^a\hf g \hspace{1pt}  {}^{x\hs^c}|{}^a\hf h \hspace{1pt} {}^{x\hs^c}}_{{}^a\hf w | {}^a\hf z | x\hs^c | y\hs^c},\\
&\LSQ(S_1,T_1)=\sum_{x\in [a,e] }\sum_{w\in [c,g]} \lambda_{a|p|c|q}^{{}^a\hf w | {}^a\hf q | x\hs^c | p\hs^c} \lambda^{{}^{{}^{a}\hf w}\hspace{0pt}  e^{\hf c}|{}^{{}^{a}\hf w}\hspace{0pt} p^{\hf c}|{}^a\hf g \hspace{1pt}  {}^{x\hs^c}|{}^a\hf q \hspace{1pt} {}^{x\hs^c}}_{{}^a\hf w | {}^a\hf q | x\hs^c | p\hs^c}\\
\shortintertext{and}
&\LSQ(S_2,T_2)=\sum_{y\in [f,b]}\sum_{z\in[h,d]} \lambda_{p|b|q|d}^{{}^p\hf q | {}^p\hf z | p\hs^q | y\hs^q} \lambda^{{}^{{}^p\hf q}\hspace{0pt}  p^{\hf q}|{}^{{}^p\hf q}\hspace{0pt} f^{\hf q}|{}^p\hf q \hspace{1pt}  {}^{p\hs^q}|{}^p\hf h \hspace{1pt} {}^{p\hs^q}}_{{}^p\hf q | {}^p\hf z | p\hs^q | y\hs^q},
\end{align*}
in order to prove the first equality, it suffices to note that, by~\cite{GGV1}*{Proposition~2.10} and \cite{GGV1}*{Corollary~2.5}, the equalities
\begin{align*}
\lambda_{a|b|c|d}^{{}^a\hf w | {}^a\hf z | x\hs^c | y\hs^c} & =\lambda_{a|p|c|q}^{{}^a\hf w | {}^a\hf q | x\hs^c | p\hs^c}  \lambda_{p|b|q|d}^{{}^a\hf q | {}^a\hf z | p\hs^c | y\hs^c}= \lambda_{a|p|c|q}^{{}^a\!w | {}^a\!q | x\hs^c | p\hs^c}  \lambda_{p|b|q|d}^{{}^p\!q | {}^p\!z | p\hs^q | y\hs^q}\\
\shortintertext{and}
\lambda^{{}^{{}^{a}\hf w}\hspace{0pt}  e^{\hf c}|{}^{{}^{a}\hf w}\hspace{0pt} f^{\hf c}|{}^a\hf g \hspace{1pt}  {}^{x\hs^c}|{}^a\hf h \hspace{1pt} {}^{x\hs^c}}_{{}^a\hf w | {}^a\hf z | x\hs^c | y\hs^c} & = \lambda^{{}^{{}^{a}\hf w}\hspace{0pt}  e^{\hf c}|{}^{{}^{a}\hf w}\hspace{0pt} p^{\hf c}|{}^a\hf g \hspace{1pt}  {}^{x\hs^c}|{}^a\hf q \hspace{1pt} {}^{x\hs^c}}_{{}^a\hf w | {}^a\hf q | x\hs^c | p\hs^c} \lambda^{{}^{{}^a\hf w}\hspace{0pt}  p^{\hf c}|{}^{{}^a\hf w}\hspace{0pt} f^{\hf c}|{}^a\hf q \hspace{1pt}  {}^{x\hs^c}|{}^a\hf h \hspace{1pt} {}^{x\hs^c}}_{{}^a\hf q | {}^a\hf z | p\hs^c | y\hs^c}\\
& = \lambda^{{}^{{}^{a}\hf w}\hspace{0pt}  e^{\hf c}|{}^{{}^{a}\hf w}\hspace{0pt} p^{\hf c}|{}^a\hf g \hspace{1pt}  {}^{x\hs^c}|{}^a\hf q \hspace{1pt} {}^{x\hs^c}}_{{}^a\hf w | {}^a\hf q | x\hs^c | p\hs^c} \lambda^{{}^{{}^p\hf q}\hspace{0pt}  p^{\hf q}|{}^{{}^p\hf q}\hspace{0pt} f^{\hf q}|{}^p\hf q \hspace{1pt}  {}^{p\hs^q}|{}^p\hf h \hspace{1pt} {}^{p\hs^q}}_{{}^p\hf q | {}^p\hf z | p\hs^q | y\hs^q}
\end{align*}
hold.
\end{proof}

As in Definition~\ref{definicion extremal} we say that $[\alpha,\beta]\subseteq [\gamma,\delta]$ with $\gamma<\delta$ is \emph{lower extremal} if $\alpha=\beta=\gamma$ and that it is \emph{upper extremal} if $\alpha=\beta=\delta$.

\begin{corollary}\label{extremales y altura 1 son suficientes'} The following assertions hold:

\begin{enumerate}

\smallskip

\item The map $r$ has set-type square up to height~$1$ if and only if the equality $\LSQ(S,T) = \RSQ(S,T)$ is true for all $S\subseteq T$ such that $\mathfrak{h}(T) = 1$, and $S = T$ or $S\subseteq T$ is lower extremal.

\smallskip

\item The map $r$ has set-type square if and only if it has set-type square up to height~$1$ and $\LSQ(S,T) = \RSQ(S,T)$ for all $S\subseteq T$ lower extremal, such that $\mathfrak{h}(T)\ge 2$.

\end{enumerate}

\end{corollary}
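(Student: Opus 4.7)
The plan is to mirror the strategy used in the proof of Corollary~\ref{extremales y altura 1 son suficientes}. The key idea is to translate the set-type square condition, via Proposition~\ref{prop quasi involutivo}, into the family of scalar equalities $\LSQ(S,T) = \RSQ(S,T)$, and then exploit the multiplicative behavior of both sides under splitting (Proposition~\ref{igualdad a izquierda y derecha'} for $\LSQ$ and equality~\eqref{particion de RSQ} for $\RSQ$). The sum formula
$$
\sum_{\substack{S\subseteq T\\ \mathfrak{h}(S)=0}} \LSQ(S,T) = \delta_{ab}\delta_{cd}
$$
will be used to reduce the upper extremal cases to the lower extremal ones by subtraction.

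For part~(1), the forward implication is immediate. For the converse, I need only check $\LSQ(S,T) = \RSQ(S,T)$ for every $S\subseteq T$ with $\mathfrak{h}(T) = 1$. Because $\mathfrak{h}(T) = 1$ forces $|T| = 2$, the only possibilities are $S = T$, $S = \underline{S}$ (lower extremal) and $S = \overline{S}$ (upper extremal); the first two are assumed, and for $\overline{S}$ the sum formula combined with $\delta_{ab}\delta_{cd} = 0$ and $\RSQ(\underline{S},T) = \RSQ(\overline{S},T) = 0$ forces the remaining equality.

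For part~(2), the forward direction is again immediate from Proposition~\ref{prop quasi involutivo}. For the converse, I will induct on $\mathfrak{h}(T)$, the cases $\mathfrak{h}(T) = 0$ and $\mathfrak{h}(T) = 1$ being covered by Remark~\ref{r square para altura 0} and part~(1), respectively. In the inductive step with $\mathfrak{h}(T) \ge 2$, the case $S = \underline{S}$ is given by hypothesis; the case $S = \overline{S}$ follows by subtracting from the sum formula the contributions of the other height-zero terms, which are already known; and for every remaining $S$ (including $S = T$), at least one point of $S$ is strictly interior to $T$, so splitting at such a point produces $T_1, T_2$ of strictly smaller height, yielding
$$
\LSQ(S,T) = \LSQ(S_1,T_1)\LSQ(S_2,T_2) = \RSQ(S_1,T_1)\RSQ(S_2,T_2) = \RSQ(S,T)
$$
by Proposition~\ref{igualdad a izquierda y derecha'}, equality~\eqref{particion de RSQ} and the inductive hypothesis.

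The only point needing care is the existence of a strictly interior splitting point in the last case: when $S = T$ this follows from $\mathfrak{h}(T) \ge 2$, so any $(p,q)$ strictly between the endpoints of $T$ works; otherwise, since $S \notin \{\underline{S}, \overline{S}\}$, at least one of the two endpoints of $S$ itself is not a corner of $T$, and splitting at that endpoint gives the required decomposition with both $\mathfrak{h}(T_i) < \mathfrak{h}(T)$.
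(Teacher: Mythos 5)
Your proposal is correct and follows essentially the same route as the paper, whose proof is literally an instruction to mimic the proof of Corollary~\ref{extremales y altura 1 son suficientes} using equality~\eqref{particion de RSQ} and Proposition~\ref{igualdad a izquierda y derecha'}: translate via Proposition~\ref{prop quasi involutivo}, handle the lower extremal case by hypothesis, split all intermediate inclusions at a non-corner point and induct on $\mathfrak{h}(T)$, and recover the upper extremal case from the sum formula $\sum_{\mathfrak{h}(S)=0}\LSQ(S,T)=\delta_{ab}\delta_{cd}$. The only (purely presentational) caveat is that the treatment of $S=\overline{S}$ must come after the splitting argument for the intermediate height-zero singletons, since the subtraction step uses that those terms are already known.
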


\begin{proof} Mimic the proof of Corollary~\ref{extremales y altura 1 son suficientes} using equality~\eqref{particion de RSQ} and Proposition~\ref{igualdad a izquierda y derecha'}.
\end{proof}

For the rest of the section we will assume that $(X,\le)$ is connected.

\begin{remark}
 \label{conexo no degenerado}
By~\cite{GGV1}*{Corollary~2.5} there exist order automorphisms $\phi_l$ and $\phi_r$ of $X$ such that
$$
{}^ba=\phi_l(a)\quad\text{and}\quad a^b=\phi_r(a)\qquad\text{for all $a,b\in X$.}
$$
Moreover, since $r_0\colon X\times X \longrightarrow X \times X$ is a solution of the set-theoretic braid equation,
$\phi_l$ and $\phi_r$ commute. Consequently, $r$ has set-type square if and only if
$$
r^2((a,b)\otimes (c,d))=  \bigl(\varphi(a),\varphi(b)\bigr)\otimes \bigl(\varphi(c),\varphi(d)\bigr) \quad\text{for all $(a,b),(c,d)\in Y$,}
$$
where $\varphi\coloneqq\phi_l\circ \phi_r$.
\end{remark}

\begin{notation}\label{notacion alphas}
For all $s,a,b,c,d\in X$ with $a\prec b$, $c\prec d$ and $i\in \mathds{Z}$, we will write
\begin{alignat}{2}
&s^{(i)}\coloneqq \phi_r^{i}(s),&&\qquad {}^{(i)}\hf s \coloneqq \phi_l^i(s),\label{potencia de phi}\\
&\alpha_r(s)(a,b)\coloneqq\lambda_{a|b|s|s}^{{}^{(1)}\hf s |{}^{(1)}\hf s| a^{(1)} | b^{(1)}},&&\qquad \beta_r(s)(a,b)\coloneqq\lambda_{a|b|s|s}^{{}^{(1)}\hf s |{}^{(1)}\hf s| a^{(1)} | a^{(1)}},\label{definicion de a sub r}\\
&\alpha_l(s)(a,b)\coloneqq\lambda_{s|s|a|b}^{ {}^{(1)}\hf a | {}^{(1)}\hf b | s^{(1)} | s^{(1)}},&&\qquad
\beta_l(s)(a,b)\coloneqq\lambda_{s|s|a|b}^{{}^{(1)}\hf a | {}^{(1)}\hf a | s^{(1)} |s^{(1)}},\label{definicion de a sub l}\\
&\Gamma_{a|b|c|d}\coloneqq \lambda_{a|b|c|d}^{{}^{(1)}\hf c|{}^{(1)}\hf c|a^{(1)}|a^{(1)}}.
\end{alignat}
\end{notation}

\begin{proposition}\label{alpha y beta en quasi involutivo} The map $r$ has set-type square up to height~$1$ if and only if for all $a\prec b$ and $c\in X$
\begin{align}\label{relacion de alphas y betas 1}
&\alpha_r(c)(a,b)\alpha_l({}^{(1)}c)(a^{(1)},b^{(1)})=1,\\
\label{relacion de alphas y betas 2}
&\alpha_r(c)(a,b)\beta_l({}^{(1)}c)(a^{(1)},b^{(1)})+\beta_r(c)(a,b)=0,\\
\label{relacion de alphas y betas 3}
&\alpha_l(c)(a,b)\alpha_r(c^{(1)})({}^{(1)}a,{}^{(1)}b)=1,\\
\shortintertext{and}
\label{relacion de alphas y betas 4}
&\alpha_l(c)(a,b)\beta_r(c^{(1)})({}^{(1)}a,{}^{(1)}b)+\beta_l(c)(a,b)=0.
\end{align}
\end{proposition}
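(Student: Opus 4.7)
My plan is to apply Corollary~\ref{extremales y altura 1 son suficientes'}(1), which reduces the claim to checking $\LSQ(S,T)=\RSQ(S,T)$ only for pairs $S\subseteq T$ with $\mathfrak{h}(T)=1$ and either $S=T$ or $S\subseteq T$ lower extremal. Since $\mathfrak{h}(T)=1$, we have $T=[a,b]\times[c,d]$ with exactly one of $[a,b]$ and $[c,d]$ having length $1$, so there are two symmetric cases to handle: (A) $a\prec b$, $c=d$; (B) $a=b$, $c\prec d$. Case (A) will yield equations~\eqref{relacion de alphas y betas 1} and~\eqref{relacion de alphas y betas 2}, and case (B) will yield~\eqref{relacion de alphas y betas 3} and~\eqref{relacion de alphas y betas 4}.

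For case (A) with $S=T$, the sums defining $\LSQ(S,T)$ collapse to the single choice $x=a$, $y=b$, $w=z=c$, producing a product of two $\lambda$'s. Using Remark~\ref{conexo no degenerado} to replace all ``${}^{a}(-)$'' by $\phi_l$ and all ``$(-)^{c}$'' by $\phi_r$, the first factor matches $\alpha_r(c)(a,b)$ by its definition in~\eqref{definicion de a sub r}, and the second factor matches $\alpha_l({}^{(1)}c)(a^{(1)},b^{(1)})$ by~\eqref{definicion de a sub l}. Since $\RSQ(S,T)=1$ in this case (all four Kronecker deltas are satisfied), this produces equation~\eqref{relacion de alphas y betas 1}. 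For the lower extremal subcase $S=\{(a,c)\}$, the range of $y$ becomes $[a,b]=\{a,b\}$ while $x=a$ and $w=z=c$ remain forced, so $\LSQ(S,T)$ is a sum of two terms: the $y=a$ term contributes $\beta_r(c)(a,b)$ (using the normalization $\lambda_{s|s|t|t}^{{}^s t|{}^s t|s^t|s^t}=1$ recalled in the preliminaries), while the $y=b$ term contributes $\alpha_r(c)(a,b)\beta_l({}^{(1)}c)(a^{(1)},b^{(1)})$. Since $\RSQ(S,T)=0$ (the delta $\delta_{bf}=\delta_{ba}$ vanishes), we get equation~\eqref{relacion de alphas y betas 2}.

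Case (B) is entirely analogous: with $a=b$ and $c\prec d$, the $S=T$ subcase kills the $x,y$-sum and leaves a product identifiable, via the same rewriting, with $\alpha_l(c)(a,b)\alpha_r(c^{(1)})({}^{(1)}a,{}^{(1)}b)$, giving~\eqref{relacion de alphas y betas 3}; the lower extremal subcase $S=\{(a,c)\}$ has $w\in\{c,d\}$, and the two terms yield $\beta_l(c)(a,b)+\alpha_l(c)(a,b)\beta_r(c^{(1)})({}^{(1)}a,{}^{(1)}b)$, giving~\eqref{relacion de alphas y betas 4}. Conversely, since these four families of equations exhaust the list produced by Corollary~\ref{extremales y altura 1 son suficientes'}(1) for $\mathfrak{h}(T)=1$, their validity is equivalent to $r$ having set-type square up to height~$1$.

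The only delicate step is the index bookkeeping: one must verify that each $\lambda$ appearing in the collapsed sums indeed matches the templates of Notation~\ref{notacion alphas}, which requires systematically using the commutativity of $\phi_l$ and $\phi_r$ from Remark~\ref{conexo no degenerado} to rewrite nested exponents such as ${}^{{}^{a}\!c}a^{c}$ and ${}^{a}c^{a^c}$ in the $\phi_l^i$, $\phi_r^i$ form. No further combinatorial input is needed.
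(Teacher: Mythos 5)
Your proposal is correct and follows essentially the same route as the paper's proof: reduce via Corollary~\ref{extremales y altura 1 son suficientes'}(1) to the inclusions $S\subseteq T$ with $\mathfrak{h}(T)=1$ and $S=T$ or $S$ lower extremal, and identify each collapsed identity $\LSQ(S,T)=\RSQ(S,T)$ with one of~\eqref{relacion de alphas y betas 1}--\eqref{relacion de alphas y betas 4} exactly as the paper does. (One trivial slip: in your case (B) the index ranging over $\{c,d\}$ is $z\in[h,d]$, not $w\in[c,g]=\{c\}$; this does not affect the argument.)
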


\begin{proof} When $a\prec b$, $e=a$, $f=b$ and $c=d=g=h$, equality~\eqref{relacion de alphas} becomes
$$
\lambda_{a|b|c|c}^{{}^{(1)}\hf c | {}^{(1)}\hf c | a\hs^{(1)} | b\hs^{(1)}} \lambda_{{}^{(1)}\hf c | {}^{(1)}\hf c | a\hs^{(1)} | b\hs^{(1)}}^{{}^{(1)}\hf a\hf^{(1)}|{}^{(1)}\hf b\hf^{(1)}|{}^{(1)}\hf c\hf^{(1)}|{}^{(1)}\hf c\hf^{(1)}}=1,
$$
which coincides with equality~\eqref{relacion de alphas y betas 1}. Similarly, when $a=b=e=f$, $c\prec d$, $c=d$ and $g=h$, equality~\eqref{relacion de alphas} reduce to equality~\eqref{relacion de alphas y betas 3}. On the other hand, when $e=f=a\prec b$ and $c=d=g=h$ equality~\eqref{relacion de alphas} gives
$$
\lambda_{a|b|c|c}^{{}^{(1)}\hf c | {}^{(1)}\hf c | a\hs^{(1)} | a\hs^{(1)}} \lambda_{{}^{(1)}\hf c | {}^{(1)}\hf c | a\hs^{(1)} | a\hs^{(1)}}^{{}^{(1)}\hf a\hf^{(1)}|{}^{(1)}\hf a\hf^{(1)}|{}^{(1)}\hf c\hf^{(1)}|{}^{(1)}\hf c\hf^{(1)}} + \lambda_{a|b|c|c}^{{}^{(1)}\hf c | {}^{(1)}\hf c | a\hs^{(1)} | b\hs^{(1)}} \lambda_{{}^{(1)}\hf c | {}^{(1)}\hf c | a\hs^{(1)} | b\hs^{(1)}}^{{}^{(1)}\hf a\hf^{(1)}|{}^{(1)}\hf a\hf^{(1)}|{}^{(1)}\hf c\hf^{(1)}|{}^{(1)}\hf c\hf^{(1)}}=0,
$$
which coincides with equality~\eqref{relacion de alphas y betas 2}. A similar computation shows that when $a=b=e=f$ and $g=h=c \prec d$, equality~\eqref{relacion de alphas} reduce to equality~\eqref{relacion de alphas y betas 4}. By Corollary~\ref{extremales y altura 1 son suficientes'}(1) this finishes the proof.
\end{proof}

\begin{corollary}\label{alfas y betas periodicidad}
If $r$ has set-type square up to height~$1$, then
$$
\alpha_h({}^{(\hspace{-0.35pt} 1 \hspace{-0.35pt})}\hf c^{(\hspace{-0.35pt} 1 \hspace{-0.35pt})})({}^{(\hspace{-0.35pt} 1 \hspace{-0.35pt})}\hf a^{(\hspace{-0.35pt} 1 \hspace{-0.35pt})},{}^{(\hspace{-0.35pt} 1 \hspace{-0.35pt})}\hf b^{(\hspace{-0.35pt} 1 \hspace{-0.35pt})})=\alpha_l(c)(a,b)\hspace{-0.5pt}\quad\text{and}\hspace{-0.5pt}\quad \beta_h({}^{(\hspace{-0.35pt} 1 \hspace{-0.35pt})}\hf c^{(\hspace{-0.35pt} 1 \hspace{-0.35pt})})({}^{(\hspace{-0.35pt} 1 \hspace{-0.35pt})}\hf a^{(\hspace{-0.35pt} 1 \hspace{-0.35pt})},{}^{(\hspace{-0.35pt} 1 \hspace{-0.35pt})}\hf b^{(\hspace{-0.35pt} 1 \hspace{-0.35pt})})=\beta_h(c)(a,b),
$$
for $h\in \{r,l\}$, $a\prec b$ in $X$ and $c\in X$.
\end{corollary}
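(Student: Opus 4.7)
The plan is to deduce the corollary as a purely algebraic consequence of the four identities in Proposition~\ref{alpha y beta en quasi involutivo}; no further use of the coefficient calculus is needed. Observe that equations~\eqref{relacion de alphas y betas 1} and~\eqref{relacion de alphas y betas 3} shift the triple $(a,b,c)$ by $((-)^{(1)},(-)^{(1)},{}^{(1)}(-))$ and by $({}^{(1)}(-),{}^{(1)}(-),(-)^{(1)})$ respectively, and invert the values of $\alpha$. Applying them in succession therefore shifts all three arguments by $\varphi=\phi_l\circ\phi_r$ simultaneously, and the two inversions cancel, producing the desired periodicity for $\alpha_r$ and $\alpha_l$.

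Concretely, for the $\alpha_r$ case, first use~\eqref{relacion de alphas y betas 1} to rewrite
$$
\alpha_l({}^{(1)}c)(a^{(1)},b^{(1)})=\alpha_r(c)(a,b)^{-1},
$$
and then apply~\eqref{relacion de alphas y betas 3} with the triple $({}^{(1)}c,a^{(1)},b^{(1)})$ in place of $(c,a,b)$. This yields
$$
\alpha_r({}^{(1)}c^{(1)})({}^{(1)}a^{(1)},{}^{(1)}b^{(1)})=\alpha_l({}^{(1)}c)(a^{(1)},b^{(1)})^{-1}=\alpha_r(c)(a,b),
$$
which is the claim for $h=r$. The identity for $\alpha_l$ is obtained symmetrically, by first applying~\eqref{relacion de alphas y betas 3} and then~\eqref{relacion de alphas y betas 1}.

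For the $\beta$ case the strategy is the same, but now~\eqref{relacion de alphas y betas 2} and~\eqref{relacion de alphas y betas 4} must be fed into the scheme. Solve~\eqref{relacion de alphas y betas 2} for $\beta_r(c)(a,b)$, namely
$$
\beta_r(c)(a,b)=-\alpha_r(c)(a,b)\,\beta_l({}^{(1)}c)(a^{(1)},b^{(1)}),
$$
and then apply~\eqref{relacion de alphas y betas 4} to the shifted triple $({}^{(1)}c,a^{(1)},b^{(1)})$ to express $\beta_l({}^{(1)}c)(a^{(1)},b^{(1)})$ in terms of $\beta_r({}^{(1)}c^{(1)})({}^{(1)}a^{(1)},{}^{(1)}b^{(1)})$ and $\alpha_l({}^{(1)}c)(a^{(1)},b^{(1)})$. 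Substituting and using~\eqref{relacion de alphas y betas 1} to collapse the product $\alpha_r(c)(a,b)\,\alpha_l({}^{(1)}c)(a^{(1)},b^{(1)})=1$, the two minus signs cancel and one is left with
$$
\beta_r(c)(a,b)=\beta_r({}^{(1)}c^{(1)})({}^{(1)}a^{(1)},{}^{(1)}b^{(1)}).
$$
The case $h=l$ is handled analogously, swapping the roles of~\eqref{relacion de alphas y betas 2} and~\eqref{relacion de alphas y betas 4} (and of~\eqref{relacion de alphas y betas 1} and~\eqref{relacion de alphas y betas 3}).

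There is no real obstacle here; the only subtlety is bookkeeping of the shifts $(-)^{(1)}$ and ${}^{(1)}(-)$ (which, because $\phi_l$ and $\phi_r$ commute by Remark~\ref{conexo no degenerado}, can be applied in either order and combine cleanly into $\varphi$).
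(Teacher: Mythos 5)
Your proof is correct and is essentially the paper's own argument: both chain the four identities of Proposition~\ref{alpha y beta en quasi involutivo} applied to suitably shifted triples (legitimate because $\phi_l$ and $\phi_r$ are commuting poset automorphisms, so $a^{(1)}\prec b^{(1)}$ and ${}^{(1)}a\prec{}^{(1)}b$), letting the inversions and signs cancel — you merely write out the $h=r$ case in detail where the paper writes out $h=l$ and declares the other case similar. Note also that you have, correctly, read the first displayed equality as the periodicity $\alpha_h({}^{(1)}c^{(1)})({}^{(1)}a^{(1)},{}^{(1)}b^{(1)})=\alpha_h(c)(a,b)$, which is what the paper's proof actually establishes (the $\alpha_l$ on the right-hand side of the statement appears to be a typo).
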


\begin{proof}
We only consider the case $h = l$ since the case $h=r$ is similar. By equalities~\eqref{relacion de alphas y betas 1} and \eqref{relacion de alphas y betas 3},
$$
\alpha_l({}^{(1)}\hf c^{(1)})({}^{(1)}\hf a^{(1)},{}^{(1)}\hf b^{(1)})=\frac{1}{\alpha_r(c^{(1)})({}^{(1)}\hf a,{}^{(1)}\hf b)} = \alpha_l(c)(a,b),
$$
which proves the first equality, and by equalities \eqref{relacion de alphas y betas 2}, \eqref{relacion de alphas y betas 3} and \eqref{relacion de alphas y betas 4},
\begin{align*}
\beta_l({}^{(1)}\hf c^{(1)})({}^{(1)}\hf a^{(1)},{}^{(1)}\hf b^{(1)}) & = \alpha_l(c)(a,b) \alpha_r(c^{(1)})({}^{(1)}\hf a, {}^{(1)}\hf b) \beta_l({}^{(1)}\hf c^{(1)})({}^{(1)}\hf a^{(1)},{}^{(1)}\hf b^{(1)})\\
& =-\alpha_l(c)(a,b) \beta_r(c^{(1)})({}^{(1)}\hf a, {}^{(1)}\hf b)\\
&=\beta_l(c)(a,b),
\end{align*}
which proves the second equality.
\end{proof}

We will need the following result that complements~\cite{GGV1}*{Proposition~4.5}.

\begin{proposition}\label{complemento de la prop 4.5 de [10]}
Item~6) of \cite{GGV1}*{Sub\-sec\-tion~4.1} is satisfied if and only if for all $a\prec b$ in $X$ there exists a constant $C_m(a,b)\in K^{\times}$ such that
\begin{equation*}\label{cocientes alpha mas condicion}
\qquad\quad \frac{\alpha_l(s\hs^{(1)})(a\hs^{(1)},b\hs^{(1)})}{\alpha_l(s)(a,b)}= \frac{\alpha_r({}^{(1)}\hf t)({}^{(1)}\hf a,{}^{(1)}\hf b)}{\alpha_r(t)(a,b)} = C_m(a,b),
\end{equation*}
for all $s,t\in X$.
\end{proposition}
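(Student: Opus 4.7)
The plan is to rewrite Item~6) of \cite{GGV1}*{Subsection~4.1} in the language of Notation~\ref{notacion alphas} and then read off the claim as a direct reformulation. Under the assumption that $(X,\le)$ is connected, Remark~\ref{conexo no degenerado} supplies global automorphisms $\phi_l$ and $\phi_r$ of $X$ that moreover commute, so every superscript/subscript shift of the form $(\cdot)^{(1)}$ or ${}^{(1)}(\cdot)$ is unambiguous. This is the setting in which Item~6) is to be read.

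First, I would unpack Item~6) explicitly. It is a compatibility relation among the $\lambda$-coefficients of $r$ that, in the connected case, only involves $\lambda$-symbols of the special shapes appearing in \eqref{definicion de a sub r} and \eqref{definicion de a sub l}, i.e.\ those captured by $\alpha_r(s)(a,b)$ and $\alpha_l(s)(a,b)$ for a covering pair $a\prec b$. Substituting the definitions of $\alpha_l$ and $\alpha_r$ into Item~6) converts it into a system of equations of the form
\begin{equation*}
\alpha_l(s\hs^{(1)})(a\hs^{(1)},b\hs^{(1)})\,\alpha_l(s')(a,b) \;=\; \alpha_l((s')\hs^{(1)})(a\hs^{(1)},b\hs^{(1)})\,\alpha_l(s)(a,b)
\end{equation*}
for all $s,s'\in X$, together with the analogous equation on the $\alpha_r$ side and a coupling equation identifying the two cross-ratios.

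Second, using the non-vanishing of $\alpha_l$ and $\alpha_r$ (guaranteed by \eqref{relacion de alphas y betas 1} and \eqref{relacion de alphas y betas 3}), I would divide through. The displayed identity then states exactly that the ratio $\alpha_l(s\hs^{(1)})(a\hs^{(1)},b\hs^{(1)})/\alpha_l(s)(a,b)$ is independent of $s\in X$; denote this common value by $C_m^l(a,b)\in K^\times$. The parallel manipulation on the $\alpha_r$ side produces a constant $C_m^r(a,b)\in K^\times$. The remaining coupling equation forces $C_m^l(a,b)=C_m^r(a,b)$, which is the desired $C_m(a,b)$. Conversely, given such a $C_m(a,b)$, substituting $\alpha_l(s\hs^{(1)})(a\hs^{(1)},b\hs^{(1)})=C_m(a,b)\alpha_l(s)(a,b)$ and $\alpha_r({}^{(1)}\hf t)({}^{(1)}\hf a,{}^{(1)}\hf b)=C_m(a,b)\alpha_r(t)(a,b)$ back through the definitions recovers every equation of Item~6).

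The main obstacle is the bookkeeping in the first step: translating Item~6) through the definitions in \eqref{definicion de a sub r}--\eqref{definicion de a sub l} requires controlling several nested applications of $\phi_l^{\pm 1}$ and $\phi_r^{\pm 1}$, and at each step one uses that $\phi_l\circ\phi_r=\phi_r\circ\phi_l$ (Remark~\ref{conexo no degenerado}) so that expressions of the form ${}^{(1)}(s\hs^{(1)})$ coincide with $({}^{(1)}\hf s)\hs^{(1)}$. Once this substitution is carried out correctly, the equivalence becomes essentially tautological, and it is precisely this reformulation that Proposition~\ref{complemento de la prop 4.5 de [10]} records for later use.
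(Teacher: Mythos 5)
Your overall strategy is the right one, and it is in fact what the paper intends: its own proof is the single line ``Mimic the proof of \cite{GGV1}*{Proposition~4.5(1)}'', i.e.\ unpack the condition from \cite{GGV1} in terms of the $\alpha$'s of Notation~\ref{notacion alphas} and observe that it says exactly that the two displayed ratios are constant and equal. So there is no disagreement about the route. The problem is that your write-up never actually executes the one step that carries all the content. You say you ``would unpack Item~6) explicitly'' and then assert, without derivation, that it becomes a system of cross-ratio equations of a specific shape together with ``a coupling equation identifying the two cross-ratios.'' Since you do not state what Item~6) of \cite{GGV1}*{Subsection~4.1} is, this is a guess at the conclusion rather than a verification of it; in particular the claim that the coupling between the $\alpha_l$-ratio and the $\alpha_r$-ratio (which is the only part of the statement that distinguishes $C_m$ from the separate constants $C_l$ and $C_r$ of \cite{GGV1}*{Proposition~4.5}) really is forced by Item~6) is exactly what needs checking, and it is left unchecked.

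There is also a smaller but genuine misstep: you justify the non-vanishing of $\alpha_l(s)(a,b)$ and $\alpha_r(t)(a,b)$ by appealing to \eqref{relacion de alphas y betas 1} and \eqref{relacion de alphas y betas 3}. Those identities are, by Proposition~\ref{alpha y beta en quasi involutivo}, equivalent to $r$ having set-type square up to height~$1$, which is \emph{not} a hypothesis of Proposition~\ref{complemento de la prop 4.5 de [10]} (the proposition is used in Remark~\ref{condiciones para longitud menor que uno 1} before the set-type-square assumption is introduced). The invertibility of these coefficients has to come from the non-degeneracy of the coalgebra automorphism $r$ itself, as established in \cite{GGV1}, not from the set-type-square conditions. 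As it stands, then, the proposal is a correct plan with the decisive translation step missing and one justification drawn from the wrong hypothesis.
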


\begin{proof}
Mimic the proof of ~\cite{GGV1}*{Proposition~4.5(1)}.
\end{proof}

\begin{remark}\label{condiciones para longitud menor que uno 1}
By items~1)--4) of~\cite{GGV1}*{Proposition~4.5} and Proposition~\ref{complemento de la prop 4.5 de [10]}, if $r$ is a solution of the braid equation, then necessarily
\begin{align}
   & \alpha_r(s)(a,b)=\alpha_r({}^{(1)}\hf s \hs^{(1)})(a,b),&& \alpha_l(s)(a,b)=\alpha_l({}^{(1)}\hf s \hs^{(1)})(a,b),\label{alpha constante}\\
   & \beta_r(s)(a,b)=\beta_r({}^{(1)}\hf s \hs^{(1)})(a,b),&& \beta_l(s)(a,b)=\beta_l({}^{(1)}\hf s \hs^{(1)})(a,b),\label{beta constante}
\end{align}
and for all $a\prec b$ there exist constants $C_r(a,b)$, $C_l(a,b)$ and $C_m(a,b)$ such that
\begin{align}
&C_r(a,b) = \frac{\alpha_r(s^{(1)})(a^{(1)},b^{(1)})}{\alpha_r(s)(a,b)}\quad\text{for all $s$,}\label{cociente alphar}\\
&C_l(a,b) = \frac{\alpha_l({}^{(1)}s)({}^{(1)}a,{}^{(1)}b)}{\alpha_l(s)(a,b)}\quad\text{for all $s$,}\label{cociente alphal}
\shortintertext{and}
& C_m(a,b) = \frac{\alpha_l(s^{(1)})(a^{(1)},b^{(1)})}{\alpha_l(s)(a,b)}= \frac{\alpha_r({}^{(1)}t)({}^{(1)}a,{}^{(1)}b)}{\alpha_r(t)(a,b)}\quad\text{for all $s$ and $t$.}\label{cociente alpham}
\end{align}
Assume now that $r$ has set-type square up to height~$1$. Then, by Corollary~\ref{alfas y betas periodicidad},
$$
C_r(a,b)C_m(a,b) = \frac{\alpha_r(s^{(1)})(a^{(1)},b^{(1)})}{\alpha_r(s)(a,b)} \frac{\alpha_r({}^{(1)}s^{(1)})({}^{(1)}a^{(1)},{}^{(1)}b^{(1)})}{\alpha_r(s^{(1)})(a^{(1)},b^{(1)})} = 1
$$
and, similarly, $C_l(a,b)C_m(a,b) = 1$. So, $C_r(a,b) = C_m(a,b)^{-1} = C_l(a,b)$.
\end{remark}

\begin{proposition}\label{rel a prec b y c prec d}
Assume that $(X,\le)$ has height~$1$. If $r$ has set-type square up to height~$1$, then $r$ has set-type square if and only if
\begin{equation}
\begin{aligned}\label{para altura 1}
\frac{\alpha_l(a)(c,d)}{\alpha_l({}^{(\hspace{-0.2pt} 1 \hspace{-0.2pt})}\hf c)(a^{(\hspace{-0.2pt} 1 \hspace{-0.2pt})},b^{(\hspace{-0.2pt} 1 \hspace{-0.2pt})})} \Gamma_{{}^{(\hspace{-0.2pt} 1 \hspace{-0.2pt})}\hf c|{}^{(\hspace{-0.2pt} 1 \hspace{-0.2pt})}\hf d|a^{(\hspace{-0.2pt} 1 \hspace{-0.2pt})}|b^{\hspace{-0.2pt} 1 \hspace{-0.2pt})}} = & -\Gamma_{a|b|c|d} - \frac{\beta_l(b)(c,d) \beta_l({}^{(\hspace{-0.2pt} 1 \hspace{-0.2pt})}\hf c)(a^{(\hspace{-0.2pt} 1 \hspace{-0.2pt})},b^{(\hspace{-0.2pt} 1 \hspace{-0.2pt})})}{\alpha_l({}^{(\hspace{-0.2pt} 1 \hspace{-0.2pt})}\hf c)(a^{(\hspace{-0.2pt} 1 \hspace{-0.2pt})},b^{(\hspace{-0.2pt} 1 \hspace{-0.2pt})})} \\
&  - \frac{\beta_l(a)(c,d) \beta_l({}^{(1)}\hf d)(a^{(1)},b^{(1)})}{\alpha_l({}^{(1)}\hf d)(a^{(1)},b^{(1)})},
\end{aligned}
\end{equation}
for all $a,b,c,d\in X$ with $a \prec b$ and $c\prec d$.
\end{proposition}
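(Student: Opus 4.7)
The plan is to invoke Corollary~\ref{extremales y altura 1 son suficientes'}(2): since the hypothesis already grants set-type square up to height~$1$, the question reduces to showing that $\LSQ(S,T)=\RSQ(S,T)$ for every lower extremal $S\subseteq T$ with $\mathfrak{h}(T)\ge 2$. Because $(X,\le)$ has height one, the only such $T$ has the form $[a,b]\times[c,d]$ with $a\prec b$ and $c\prec d$, and the lower extremal $S$ is forced to be $\{(a,c)\}$, i.e.\ $e=f=a$ and $g=h=c$. For this choice $\RSQ(S,T)=\delta_{ab}\delta_{cd}=0$, so the proposition amounts to proving that $\LSQ(S,T)=0$ is equivalent to~\eqref{para altura 1}.

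I would then expand $\LSQ(S,T)$ directly from its definition. With $x\in[a,a]=\{a\}$ and $w\in[c,c]=\{c\}$, the sum runs over $(y,z)\in\{a,b\}\times\{c,d\}$, giving four summands, each a product of two $\lambda$-coefficients. Using the splitting formula~\eqref{split} at a corner of the relevant interval, each of these $\lambda$-coefficients factors into two pieces that match one of the ``atomic'' quantities $\alpha_r(s)(-,-)$, $\alpha_l(s)(-,-)$, $\beta_r(s)(-,-)$, $\beta_l(s)(-,-)$, $\Gamma_{-|-|-|-}$ of Notation~\ref{notacion alphas}. For instance the $(a,c)$-summand is simply $\Gamma_{a|b|c|d}$, whereas the $(b,d)$-summand factors as a product of two $\alpha$'s times $\Gamma_{{}^{(1)}\hf c|{}^{(1)}\hf d|a^{(1)}|b^{(1)}}$. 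Carrying out this identification uses the trivial identity $\lambda_{x|x|y|y}^{{}^x\hf y|{}^x\hf y|x^y|x^y}=1$ and the commutativity $\phi_l\phi_r=\phi_r\phi_l$ recorded in Remark~\ref{conexo no degenerado}.

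Once every atomic piece has been extracted, the hypothesis that $r$ has set-type square up to height~$1$ enters through Proposition~\ref{alpha y beta en quasi involutivo}: identities~\eqref{relacion de alphas y betas 1}--\eqref{relacion de alphas y betas 4} let me eliminate every $\alpha_r$ and $\beta_r$ via
\begin{equation*}
\alpha_r(s)(a,b)=\frac{1}{\alpha_l({}^{(1)}\hf s)(a^{(1)},b^{(1)})},\qquad \beta_r(s)(a,b)=-\frac{\beta_l({}^{(1)}\hf s)(a^{(1)},b^{(1)})}{\alpha_l({}^{(1)}\hf s)(a^{(1)},b^{(1)})},
\end{equation*}
together with their $l\leftrightarrow r$ analogues. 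After these substitutions, isolating the coefficient of $\Gamma_{{}^{(1)}\hf c|{}^{(1)}\hf d|a^{(1)}|b^{(1)}}$ on one side of the equation $\LSQ(S,T)=0$ should produce exactly~\eqref{para altura 1}.

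The main obstacle will be the bookkeeping: each $\lambda$-coefficient is loaded with triply-composed exponents, and matching it with the correct atomic piece after splitting demands careful tracking of how the exponents interact with $\phi_l$ and $\phi_r$. A useful auxiliary observation is that the two distinct splittings of the first $\lambda$-factor of the $(b,d)$-summand must yield the same value, giving the consistency identity $\alpha_r(c)(a,b)\,\alpha_l(b)(c,d)=\alpha_l(a)(c,d)\,\alpha_r(d)(a,b)$; combined with~\eqref{relacion de alphas y betas 1}, this is what allows the coefficient of $\Gamma_{{}^{(1)}\hf c|{}^{(1)}\hf d|a^{(1)}|b^{(1)}}$ to be written in the asymmetric form displayed on the left of~\eqref{para altura 1}.
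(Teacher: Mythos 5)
Your proposal is correct and follows essentially the same route as the paper: reduce via Corollary~\ref{extremales y altura 1 son suficientes'}(2) to the single lower extremal inclusion $S=\{(a,c)\}\subseteq[a,b]\times[c,d]$ with $\RSQ(S,T)=0$, expand $\LSQ(S,T)$ into its four summands, factor each $\lambda$-coefficient through~\eqref{split} into the atomic $\alpha$'s, $\beta$'s and $\Gamma$'s, and then eliminate the right-handed quantities via Proposition~\ref{alpha y beta en quasi involutivo} (the paper also invokes Corollary~\ref{alfas y betas periodicidad} at this stage, which your substitutions reproduce). Your closing observation about the consistency of the two splittings of the $(b,d)$-summand is precisely the identity needed to put the coefficient of $\Gamma_{{}^{(1)}\hf c|{}^{(1)}\hf d|a^{(1)}|b^{(1)}}$ into the form displayed in~\eqref{para altura 1}.
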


\begin{proof}
By Corollary~\ref{extremales y altura 1 son suficientes'}(2) and the definition of $\RSQ(S,T)$, we must show that the hypothesis in the statement is equivalent to the fact that $\LSQ(S,T) = 0$ for all~$T\coloneqq [a,b]\times [c,d]$ with $a\prec b$ and $c\prec d$ and $S\subseteq T$ lower extremal. By the very definition of $\LSQ(S,T)$, we have
\begin{align*}
\LSQ(S,T) & = \lambda_{a|b|c|d}^{{}^{(1)}\hf  c | {}^{(1)}\hf c | a^{(1)} | a^{(1)}} \lambda_{{}^{(1)}\hf  c | {}^{(1)}\hf c | a^{(1)} | a^{(1)}}^{{}^{(1)}\hf a^{(1)} |{}^{(1)}\hf a^{(1)} |{}^{(1)}\hf c^{(1)} |{}^{(1)}\hf c^{(1)}}\\[3pt]
& + \lambda_{a|b|c|d}^{{}^{(1)}\hf  c | {}^{(1)}\hf c | a^{(1)} | b^{(1)}} \lambda_{{}^{(1)}\hf  c | {}^{(1)}\hf c | a^{(1)} | b^{(1)}}^{{}^{(1)}\hf a^{(1)} |{}^{(1)}\hf a^{(1)} |{}^{(1)}\hf c^{(1)} |{}^{(1)}\hf c^{(1)}}\\[3pt]
&+ \lambda_{a|b|c|d}^{{}^{(1)}\hf  c | {}^{(1)}\hf d | a^{(1)} | a^{(1)}} \lambda_{{}^{(1)}\hf  c | {}^{(1)}\hf d | a^{(1)} | a^{(1)}}^{{}^{(1)}\hf a^{(1)} |{}^{(1)}\hf a^{(1)} |{}^{(1)}\hf c^{(1)} |{}^{(1)}\hf c^{(1)}}\\[3pt]
&+ \lambda_{a|b|c|d}^{{}^{(1)}\hf  c | {}^{(1)}\hf d | a^{(1)} | b^{(1)}} \lambda_{{}^{(1)}\hf  c | {}^{(1)}\hf d | a^{(1)} | b^{(1)}}^{{}^{(1)}\hf a^{(1)} |{}^{(1)}\hf a^{(1)} |{}^{(1)}\hf c^{(1)} |{}^{(1)}\hf c^{(1)}}\\[3pt]
& = \Gamma_{a|b|c|d} + \lambda_{a|b|c|d}^{{}^{(1)}\hf  c | {}^{(1)}\hf c | a^{(1)} | b^{(1)}} \beta_l({}^{(1)}\hf c)(a^{(1)},b^{(1)})\\[3pt]
&+ \lambda_{a|b|c|d}^{{}^{(1)}\hf  c | {}^{(1)}\hf d | a^{(1)} | a^{(1)}} \beta_r(a^{(1)})({}^{(1)}\hf c,{}^{(1)}\hf d)
+ \lambda_{a|b|c|d}^{{}^{(1)}\hf  c | {}^{(1)}\hf d | a^{(1)} | b^{(1)}} \Gamma_{{}^{(1)}\hf c|{}^{(1)}\hf d| a^{(1)}|b^{(1)}},
\end{align*}
where the last equality follows from the definitions of $\Gamma$, $\beta_l$ and $\beta_r$. Since, by~\eqref{split} and the fact that the maps ${}^a\!(-)$ and $(-)\hs^b$ are automorphisms of posets,
\begin{align*}
&\lambda_{a|b|c|d}^{{}^{(1)}\hf  c | {}^{(1)}\hf c | a^{(1)} | b^{(1)}} = \lambda_{a|b|c|c}^{{}^{(1)}\hf  c | {}^{(1)}\hf c | a^{(1)} | b^{(1)}} \lambda_{b|b|c|d}^{{}^{(1)}\hf  c | {}^{(1)}\hf c | a^{(1)} | b^{(1)}} = \alpha_r(c)(a,b)\beta_l(b)(c,d)\\
&\lambda_{a|b|c|d}^{{}^{(1)}\hf  c | {}^{(1)}\hf d | a^{(1)} | a^{(1)}} = \lambda_{a|a|c|d}^{{}^{(1)}\hf  c | {}^{(1)}\hf d | a^{(1)} | a^{(1)}} \lambda_{a|b|d|d}^{{}^{(1)}\hf  d | {}^{(1)}\hf d | a^{(1)} | a^{(1)}} = \alpha_l(a)(c,d)\beta_r(d)(a,b)
\shortintertext{and}
&\lambda_{a|b|c|d}^{{}^{(1)}\hf  c | {}^{(1)}\hf d | a^{(1)} | b^{(1)}} = \lambda_{a|b|c|c}^{{}^{(1)}\hf  c | {}^{(1)}\hf c | a^{(1)} | b^{(1)}} \lambda_{b|b|c|d}^{{}^{(1)}\hf  c | {}^{(1)}\hf d | b^{(1)} | b^{(1)}} = \alpha_r(c)(a,b)\alpha_l(b)(c,d),
\end{align*}
we obtain
\begin{align*}
\LSQ(S,T) & = \Gamma_{a|b|c|d} + \alpha_r(c)(a,b)\beta_l(b)(c,d)\beta_l({}^{(1)}\hf c)(a^{(1)}, b^{(1)})\\[3pt]
& + \alpha_l(a)(c,d)\beta_r(d)(a,b)\beta_r(a^{(1)})({}^{(1)}\hf c,{}^{(1)}\hf d)\\[3pt]
& + \alpha_r(c)(a,b)\alpha_l(b)(c,d)\Gamma_{{}^{(1)}\hf c|{}^{(1)}\hf d| a^{(1)}|b^{(1)}}\\[3pt]
& = \Gamma_{a|b|c|d} + \frac{\beta_l(b)(c,d) \beta_l({}^{(1)}\hf c)(a^{(1)}, b^{(1)})}{\alpha_l({}^{(1)}\hf c)(a^{(1)}, b^{(1)})}\\[3pt]
& + \frac{\alpha_l(a)(c,d) \beta_l({}^{(1)}\hf d)(a^{(1)}, b^{(1)}) \beta_l({}^{(1)}\hf a^{(1)})({}^{(1)}\hf c^{(1)},{}^{(1)}\hf d^{(1)})}{\alpha_l({}^{(1)}\hf d)(a^{(1)}, b^{(1)}) \alpha_l({}^{(1)}\hf a^{(1)})({}^{(1)}\hf c^{(1)},{}^{(1)}\hf d^{(1)})}\\[3pt]
& + \frac{\alpha_l(a)(c,d)}{\alpha_l({}^{(1)}\hf c)(a^{(1)}, b^{(1)})} \Gamma_{{}^{(1)}\hf c|{}^{(1)}\hf d| a^{(1)}|b^{(1)}}\\[3pt]
& = \Gamma_{a|b|c|d} + \frac{\beta_l(b)(c,d) \beta_l({}^{(1)}\hf c)(a^{(1)}, b^{(1)})}{\alpha_l({}^{(1)}\hf c)(a^{(1)}, b^{(1)})}\\[3pt]
& + \frac{\beta_l(a)(c,d) \beta_l({}^{(1)}\hf d)(a^{(1)}, b^{(1)}) }{\alpha_l({}^{(1)}\hf d)(a^{(1)}, b^{(1)})} + \frac{\alpha_l(a)(c,d)}{\alpha_l({}^{(1)}\hf c)(a^{(1)}, b^{(1)})} \Gamma_{{}^{(1)}\hf c|{}^{(1)}\hf d| a^{(1)}|b^{(1)}},
\end{align*}
where the second equality holds by Proposition~\ref{alpha y beta en quasi involutivo}; and the third one, by Corol\-lary~\ref{alfas y betas periodicidad}. Hence $\LSQ(S,T) = 0$ if and only if equality~\eqref{para altura 1} is true.
\end{proof}

\begin{corollary}
If $(X,\le)$ has height~$1$, then $r$ has set-type square if and only if equalities~\eqref{relacion de alphas y betas 1}--\eqref{para altura 1} are fulfilled.
\end{corollary}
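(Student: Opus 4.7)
The plan is to combine Proposition~\ref{alpha y beta en quasi involutivo} and Proposition~\ref{rel a prec b y c prec d} via a case split on the value $n\coloneqq \mathfrak{h}([a,b])+\mathfrak{h}([c,d])$. Since $(X,\le)$ has height one, for any pair $(a,b),(c,d)\in Y$ one has $n\in\{0,1,2\}$, and consequently $r$ has set-type square if and only if equation~\eqref{r square condicion} holds in each of these three cases.

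First I would dispose of $n=0$: by Remark~\ref{r square para altura 0}, \eqref{r square condicion} is automatic in this case since $r_0$ is a set-theoretic solution of the braid equation. Next, Proposition~\ref{alpha y beta en quasi involutivo} identifies the case $n=1$ with the property that $r$ has set-type square up to height one, and shows that this happens if and only if equalities~\eqref{relacion de alphas y betas 1}--\eqref{relacion de alphas y betas 4} all hold. Finally, under the standing assumption that $r$ already has set-type square up to height one, Proposition~\ref{rel a prec b y c prec d} reduces the case $n=2$, i.e.\ the validity of~\eqref{r square condicion} for all $(a,b),(c,d)\in Y$ with $a\prec b$ and $c\prec d$, to the single equation~\eqref{para altura 1}. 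Stitching these three observations together yields the claimed equivalence.

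There is essentially no technical obstacle: the corollary is a bookkeeping statement organising the two preceding propositions along the height filtration, and the height-one hypothesis on $X$ is precisely what makes the trichotomy $n\in\{0,1,2\}$ exhaustive. The only point requiring a little care is that Proposition~\ref{rel a prec b y c prec d} presupposes set-type square up to height one, but this hypothesis is supplied exactly by equalities~\eqref{relacion de alphas y betas 1}--\eqref{relacion de alphas y betas 4} through Proposition~\ref{alpha y beta en quasi involutivo}, so the two equivalences compose without friction.
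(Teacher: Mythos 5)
Your proof is correct and follows essentially the same route as the paper, which simply combines Propositions~\ref{alpha y beta en quasi involutivo} and~\ref{rel a prec b y c prec d} (together with Proposition~\ref{prop quasi involutivo}); your explicit trichotomy on $\mathfrak{h}([a,b])+\mathfrak{h}([c,d])$ just unfolds what those propositions already encapsulate. The key point you correctly identify — that equalities~\eqref{relacion de alphas y betas 1}--\eqref{relacion de alphas y betas 4} supply the set-type-square-up-to-height-one hypothesis needed to invoke Proposition~\ref{rel a prec b y c prec d} — is exactly how the two equivalences compose.
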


\begin{proof} This follows immediately from Propositions~\ref{prop quasi involutivo}, \ref{alpha y beta en quasi involutivo} and \ref{rel a prec b y c prec d}.
\end{proof}

\section{A family of examples}\label{seccion A family of examples}
In this section we assume that $u,v\in\mathds{N}$ are coprime and for each $l\in\mathds{N}$ we set $\mathds{N}_l\coloneqq\{0,\dots,l-1\}$. Define on the set
$$
X=\{a_0,\dots,a_{u-1},b_0,\dots,b_{v-1}\}
$$
the partial order $a_i<b_j$ for all $i,j$. For the rest of the paper we assume that each element of~$K^{\times}$ has $uv$ distinct $uv$-th roots, and we fix a primitive $uv$-th root of unity $w$.

Let $\phi_r,\phi_l\colon X\to X$ be two commuting poset automorphisms and let
$$
r_0\colon X\times X\longrightarrow X\times X
$$
be the map defined by $r_0(x,y)\coloneqq (\phi_l(y),\phi_r(x))$. It is well-known that $r_0$ is a non-degenerate solution of the braid equation. Moreover, by definition, ${}^x\hf(-) = \phi_l$ and $(-)\hs^x = \phi_r$ for all $x$.

Since $\phi_l$ and $\phi_r$ are poset automorphisms there exist permutations $\sigma_a$, $\tau_a$ of $\mathds{N}_u$, and
$\sigma_b$, $\tau_b$ of $\mathds{N}_v$ such that
\begin{equation}\label{phi en relacion a sigma}
\phi_l(a_i)=a_{\sigma_a(i)},\quad \phi_l(b_j)=b_{\sigma_b(j)},\quad \phi_r(a_i)=a_{\tau_a(i)}\quad\text{and}\quad \phi_r(b_j)=b_{\tau_b(j)}.
\end{equation}
Note that $\sigma_a\circ \tau_a = \tau_a\circ \sigma_a$ and $\sigma_b\circ \tau_b = \tau_b\circ \sigma_b$ because $\phi_r$ and $\phi_l$ commute. From now on we assume that $\varsigma_a\coloneqq \sigma_a\circ \tau_a$ is an $u$-cycle and $\varsigma_b\coloneqq \sigma_b\circ \tau_b$ is an $v$-cycle. Since $\gcd(u,v)=1$ from this it follows that the map $(i,j)\mapsto (\varsigma_a(i), \varsigma_b(j))$ is an $uv$-cycle.

Let $r \colon D\ot D \longrightarrow D\ot D$ be a non-degenerate coalgebra automorphism that has set-type square up to height~$1$ and induces $r_0$ by restriction.

\begin{proposition}\label{vectores constantes} If $r$ satisfies the braid equation, then
\begin{equation}
\begin{alignedat}{2}\label{simp}
&\alpha_l(a_i)(a_k,b_l)=\alpha_l(a_0)(a_0,b_0),&&\qquad \alpha_r(a_i)(a_k,b_l)=\alpha_r(a_0)(a_0,b_0),\\
&\alpha_l(b_j)(a_k,b_l)=\alpha_l(b_0)(a_0,b_0),&&\qquad \alpha_r(b_j)(a_k,b_l)=\alpha_r(b_0)(a_0,b_0),\\
&\beta_l(a_i)(a_k,b_l)=\beta_l(a_0)(a_0,b_0),&&\qquad \beta_r(a_i)(a_k,b_l)=\beta_r(a_0)(a_0,b_0),\\
&\beta_l(b_j)(a_k,b_l)=\beta_l(b_0)(a_0,b_0),&&\qquad \beta_r(b_j)(a_k,b_l)=\beta_r(b_0)(a_0,b_0),
\end{alignedat}
\end{equation}
for all $i$, $j$, $k$ and $l$.
\end{proposition}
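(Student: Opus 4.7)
The plan is to exhibit two $\varphi$-invariance properties for each of the eight functions $\alpha_h(s)(a,b)$ and $\beta_h(s)(a,b)$ (with $h\in\{r,l\}$) and then collapse all arguments by transitivity of $\varphi\coloneqq\phi_l\circ\phi_r$ on the relevant orbits. Throughout I use that every covering pair $(a,b)$ in our poset is of the form $(a_k,b_l)$.

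First I would invoke Remark~\ref{condiciones para longitud menor que uno 1}, which uses the braid-equation hypothesis to yield the first-argument invariance
\[
\alpha_h(\varphi(s))(a,b)=\alpha_h(s)(a,b),\qquad \beta_h(\varphi(s))(a,b)=\beta_h(s)(a,b),
\]
since ${}^{(1)}s^{(1)}=\phi_l(\phi_r(s))=\varphi(s)$. Next, since $r$ has set-type square up to height one, Corollary~\ref{alfas y betas periodicidad} supplies the joint invariance
\[
\alpha_h(\varphi(s))(\varphi(a),\varphi(b))=\alpha_h(s)(a,b),\qquad \beta_h(\varphi(s))(\varphi(a),\varphi(b))=\beta_h(s)(a,b).
\]
Equating the two expressions (that is, substituting $s$ for $\varphi(s)$ in the first-argument invariance and comparing with the joint invariance) gives the pair-invariance
\[
\alpha_h(s)(\varphi(a),\varphi(b))=\alpha_h(s)(a,b),\qquad \beta_h(s)(\varphi(a),\varphi(b))=\beta_h(s)(a,b).
\]

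Now $\varphi$ restricts to the $u$-cycle $\varsigma_a$ on $\{a_0,\dots,a_{u-1}\}$ and to the $v$-cycle $\varsigma_b$ on $\{b_0,\dots,b_{v-1}\}$, so iterating the first-argument invariance yields $\alpha_h(a_i)(a_k,b_l)=\alpha_h(a_0)(a_k,b_l)$ for every $i$ and $\alpha_h(b_j)(a_k,b_l)=\alpha_h(b_0)(a_k,b_l)$ for every $j$, and likewise for $\beta_h$. Because $\gcd(u,v)=1$, the diagonal action $(a_k,b_l)\mapsto(\varphi(a_k),\varphi(b_l))=(a_{\varsigma_a(k)},b_{\varsigma_b(l)})$ is a single $uv$-cycle on the $uv$ covering pairs, so iterating the pair-invariance reduces the pair argument down to $(a_0,b_0)$. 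Concatenating these two reductions yields all eight identities in~\eqref{simp}.

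No step is a genuine obstacle; the one point requiring care is simply that the first-argument invariance rests on the braid-equation hypothesis (through Remark~\ref{condiciones para longitud menor que uno 1}) while the joint invariance rests on the set-type-square-up-to-height-one hypothesis (through Corollary~\ref{alfas y betas periodicidad}), so both standing hypotheses are essential to extract the pair-invariance that drives the orbit argument.
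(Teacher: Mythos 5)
Your proof is correct and follows the same route as the paper: the paper's proof is a one-line appeal to Corollary~\ref{alfas y betas periodicidad} together with equalities~\eqref{alpha constante} and~\eqref{beta constante}, combined via the cycle structure of $\varphi$ exactly as you describe. You have merely made explicit the derivation of the pair-invariance and the orbit-transitivity step that the paper leaves implicit, and you correctly flag that the set-type-square-up-to-height-one standing hypothesis is needed alongside the braid equation.
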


\begin{proof} Since $\varphi=\phi_l\circ \phi_r$ acts as an $u$-cycle on $\{a_0,\dots, a_{u-1}\}$ and as an $v$-cycle on $\{b_0,\dots,b_{v-1}\}$ and $\gcd(u,v)=1$, this follows by Corollary~\ref{alfas y betas periodicidad} and equalities~\eqref{alpha constante} and~\eqref{beta constante}.
\end{proof}

In the sequel we assume that equalities~\eqref{simp} are fulfilled. For the sake of simplicity, from now on we write
\begin{align*}
&\alpha_{la}\coloneqq \alpha_l(a_0)(a_0,b_0),&& \alpha_{ra}\coloneqq \alpha_r(a_0)(a_0,b_0),\\
&\alpha_{lb}\coloneqq \alpha_l(b_0)(a_0,b_0), && \alpha_{rb}\coloneqq \alpha_r(b_0)(a_0,b_0),\\
&\beta_{la}\coloneqq \beta_l(a_0)(a_0,b_0),&& \beta_{ra}\coloneqq \beta_r(a_0)(a_0,b_0),\\
&\beta_{lb}\coloneqq \beta_l(b_0)(a_0,b_0),&& \beta_{rb}\coloneqq \beta_r(b_0)(a_0,b_0).
\end{align*}
By equalities~\eqref{simp} and Proposition~\ref{alpha y beta en quasi involutivo},
\begin{equation}\label{igualdad previa de alphas}
\alpha_{ra} = \alpha_{la}^{-1},\quad \alpha_{rb} = \alpha_{lb}^{-1},\quad \beta_{ra} = - \beta_{la}\alpha_{la}^{-1}\quad\text{and}\quad
\beta_{rb} = - \beta_{lb}\alpha_{lb}^{-1}.
\end{equation}
Combining this with equality~\eqref{split} we obtain that
\begin{equation}\label{igualdad de alphas}
\alpha_{la}^2 = \alpha_{lb}^2.
\end{equation}

\begin{remark}\label{simplificacion'} Let $C_r(a_i,b_j)$, $C_l(a_i,b_j)$ and $C_m(a_i,b_j)$ be as in Remark~\ref{condiciones para longitud menor que uno 1}. By equalities~\eqref{simp}, we know that $C_r(a_i,b_j)=C_l(a_i,b_j)=C_m(a_i,b_j)=1$ for all $i$ and~$j$.
\end{remark}

\begin{notation}\label{definicion gamma}
Set $\alpha\coloneqq\alpha_{la}$, $\beta_a\coloneqq \beta_{la}$, $\beta_b\coloneqq \beta_{lb}$ and $\Gamma_{i|j|k|l}\coloneqq \Gamma_{a_i|b_j|a_k|b_l}$.
\end{notation}

\begin{remark}\label{simplificacion} By equalities~\eqref{igualdad previa de alphas} and equality~\eqref{igualdad de alphas} there exists $\varepsilon\in\{\pm 1\}$ such that the following equalities hold:
\begin{equation}\label{definicion parametros}
\alpha_{lb}=\varepsilon\alpha,\quad \alpha_{ra}=\frac{1}{\alpha},\quad \alpha_{rb}=\frac{\varepsilon}{\alpha}, \quad \beta_{ra}=-\frac{\beta_a}{\alpha}\quad\text{and}\quad \beta_{rb}=-\frac{\varepsilon\beta_b}{\alpha}.
\end{equation}
\end{remark}

\begin{proposition}\label{proposition igualdad epsilon}
Let $\varepsilon$, $\alpha$, $\beta_a$ and $\beta_b$ be as above. Equality~\eqref{eq braided} holds for all $S\subseteq T$ such that $\mathfrak{h}(T)\le 1$ if and only if
\begin{equation}\label{igualdad para epsilon}
\beta_b(\alpha-1)=\beta_a(\varepsilon\alpha-1).
\end{equation}
\end{proposition}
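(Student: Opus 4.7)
The plan is to combine Corollary~\ref{extremales y altura 1 son suficientes} with a direct expansion of \eqref{eq braided} in each remaining case. Since $r_0$ is a set-theoretic solution of the braid equation, the case $\mathfrak{h}(T)=0$ is automatic, so it suffices to verify \eqref{eq braided} for $\mathfrak{h}(T)=1$ with either $S=T$ or $S\subseteq T$ lower extremal. Because $(X,\le)$ has height~$1$, any such $T=[a,b]\times[c,d]\times[e,f]$ has exactly one height-$1$ factor and two singleton factors; this yields three structural subcases, each further split according to $S=T$ or $S=\{\min T\}$, for a total of six scalar equations.

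For the three $S=T$ subcases I would first show that both $\LBE(T,T)$ and $\RBE(T,T)$ collapse to a single product of three $\lambda$-coefficients, two of which are height-$0$ coefficients equal to~$1$ (because $r$ extends $r_0$) and one of which is a height-$1$ coefficient, i.e.\ an $\alpha_l$ or an $\alpha_r$. Using Proposition~\ref{vectores constantes}, the identities \eqref{igualdad previa de alphas}, \eqref{igualdad de alphas} and the parametrization~\eqref{definicion parametros}, each such equality reduces to a tautology of the form $\alpha\cdot\alpha^{-1}=1$ or $\varepsilon^2=1$.

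The three lower-extremal subcases carry the nontrivial content. In each of them $\LBE(S,T)$ and $\RBE(S,T)$ are sums of two terms indexed by the endpoint of the unique non-trivial interval: one term carries a $\beta$-factor (endpoint at the minimum) and the other an $\alpha$-factor (endpoint at the maximum), in the sense of the definitions \eqref{definicion de a sub r}--\eqref{definicion de a sub l}. Expanding via~\eqref{split} and substituting~\eqref{definicion parametros}, each of these scalar equations becomes, after cancelling a nonzero common factor, equivalent to the single relation
\begin{equation*}
\beta_b(\alpha-1)=\beta_a(\varepsilon\alpha-1).
\end{equation*}
Conversely, once this relation is assumed, every one of the six scalar equations is satisfied by back-substitution.

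The hard part will be the bookkeeping of the $\varepsilon$ factors: for every $\phi_l$ or $\phi_r$ appearing in the indices of the $\lambda$'s one must determine whether the resulting element lies in the minimal layer $\{a_i\}$ or the maximal layer $\{b_j\}$, so as to read off the correct $\alpha_{l\star},\beta_{l\star}$, etc., from Proposition~\ref{vectores constantes}. This is precisely where the asymmetry between the two layers, encoded by the sign $\varepsilon$, enters the final relation.
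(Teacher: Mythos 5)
Your argument is correct, but it follows a genuinely different route from the paper's. The paper does not expand the height-$1$ instances of~\eqref{eq braided} directly: it invokes the classification of those instances already established in \cite{GGV1}*{Proposition~4.5 and Subsection~4.1} together with Proposition~\ref{complemento de la prop 4.5 de [10]}, reducing everything to the vector-alignment conditions $\bigl(\alpha_{la}-w^i,\sum_j w^{ij}\beta_{la}\bigr)\sim\bigl(\alpha_{lb}-w^i,\sum_j w^{ij}\beta_{lb}\bigr)$ and to item~5) of \cite{GGV1}*{Subsection~4.1}; under the constancy assumption~\eqref{simp} only the $i=0$ alignment survives (the second coordinates vanish for $i\ne 0$), and that single condition is exactly~\eqref{igualdad para epsilon}. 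Your plan instead re-derives the height-$1$ equations from scratch: the three structural subcases, each with $S=T$ or $S=\underline{S}$ (the case $S=\overline{S}$ following from the counit identities as in the proof of Corollary~\ref{extremales y altura 1 son suficientes} — worth one explicit sentence, since that corollary as stated is about $r$ being a global solution rather than the localized equivalence you need). This is more self-contained and avoids the root-of-unity machinery, at the cost of redoing bookkeeping that \cite{GGV1} has already packaged; both routes land on the same relation, e.g.\ the mixed-layer lower-extremal equation $\beta_{ra}+\alpha_{ra}\beta_{rb}=\beta_{rb}+\alpha_{rb}\beta_{ra}$ becomes $\beta_b(\alpha-1)=\beta_a(\varepsilon\alpha-1)$ after substituting~\eqref{definicion parametros}.

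Two small corrections to your description. For $S=T$ with $\mathfrak{h}(T)=1$, each side of~\eqref{eq braided} is a product of \emph{two} height-$1$ coefficients and \emph{one} height-$0$ coefficient (not the reverse): the three factors of $\LBE(T,T)$ carry the height pairs $([a,b],[c,d])$, $([a,b],[e,f])$, $([c,d],[e,f])$, so exactly two of them see the nontrivial interval. The resulting identity is not of the form $\alpha\cdot\alpha^{-1}=1$ but rather $\alpha_{r p}\alpha_{r q}=\alpha_{r q}\alpha_{r p}$ (and its $l$/$r$ mixtures), a tautology once Proposition~\ref{vectores constantes} is in force. Likewise, in the lower-extremal subcases only the mixed-layer instances are equivalent to~\eqref{igualdad para epsilon}; the same-layer instances reduce to $0=0$. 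Neither point affects the validity of the proof.
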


\begin{proof} By \cite{GGV1}*{Proposition~4.5}, Proposition~\ref{complemento de la prop 4.5 de [10]} and the discussion in \cite{GGV1}*{Subsection~4.1} we know that equality~\eqref{eq braided} is satisfied for all $S\subseteq T$ such that $\mathfrak{h}(T)\le 1$ if and only if the conditions in \cite{GGV1}*{Proposition~4.5}, Proposition~\ref{complemento de la prop 4.5 de [10]} and item~5) of~\cite{GGV1}*{Subsection~4.1} are fulfilled.

Let $w$ be a fixed primitive $uv$-root of unity and let $\ell_j$, $\wp_j$, $\gamma_r$ and $\gamma_l$ be as in items~3) and~4) of~\cite{GGV1}*{Proposition~4.5}. By Remark~\ref{simplificacion'}, we can take $\gamma_l=\gamma_r=1$ and $\ell_j=\wp_j=1$.

Assume that equality~\eqref{eq braided} hold for all $S\subseteq T$ such that $\mathfrak{h}(T)\le 1$. By item~4) of~\cite{GGV1}*{Proposition~4.5} we know that, for all $i$,
\begin{equation}\label{vectores}
\biggl(\alpha_{la}-w^i,\sum_{j=0}^{uv-1} w^{ij}\beta_{la}\biggr)\sim \biggl(\alpha_{lb}-w^i,\sum_{j=0}^{uv-1}w^{ij}\beta_{lb}\biggr).
\end{equation}
Taking $i=0$ and using equalities~\eqref{definicion parametros}, we obtain
$$
\left(\alpha-1,uv\beta_a\right)\sim \left(\varepsilon \alpha - 1,uv\beta_b\right),
$$
which is equivalent to equality~\eqref{igualdad para epsilon}.

Conversely, assume that~\eqref{igualdad para epsilon} holds. By equalities~\eqref{simp}, in order to check that equality~\eqref{eq braided} is satisfied for all $S\subseteq T$ such that $\mathfrak{h}(T)\le 1$ it suffices to verify item~5) of \cite{GGV1}*{Subsection~4.1} and that, for each $0\le i<uv$,
\begin{align}
& \biggl(\alpha_{ra}-w^i,\sum_{j=0}^{v-1} w^{ij}\beta_{ra}\biggr)\sim \biggl(\alpha_{rb}-w^i,\sum_{j=0}^{v-1} w^{ij}\beta_{rb}\biggr),\label{alineados 1}\\
\shortintertext{and}
& \biggl(\alpha_{la}-w^i,\sum_{j=0}^{v-1}w^{ij}\beta_{la}\biggr)\sim \biggl(\alpha_{lb}-w^i,\sum_{j=0}^{v-1} w^{ij}\beta_{lb}\biggr).\label{alineados 2}
\end{align}
When $i\ne 0$, the second coordinate of all the vectors in~\eqref{alineados 1} and~\eqref{alineados 2} vanishes, so conditions~\eqref{alineados 1} and~\eqref{alineados 2} hold. When $i= 0$, the computation above shows that~\eqref{alineados 2} is equivalent to~\eqref{igualdad para epsilon}, and similarly~\eqref{alineados 1} is equivalent to~\eqref{igualdad para epsilon}. Finally, item~5) of~\cite{GGV1}*{Subsection~4.1} holds if and only if
$$
\beta_{li}+\alpha_{li}\beta_{rj}=\beta_{rj}+\alpha_{rj}\beta_{li}\quad\text{for all $i,j\in\{a,b\}$.}
$$
But these equalities follow from a straightforward computation using~\eqref{igualdad para epsilon}.
\end{proof}

\begin{remark}\label{clasificacion}
Since $\varepsilon=\pm 1$ and $\alpha\ne 0$, equality~\eqref{igualdad para epsilon} yields the following possible cases for the values of $\alpha$, $\beta_a$ and $\beta_b$:
\begin{enumerate}[itemsep=0.7ex, topsep=0.7ex, label=\arabic*)]

\item If $\varepsilon=1$, then $\alpha=1$ or $\beta_a=\beta_b$.

\item If $\varepsilon=-1$ and $\chr K\ne 2$, then either $\alpha=1$ and $\beta_a=0$, or $\alpha\ne 1$ and $\beta_b=\beta_a\frac{1+\alpha}{1-\alpha}$.
\end{enumerate}
\end{remark}

In the expression $\Gamma_{i|j|k|l}$ the indices $i$ and $k$ belong to $\{0,\dots,v-1\}$ and the indices~$j$ and~$l$ belong to $\{0,\dots,u-1\}$. So, the first ones can be affected by the maps $\sigma_a$ or $\tau_a$, while the second one, by the maps $\sigma_b$ or $\tau_b$. Since there is not danger of confusion, we will write $\Gamma_{\tau_i|\tau_j|\tau_k|\tau_l}$ instead of $\Gamma_{\tau_a(i)|\tau_b(j)|\tau_a(k)|\tau_b(l)}$, etcetera.

\begin{proposition}\label{pepe}
Let $\alpha$, $\beta_a$ and $\beta_b$ be as in Notation~\ref{definicion gamma}. Recall that by Remark~\ref{simplificacion} there exists $\varepsilon\in\{\pm 1\}$ such that equalities~\eqref{definicion parametros} are satisfied. Then equalities~\eqref{Caso 110}-- \eqref{Caso 111} hold if and only if for all $i,k,m\in \{0,\dots, u-1\}$ and $j,l,n\in \{0,\dots,v-1\}$ the following equalities are fulfilled:
\begin{align}
&\alpha^3\Gamma_{i|j|k|l}-\alpha\Gamma_{\tau_i|\tau_j|\tau_k|\tau_l}= \beta_a(1-\varepsilon\alpha)(\beta_a+\alpha\beta_b),\label{110 bajo}\\[3pt]
&\alpha^3\Gamma_{i|j|k|l} - \alpha \Gamma_{\tau_i|\tau_j|\tau_k|\tau_l}= \beta_b(1 - \alpha)(\beta_a + \alpha\beta_b),\label{110 alto}\\[3pt]
&\alpha^3 \Gamma_{i| j| \sigma_m| \sigma_n} - \alpha \Gamma_{\tau_i| \tau_j| m| n}= \beta_a(\alpha\beta_b(1-\varepsilon)+\beta_a(1-\alpha^2)),\label{101 bajo}\\[3pt]
&\alpha^3\Gamma_{i| j| \sigma_m| \sigma_n} - \alpha \Gamma_{\tau_i| \tau_j| m| n}= \beta_b(\alpha\beta_a(\varepsilon-1)+\beta_b(1-\alpha^2)),\label{101 alto}\\[3pt]
&\alpha^3\Gamma_{\sigma_k |\sigma_l|\sigma_m| \sigma_n} - \alpha\Gamma_{k|l|m|n}= \beta_a(\varepsilon\beta_b+\alpha \beta_a)(\varepsilon-\alpha),\label{011 bajo}\\[3pt]
&\alpha^3\Gamma_{\sigma_k |\sigma_l|\sigma_m| \sigma_n} - \alpha\Gamma_{k|l|m|n}=\beta_b(\beta_b+\varepsilon\alpha \beta_a)(1-\alpha),\label{011 alto}\\[3pt]
\begin{split}\label{111 bajo}
&\beta_a\beta_b(\beta_b\varepsilon(1+\alpha) -\beta_a(1+\varepsilon \alpha))= \alpha^2\beta_a(1+\alpha)\Gamma_{i|j|k|l}\\
&\phantom{\beta_a\beta_b} - \beta_b(1+\varepsilon \alpha)\Gamma_{\tau_i | \tau_j|\tau_k| \tau_l} + \alpha(\beta_b-\varepsilon\beta_a)\Gamma_{\tau_i| \tau_j|m|n}\\
&\phantom{\beta_a\beta_b} - \varepsilon\alpha^2(\beta_a-\beta_b)\Gamma_{i|j|\sigma_m | \sigma_n}
-\alpha^2\beta_b(1+\varepsilon\alpha)\Gamma_{\sigma_k |\sigma_l|\sigma_m |\sigma_n}\\
&\phantom{\beta_a\beta_b}  + \beta_a(1+\alpha)\Gamma_{k|l|m|n}.
\end{split}
\end{align}
\end{proposition}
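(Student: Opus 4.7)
The plan is to apply Theorem~\ref{EYB para longitud mayor que uno}: we must show that, in our height-one setting, each of the families \eqref{Caso 110}--\eqref{Caso 111} is equivalent to the corresponding equation(s) in \eqref{110 bajo}--\eqref{111 bajo}. Since each non-trivial interval of $X$ is a two-element chain $a_i\prec b_j$, the triple sums in \eqref{Caso 110}--\eqref{Caso 011} reduce to sums of four terms, and the triple sum in \eqref{Caso 111} reduces to eight terms. Moreover the ``free'' element in \eqref{Caso 110} (respectively \eqref{Caso 101}, \eqref{Caso 011}) can be chosen either as a minimal element $a_m$ or a maximal element $b_n$, which is exactly what splits each case into its \emph{bajo} and \emph{alto} sub-equations; case \eqref{Caso 111} has no free element and yields the single equation \eqref{111 bajo}.

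Next, I would rewrite every $\lambda$-coefficient that appears in the expanded sums in terms of $\alpha$, $\beta_a$, $\beta_b$, $\varepsilon$ and the $\Gamma_{i|j|k|l}$'s. The recipe is: whenever the indices match one of the patterns in Notation~\ref{notacion alphas}, the $\lambda$ equals a suitable $\alpha_h(s)(u,v)$, $\beta_h(s)(u,v)$ or $\Gamma$; whenever the pattern is $\lambda_{x|x|y|y}^{{}^{(1)}\hf y|{}^{(1)}\hf y|x^{(1)}|x^{(1)}}$ it equals $1$ by \cite{GGV1}*{Remark~2.1}; and in every remaining case the splitting formula \eqref{split} factors the $\lambda$ into a product of such atomic pieces (at most a threefold split for \eqref{Caso 111}). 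Proposition~\ref{vectores constantes} then collapses all the $\alpha_h(s)(u,v)$ and $\beta_h(s)(u,v)$ to one of $\alpha,\varepsilon\alpha,\beta_a,\beta_b$ (with the sign/inverse relations \eqref{igualdad previa de alphas} and \eqref{definicion parametros}), while the poset maps $\phi_l,\phi_r$ act on indices exactly as the permutations $\sigma_a,\sigma_b$ and $\tau_a,\tau_b$; this is how the shifted subscripts $\Gamma_{\tau_i|\tau_j|\tau_k|\tau_l}$, $\Gamma_{\sigma_k|\sigma_l|\sigma_m|\sigma_n}$, $\Gamma_{i|j|\sigma_m|\sigma_n}$ appear on the right-hand sides.

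Once both sides of each of the four identities are rewritten this way, collecting terms and clearing the common nonzero factors of $\alpha$ produces precisely \eqref{110 bajo}--\eqref{111 bajo}. A useful internal consistency check is that in each of the pairs \eqref{110 bajo}/\eqref{110 alto}, \eqref{101 bajo}/\eqref{101 alto}, \eqref{011 bajo}/\eqref{011 alto} the two left-hand sides coincide, so the two right-hand sides must be equal; this equality follows at once from \eqref{igualdad para epsilon} (which rearranges to $\beta_b(1-\alpha)=\beta_a(1-\varepsilon\alpha)$), confirming that bajo and alto carry the same information modulo the height-zero relations already established in Proposition~\ref{proposition igualdad epsilon}.

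The main obstacle is purely the bookkeeping for \eqref{Caso 111}, whose eight summands each require a threefold splitting via \eqref{split}; the number of intermediate coefficients is large and each must be correctly identified as an $\alpha$, a $\beta$, a $\Gamma$ or $1$, after carefully tracking which occurrences of the superscripts ${}^{(1)}\hf(-)$ and $(-)^{(1)}$ correspond to $\sigma$-shifts and which to $\tau$-shifts on the $\Gamma$ indices. Corollary~\ref{alfas y betas periodicidad} (and the commutation $\sigma_h\tau_h=\tau_h\sigma_h$) will be used repeatedly to bring all $\Gamma$ arguments into one of the four patterns that actually appear on the right-hand side of \eqref{111 bajo}. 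After that step the identification of the coefficients of $\Gamma_{i|j|k|l}$, $\Gamma_{\tau_i|\tau_j|\tau_k|\tau_l}$, $\Gamma_{\tau_i|\tau_j|m|n}$, $\Gamma_{i|j|\sigma_m|\sigma_n}$, $\Gamma_{\sigma_k|\sigma_l|\sigma_m|\sigma_n}$, $\Gamma_{k|l|m|n}$ and of the constant term is routine.
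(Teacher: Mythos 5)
Your proposal is correct and follows essentially the same route as the paper: expand the four height-one instances of \eqref{Caso 110}--\eqref{Caso 111} (with the free slot taken to be $a_m$ or $b_n$, which is exactly what produces the bajo/alto pairs), factor every $\lambda$ via \eqref{split} into the atoms $1$, $\alpha_{la},\alpha_{ra},\alpha_{lb},\alpha_{rb}$, $\beta_{la},\beta_{ra},\beta_{lb},\beta_{rb}$ and the $\Gamma$'s, substitute \eqref{definicion parametros}, and collect terms. The paper simply carries out one representative computation (the $e=a_m$ case of \eqref{Caso 110}) in full and declares the rest similar; the only caveat is that your bajo/alto consistency check via \eqref{igualdad para epsilon} is indeed just a sanity check and not a usable hypothesis here, since that relation is only assumed from the paragraph following this proposition onward.
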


\begin{proof} By~\eqref{split}, we have
\begin{align}
&\lambda_{a_i|b_j|a_k|b_l}^{{}^{(1)}\hf a_k|{}^{(1)}\hf a_k|a_i^{(1)}|b_j^{(1)}}=
\lambda_{a_i|b_j|a_k|a_k}^{{}^{(1)}\hf a_k|{}^{(1)}\hf a_k|a_i^{(1)}|b_j^{(1)}}
\lambda_{b_j|b_j|a_k|b_l}^{{}^{(1)}\hf a_k|{}^{(1)}\hf a_k|b_j^{(1)}|b_j^{(1)}}
=\alpha_{ra}\beta_{lb},\label{alfa por beta}\\
&\lambda_{a_i|b_j|a_k|b_l}^{{}^{(1)}\hf a_k|{}^{(1)}\hf b_l|a_i^{(1)}|a_i^{(1)}}=
\lambda_{a_i|a_i|a_k|b_l}^{{}^{(1)}\hf a_k|{}^{(1)}\hf b_l|a_i^{(1)}|a_i^{(1)}}
\lambda_{a_i|b_j|b_l|b_l}^{{}^{(1)}\hf b_l|{}^{(1)}\hf b_l|a_i^{(1)}|a_i^{(1)}}
=\alpha_{la}\beta_{rb}\label{alfa por beta 2}
\shortintertext{and}
&\lambda_{a_i|b_j|a_k|b_l}^{{}^{(1)}\hf a_k|{}^{(1)}\hf b_l|a_i^{(1)}|b_j^{(1)}}=
\lambda_{a_i|a_i|a_k|b_l}^{{}^{(1)}\hf a_k|{}^{(1)}\hf b_l|a_i^{(1)}|a_i^{(1)}}
\lambda_{a_i|b_j|b_l|b_l}^{{}^{(1)}\hf b_l|{}^{(1)}\hf b_l|a_i^{(1)}|b_j^{(1)}}
=\alpha_{la}\alpha_{rb}.\label{alfa por alfa}
\end{align}
A direct computation using these facts and equalities~\eqref{definicion parametros} shows that equality~\eqref{Caso 110} with $a\coloneqq a_i$, $b\coloneqq b_j$, $c\coloneqq a_k$, $d\coloneqq b_l$ and $e\coloneqq a_m$ becomes
$$
\Gamma_{i|j|k|l} + \frac{\varepsilon \beta_b \beta_a}{\alpha} - \frac{\beta_a\beta_b}{\alpha^2} + \frac{\varepsilon\beta_a^2}{\alpha^2} = \cancel{\frac{\beta_a^2}{\alpha^2}} - \cancel{\frac{\beta_a^2}{\alpha^2}} + \frac{\beta_a^2}{\alpha^3} + \frac{1}{\alpha^2}\Gamma_{\tau_i|\tau_j|\tau_k|\tau_l}.
$$
Multiplying by $\alpha^3$ and reordering we see that equality~\eqref{Caso 110} with these values of $a$, $b$, $c$, $d$ and $e$ is fulfilled if and only if
\begin{align*}
\alpha^3\Gamma_{i|j|k|l}-\alpha\Gamma_{\tau_i|\tau_j|\tau_k|\tau_l} &=\alpha\beta_a\beta_b-\varepsilon\alpha^2\beta_a\beta_b -\varepsilon\alpha\beta_a^2 +\beta_a^2\\
& = \beta_a(1-\varepsilon\alpha)(\beta_a+\alpha\beta_b),
\end{align*}
as desired. Similar arguments prove that equality~\eqref{Caso 110} with $a\coloneqq a_i$, $b\coloneqq b_j$, $c\coloneqq a_k$, $d\coloneqq b_l$ and $e\coloneqq b_n$ is fulfilled if and only if equality~\eqref{110 alto} holds, etcetera.
\end{proof}

For the rest of the section we assume that equality~\eqref{eq braided} is satisfied for all $S\subseteq T$, such that $\mathfrak{h}(T)\le 1$, or equivalently, that~\eqref{igualdad para epsilon} holds.

\smallskip

Let $\alpha$, $\beta_a$ and $\beta_b$ be as in Notation~\ref{definicion gamma} and let $\varepsilon\in\{\pm 1\}$ be as in Remark~\ref{simplificacion}. In order to abbreviate the expressions we write $\Gamma$ instead of $\Gamma_{0|0|0|0}$.

\begin{lemma}\label{simplificacion lemma} The right hand sides of equalities~\eqref{110 bajo}--\eqref{011 alto} are equal to
$$
\beta_a\beta_b(1-\varepsilon \alpha^2).
$$
\end{lemma}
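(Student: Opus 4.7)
The goal is to show that six polynomial expressions in $\alpha,\beta_a,\beta_b,\varepsilon$ all collapse to $\beta_a\beta_b(1-\varepsilon\alpha^2)$ under the single scalar constraint~\eqref{igualdad para epsilon}. The plan is to rewrite that constraint in the symmetric form
$$
\beta_a(1-\varepsilon\alpha)=\beta_b(1-\alpha)
$$
and, by multiplying it by $\beta_a$, by $\beta_b$, and by $\varepsilon$ (using $\varepsilon^2=1$), to deduce three auxiliary identities
$$
\beta_a^2(1-\varepsilon\alpha)=\beta_a\beta_b(1-\alpha),\quad \beta_b^2(1-\alpha)=\beta_a\beta_b(1-\varepsilon\alpha),\quad \beta_a(\varepsilon-\alpha)=\varepsilon\beta_b(1-\alpha).
$$
Their point is that they allow me to replace every occurrence of $\beta_a^2$ or $\beta_b^2$ by $\beta_a\beta_b$ times a polynomial in $\alpha$ and $\varepsilon$.

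With these tools, each of~\eqref{110 bajo},~\eqref{110 alto},~\eqref{011 bajo} and~\eqref{011 alto} is handled in a single stroke: expand the product, use the matching auxiliary identity to absorb the quadratic-in-$\beta$ term into $\beta_a\beta_b$, and collect the resulting polynomial in $\alpha$ and $\varepsilon$; in every case the bracket simplifies to $1-\varepsilon\alpha^2$. For instance, for~\eqref{110 bajo} the expansion gives $\beta_a^2(1-\varepsilon\alpha)+\alpha\beta_a\beta_b(1-\varepsilon\alpha)$, and applying the first auxiliary identity turns this into $\beta_a\beta_b[(1-\alpha)+\alpha(1-\varepsilon\alpha)]=\beta_a\beta_b(1-\varepsilon\alpha^2)$; the case~\eqref{011 bajo} uses the third identity to dispose of the factor $\beta_a^2(\varepsilon-\alpha)$, and~\eqref{110 alto}, \eqref{011 alto} use the second identity on $\beta_b^2(1-\alpha)$.

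The only steps that need a bit of extra care are~\eqref{101 bajo} and~\eqref{101 alto}, where the factor $(1-\alpha^2)$ does not line up with either $(1-\varepsilon\alpha)$ or $(1-\alpha)$. For~\eqref{101 bajo} I will use the decomposition $1-\alpha^2=(1-\varepsilon\alpha)+\alpha(\varepsilon-\alpha)$ combined with the first and third auxiliary identities to obtain
$$
\beta_a^2(1-\alpha^2)=\beta_a\beta_b(1-\alpha)(1+\varepsilon\alpha);
$$
for~\eqref{101 alto} the analogous step using $1-\alpha^2=(1-\alpha)(1+\alpha)$ and the second identity yields
$$
\beta_b^2(1-\alpha^2)=\beta_a\beta_b(1-\varepsilon\alpha)(1+\alpha).
$$
In both cases the leftover summand proportional to $1-\varepsilon$ or $\varepsilon-1$ combines with these to produce $\beta_a\beta_b(1-\varepsilon\alpha^2)$ after expanding and gathering.

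No conceptual obstacle is expected. The hardest part is simply the bookkeeping in~\eqref{101 bajo} and~\eqref{101 alto}, but even there it reduces to a three-term expansion once the correct decomposition of $1-\alpha^2$ is inserted. The whole lemma is purely algebraic and follows from the single scalar identity~\eqref{igualdad para epsilon} together with a short polynomial manipulation.
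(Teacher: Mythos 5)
Your proof is correct: all six right-hand sides do reduce to $\beta_a\beta_b(1-\varepsilon\alpha^2)$ via the rewritten constraint $\beta_a(1-\varepsilon\alpha)=\beta_b(1-\alpha)$ and the three auxiliary identities you derive from it (I checked each of the six computations, including the two trickier cases \eqref{101 bajo} and \eqref{101 alto}). The paper's own proof is simply the remark ``Straightforward using~\eqref{igualdad para epsilon},'' so your argument is the same direct algebraic verification, just written out in full.
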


\begin{proof}
Straightforward using~\eqref{igualdad para epsilon}.
\end{proof}

\begin{lemma}\label{casi todo}
Equalities~\eqref{110 bajo}--\eqref{011 alto} are fulfilled if and only if $\Gamma_{i|j|k|l}=\Gamma$ for all $i,j,k,l\frac{}{}$ and $(\alpha^2-1)\Gamma  = \frac{\beta_a\beta_b}{\alpha}(1-\varepsilon \alpha^2)$.
\end{lemma}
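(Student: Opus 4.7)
The plan is to first exploit Lemma~\ref{simplificacion lemma}, which makes the six right-hand sides of \eqref{110 bajo}--\eqref{011 alto} collapse to the single constant $R \coloneqq \beta_a\beta_b(1-\varepsilon\alpha^2)$. Thus the six equations reduce to three distinct ones, which I denote
\begin{align*}
(A)&\colon\quad \alpha^3\Gamma_X - \alpha\Gamma_{\tau X} = R,\\
(B)&\colon\quad \alpha^3\Gamma_{\sigma_{kl} X} - \alpha\Gamma_{\tau_{ij} X} = R,\\
(C)&\colon\quad \alpha^3\Gamma_{\sigma X} - \alpha\Gamma_X = R,
\end{align*}
where $X = (i,j,k,l)$, the map $\tau$ acts coordinate-wise as $(\tau_a,\tau_b,\tau_a,\tau_b)$, $\sigma$ as $(\sigma_a,\sigma_b,\sigma_a,\sigma_b)$, and $\tau_{ij},\sigma_{kl}$ denote the restrictions of $\tau$ and $\sigma$ to the first two, respectively last two, coordinates. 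The ``alto'' versions of the equations coincide with the ``bajo'' ones by Lemma~\ref{simplificacion lemma}.

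The ``if'' direction is a one-line check: if $\Gamma_X \equiv \Gamma$ and $(\alpha^2-1)\Gamma = R/\alpha$, then each of $(A)$--$(C)$ collapses to $(\alpha^3-\alpha)\Gamma = R$, which is just the hypothesis multiplied by $\alpha$.

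For the ``only if'' direction, the essential step is to extract symmetries of $\Gamma$ by comparing $(B)$ with the other two relations. Applying $(C)$ with $\tau_{ij}X$ in place of $X$, and using the identity $\sigma\tau_{ij} = \sigma_{kl}\varsigma_{ij}$ (valid because $\sigma_{kl}$ acts on coordinates disjoint from those of $\sigma_{ij}$ and $\tau_{ij}$, and since $\sigma_a,\tau_a$ and $\sigma_b,\tau_b$ commute by assumption), gives
\[
\alpha^3\Gamma_{\sigma_{kl}\varsigma_{ij}X} - \alpha\Gamma_{\tau_{ij}X} = R;
\]
subtracting $(B)$ and cancelling the bijection $\sigma_{kl}$ yields $\Gamma_{\varsigma_{ij}Y} = \Gamma_Y$ for every $Y$. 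A symmetric move—applying $(A)$ with $\sigma_{kl}X$ in place of $X$ (using $\tau\sigma_{kl}=\tau_{ij}\varsigma_{kl}$) and subtracting $(B)$—produces $\Gamma_{\varsigma_{kl}Y} = \Gamma_Y$. Since $\varsigma_a$ is a $u$-cycle and $\varsigma_b$ is a $v$-cycle with $\gcd(u,v)=1$, the diagonal action $(\varsigma_a,\varsigma_b)$ is transitive on $\mathds{N}_u\times\mathds{N}_v$ by the Chinese Remainder Theorem; consequently $\varsigma_{ij}$-invariance forces $\Gamma_X$ to be independent of the pair $(i,j)$ and $\varsigma_{kl}$-invariance of the pair $(k,l)$, so $\Gamma_X \equiv \Gamma$ is constant. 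Substituting into $(A)$ then gives $(\alpha^3-\alpha)\Gamma = R$, equivalent to the stated relation. I do not foresee any serious obstacle: once the unification $R$ of the six right-hand sides is recognised, the argument is purely permutation-theoretic, with only the standard CRT transitivity as arithmetic input.
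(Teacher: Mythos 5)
Your proof is correct and follows essentially the same route as the paper: unify the six right-hand sides via Lemma~\ref{simplificacion lemma}, compare suitable substitution instances of the three resulting equations (using that $\sigma$ and $\tau$ commute) to extract $\varsigma$-invariance of $\Gamma$, and invoke the $uv$-cycle transitivity coming from $\gcd(u,v)=1$. The only cosmetic difference is that you derive invariance in the $(i,j)$- and $(k,l)$-coordinates separately, whereas the paper first gets constancy on the diagonal $\Gamma_{i|j|i|j}$ and then $(k,l)$-independence; both yield $\Gamma_{i|j|k|l}\equiv\Gamma$ and the same final relation.
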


\begin{proof} Note that by Lemma~\ref{simplificacion lemma}, equalities~\eqref{110 bajo}, \eqref{101 bajo} and~\eqref{011 bajo} yield
\begin{align}
&\alpha^2\Gamma_{i|j|k|l}-\Gamma_{\tau_i|\tau_j|\tau_k|\tau_l}=\frac{\beta_a\beta_b}{\alpha}(1-\varepsilon\alpha^2),\label{110 simple}\\
&\alpha^2 \Gamma_{i| j| \sigma_k|\sigma_l} - \Gamma_{\tau_i| \tau_j| k| l}= \frac{\beta_a\beta_b}{\alpha}(1-\varepsilon\alpha^2)\label{101 simple}
\shortintertext{and}
&\alpha^2\Gamma_{\sigma_i |\sigma_j|\sigma_k| \sigma_l} - \Gamma_{i|j|k|l}=\frac{\beta_a\beta_b}{\alpha}(1-\varepsilon\alpha^2).\label{011 simple}
\end{align}

\smallskip

\noindent $\Rightarrow$)\enspace Recall that the map $(i,j)\mapsto (\varsigma_a(i), \varsigma_b(j))$ is an $uv$-cycle. From equality~\eqref{110 simple} we obtain that
$$
\alpha^2\Gamma_{\sigma_i|\sigma_j|\sigma_i|\sigma_j}- \Gamma_{\varsigma_i| \varsigma_j| \varsigma_i| \varsigma_j} =\frac{\beta_a\beta_b}{\alpha}(1-\varepsilon\alpha^2).
$$
Combining this with~\eqref{011 simple} we deduce that $\Gamma_{\varsigma_i| \varsigma_j| \varsigma_i| \varsigma_j} = \Gamma_{i|j|i|j}$ for all $i$ and $j$, which implies that
\begin{equation}\label{gamma constante en la diagonal}
\Gamma_{i|j|i|j}=\Gamma\qquad\text{for all $i$ and $j$.}
\end{equation}
On the other hand, from equality~\eqref{101 simple} we obtain that
$$
\alpha^2 \Gamma_{i| j| \varsigma_k| \varsigma_l} - \Gamma_{\tau_i| \tau_j| \tau_k| \tau_l} = \frac{\beta_a\beta_b}{\alpha}(1-\varepsilon\alpha^2).
$$
Combining this with~\eqref{110 simple}, we deduce that
$$
\Gamma_{i|j|k|l}=\Gamma_{i|j|\varsigma_k|\varsigma_l}\quad\text{for all $i$, $j$, $k$ and $l$.}
$$
From this and~\eqref{gamma constante en la diagonal} it follows that $\Gamma_{i|j|k|l}=\Gamma$ for all $i$, $j$, $k$ and $l$. Combining this with equality~\eqref{110 simple} we obtain that $(\alpha^2-1)\Gamma = \frac{\beta_a\beta_b}{\alpha}(1-\varepsilon \alpha^2)$, as desired.

\smallskip

\noindent $\Leftarrow$)\enspace By Lemma~\ref{simplificacion lemma}, this is clear.
\end{proof}

\begin{proposition}\label{Gammas contantes y equivalencia}
Equalities~\eqref{110 bajo}--\eqref{111 bajo} hold if and only if
\begin{align}
&\Gamma_{i|j|k|l}=\Gamma\quad\text{for all $i$, $j$, $k$ and $l$,}\label{los Gamas son iguales}\\
&(\alpha^2-1)\Gamma  = \frac{\beta_a\beta_b}{\alpha}(1-\varepsilon \alpha^2)\label{formulas equivalentes}\\
\shortintertext{and}
\begin{split}\label{formulas equivalentes 1}
& \beta_a\beta_b(\beta_b\varepsilon(1+\alpha)-\beta_a(1+\varepsilon \alpha))= \Gamma\bigl(\beta_a(1+\alpha)(1-\varepsilon \alpha+\alpha^2)\\
&\phantom{\beta_a\beta_b(\beta_b\varepsilon(1+\alpha)-\beta_a(1+\varepsilon \alpha))} -\beta_b(1+\varepsilon \alpha)(1-\alpha+\alpha^2)\bigr).
\end{split}
\end{align}
\end{proposition}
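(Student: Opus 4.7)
The plan is to appeal to Lemma~\ref{casi todo} for the bulk of the equivalence and to treat the extra identity~\eqref{111 bajo} by a direct symbolic reduction once the $\Gamma$'s have been constantized.

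First I would observe that Lemma~\ref{casi todo} already asserts that equalities~\eqref{110 bajo}--\eqref{011 alto} hold if and only if conditions \eqref{los Gamas son iguales} and \eqref{formulas equivalentes} are both satisfied. Consequently the proposition reduces to the following claim: under the hypothesis~\eqref{los Gamas son iguales}, equation~\eqref{111 bajo} is equivalent to~\eqref{formulas equivalentes 1}. Both implications will then be handled simultaneously by a single algebraic manipulation.

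To establish this reduced equivalence I would substitute $\Gamma_{i|j|k|l}=\Gamma$ into every Gamma appearing on the right-hand side of~\eqref{111 bajo}. Collecting the coefficient of $\beta_a\Gamma$ produces
\[
\alpha^2(1+\alpha)-\varepsilon\alpha-\varepsilon\alpha^2+(1+\alpha)=(1+\alpha)(1-\varepsilon\alpha+\alpha^2),
\]
while the coefficient of $\beta_b\Gamma$ becomes
\[
-(1+\varepsilon\alpha)+\alpha(1+\varepsilon\alpha)-\alpha^2(1+\varepsilon\alpha)=-(1+\varepsilon\alpha)(1-\alpha+\alpha^2).
\]
These two factorizations turn \eqref{111 bajo} into precisely \eqref{formulas equivalentes 1}, which settles both directions at once.

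The only step requiring genuine care is the factorization displayed above: one must group the six $\Gamma$-terms of \eqref{111 bajo} correctly so that the cyclotomic-like factors $1\mp\alpha+\alpha^2$ emerge. There is no conceptual obstacle here; once this identification is in place, the proposition follows immediately from Lemma~\ref{casi todo} together with the elementary algebraic observation above.
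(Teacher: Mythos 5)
Your proposal is correct and follows the same route as the paper's proof: invoke Lemma~\ref{casi todo} to dispose of~\eqref{110 bajo}--\eqref{011 alto}, then check by direct computation that, once all the $\Gamma_{i|j|k|l}$ are constant, equation~\eqref{111 bajo} collapses to~\eqref{formulas equivalentes 1}. The paper leaves that computation implicit, whereas you carry it out explicitly (and correctly, including the factorizations $(1+\alpha)(1-\varepsilon\alpha+\alpha^2)$ and $-(1+\varepsilon\alpha)(1-\alpha+\alpha^2)$), so no further comment is needed.
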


\begin{proof}
By Lemma~\ref{casi todo} it suffices to prove that if equality~\eqref{formulas equivalentes}
is fulfilled and $\Gamma_{i|j|k|l}=\Gamma$ for all $i$, $j$, $k$ and $l$, then equalities~\eqref{111 bajo} and~\eqref{formulas equivalentes 1} are equivalent, which follows by a direct computation.
\end{proof}

Recall that $X$ is the set $\{a_0,\dots,a_{u-1},b_0,\dots,b_{v-1}\}$, where $u,v\in\mathds{N}$ are coprime, endowed
with the height one order given by $a_i<b_j$ for all $i,j$. Recall also that $\phi_r,\phi_l\colon X\to X$ are two commuting poset automorphisms and
$$
r_0\colon X\times X\longrightarrow X\times X
$$
is the map defined by $r_0(x,y)\coloneqq (\phi_l(y),\phi_r(x))$. Let $\sigma_a$, $\sigma_b$, $\tau_a$ and $\tau_b$ be as at the beginning of the section. Recall finally that $(i,j)\mapsto (\varsigma_a(i),\varsigma_b(j))$ is an $uv$-cycle. Let
$$
r\colon D\otimes D\longrightarrow D\otimes D
$$
be a non-degenerate coalgebra automorphism that has set-type square up to height~$1$ and induces $r_0$ by restriction. By Theorem~\ref{EYB para longitud mayor que uno} and Propositions~\ref{proposition igualdad epsilon}, \ref{pepe} and~\ref{Gammas contantes y equivalencia} we know that $r$ satisfies the braid equation if and only it conditions~\eqref{igualdad para epsilon}, \eqref{los Gamas son iguales}, \eqref{formulas equivalentes} and~\eqref{formulas equivalentes 1} are fulfilled. In Remark~\ref{clasificacion} we described the solutions of equation~\eqref{igualdad para epsilon}. In the sequel we are going to construct the families of solutions of the braid equation corresponding to each of the cases of that remark. By condition~\eqref{los Gamas son iguales} we assume that $\Gamma_{i|j|k|l}=\Gamma$ for all $i$, $j$, $k$ and $l$.

\begin{proposition}\label{caso alpha uno y epsilon 1}
If $\varepsilon=1$ and $\alpha=1$, then $r$ satisfies the braid equation if and only if either $\beta_a=\beta_b$, or $\beta_a\ne \beta_b$ and $2\Gamma+2\beta_b\beta_a=0$.
\end{proposition}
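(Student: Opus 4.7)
The plan is to specialize the characterization obtained just before the proposition: by Theorem~\ref{EYB para longitud mayor que uno} together with Propositions~\ref{proposition igualdad epsilon}, \ref{pepe} and \ref{Gammas contantes y equivalencia}, the map $r$ is a solution of the braid equation if and only if conditions~\eqref{igualdad para epsilon}, \eqref{los Gamas son iguales}, \eqref{formulas equivalentes} and \eqref{formulas equivalentes 1} hold. Since we are already assuming $\Gamma_{i|j|k|l}=\Gamma$ for all indices, all that remains is to substitute $\varepsilon = 1$ and $\alpha = 1$ into the three remaining identities and simplify.

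First I would observe that with $\alpha=1$ and $\varepsilon=1$ the identity \eqref{igualdad para epsilon}, namely $\beta_b(\alpha - 1) = \beta_a(\varepsilon\alpha - 1)$, reduces to $0=0$, hence is automatic. Likewise the factor $\alpha^2 - 1$ on the left hand side of \eqref{formulas equivalentes} and the factor $1 - \varepsilon\alpha^2$ on its right hand side both vanish, so \eqref{formulas equivalentes} is trivially satisfied as well. Therefore everything is controlled by the single remaining equation \eqref{formulas equivalentes 1}.

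The main (and only) step of substance is to evaluate \eqref{formulas equivalentes 1} at $\varepsilon = \alpha = 1$. On the left hand side the expression becomes
\[
\beta_a\beta_b\bigl(2\beta_b - 2\beta_a\bigr) = 2\beta_a\beta_b(\beta_b - \beta_a),
\]
while on the right hand side the two bracketed polynomials $(1+\alpha)(1-\varepsilon\alpha+\alpha^2)$ and $(1+\varepsilon\alpha)(1-\alpha+\alpha^2)$ both specialize to $2$, so the right hand side becomes $2\Gamma(\beta_a - \beta_b)$. Bringing everything to one side I would then factor the resulting identity as
\[
2(\beta_b - \beta_a)\bigl(\Gamma + \beta_a\beta_b\bigr) = 0,
\]
which is precisely the condition $(\beta_b-\beta_a)(2\Gamma+2\beta_a\beta_b)=0$.

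Finally I would read off the two alternatives: the vanishing of this product is equivalent to $\beta_a = \beta_b$, or else $\beta_a \ne \beta_b$ together with $2\Gamma + 2\beta_a\beta_b = 0$, matching the statement. There is no real obstacle here; the only thing to watch is the bookkeeping in the specialization of \eqref{formulas equivalentes 1} and checking that the two cubic factors $(1\pm\alpha)(1\mp\varepsilon\alpha+\alpha^2)$ produce the same constant $2$ when $\varepsilon = \alpha = 1$, so that the clean factorization above appears.
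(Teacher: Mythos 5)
Your proposal is correct and follows exactly the route the paper takes: the general criterion (conditions \eqref{igualdad para epsilon}, \eqref{formulas equivalentes}, \eqref{formulas equivalentes 1} under the standing assumption \eqref{los Gamas son iguales}) is specialized to $\varepsilon=\alpha=1$, where the first two become trivial and the third factors as $2(\beta_b-\beta_a)(\Gamma+\beta_a\beta_b)=0$. This is precisely the ``straightforward computation'' the paper invokes, carried out explicitly and with the characteristic-$2$ case handled correctly by the stated factorization.
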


\begin{proof}
A straightforward computation shows that under the hypotheses of the statement, conditions~\eqref{formulas equivalentes} and~\eqref{formulas equivalentes 1} are satisfied if and only if $\beta_a=\beta_b$, or $\beta_a\ne \beta_b$ and $2\Gamma+2\beta_b\beta_a=0$.
\end{proof}

\begin{proposition}
Assume that $\varepsilon=1$, $\alpha\ne 1$ and $\beta_a=\beta_b$.
\begin{enumerate}[itemsep=0.7ex, topsep=0.7ex, label=\arabic*)]

  \item If $\alpha=-1$, then $r$ satisfies the braid equation.

  \item If $\alpha\ne -1$, then $r$ satisfies the braid equation if and only if $\Gamma=-\frac{\beta_a^2}{\alpha}$.

\end{enumerate}

\end{proposition}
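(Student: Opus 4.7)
The plan is to invoke Proposition~\ref{Gammas contantes y equivalencia}, which under our standing assumptions (equality~\eqref{eq braided} holds for $\mathfrak{h}(T)\le 1$ and the $\Gamma_{i|j|k|l}$ are all equal to $\Gamma$) tells us that $r$ satisfies the braid equation if and only if the two scalar equations~\eqref{formulas equivalentes} and~\eqref{formulas equivalentes 1} are fulfilled. Writing $\beta$ for the common value $\beta_a=\beta_b$ and setting $\varepsilon=1$, the task reduces to checking these two equalities by direct substitution.

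First I would handle equation~\eqref{formulas equivalentes}. Substituting $\varepsilon=1$ and $\beta_a=\beta_b=\beta$, it becomes
\[
(\alpha^2-1)\Gamma = \frac{\beta^2}{\alpha}(1-\alpha^2),
\]
which I would rewrite as $(\alpha^2-1)\bigl(\Gamma+\frac{\beta^2}{\alpha}\bigr)=0$. If $\alpha=-1$ the factor $\alpha^2-1$ vanishes and the equation is automatic, giving part~1); if $\alpha\ne\pm 1$ (recall $\alpha\ne 1$ by hypothesis), then $\alpha^2-1\ne 0$ forces $\Gamma=-\beta^2/\alpha$, giving part~2).

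Next I would verify that equation~\eqref{formulas equivalentes 1} is automatically satisfied under these hypotheses. With $\varepsilon=1$ and $\beta_a=\beta_b=\beta$, the left-hand side carries the factor $\beta\varepsilon(1+\alpha)-\beta(1+\varepsilon\alpha)=\beta(1+\alpha)-\beta(1+\alpha)=0$, so it vanishes. Similarly the right-hand side carries the factor $\beta(1+\alpha)(1-\varepsilon\alpha+\alpha^2)-\beta(1+\varepsilon\alpha)(1-\alpha+\alpha^2)=\beta(1+\alpha)(1-\alpha+\alpha^2)-\beta(1+\alpha)(1-\alpha+\alpha^2)=0$. Thus~\eqref{formulas equivalentes 1} reduces to $0=0$ and imposes no additional constraint.

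Combining these two observations with Proposition~\ref{Gammas contantes y equivalencia} yields both statements. There is no real obstacle here: the whole argument is a careful substitution, and the only conceptual point worth emphasising is that the symmetry $\beta_a=\beta_b$ together with $\varepsilon=1$ trivialises~\eqref{formulas equivalentes 1}, so that the entire braid-equation content is concentrated in the single relation~\eqref{formulas equivalentes}, which in turn factors and gives the dichotomy $\alpha=-1$ versus the constraint on $\Gamma$.
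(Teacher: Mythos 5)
Your proposal is correct and follows exactly the paper's route: the paper's own proof is the one-line observation that, under the standing reductions to equations~\eqref{formulas equivalentes} and~\eqref{formulas equivalentes 1}, these are automatic when $\alpha=-1$ and force $\Gamma=-\beta_a^2/\alpha$ otherwise. Your explicit factorization $(\alpha^2-1)\bigl(\Gamma+\tfrac{\beta^2}{\alpha}\bigr)=0$ and the check that both sides of~\eqref{formulas equivalentes 1} vanish when $\varepsilon=1$ and $\beta_a=\beta_b$ are exactly the computations the paper leaves implicit.
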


\begin{proof} Because under the conditions of item~1), equations~\eqref{formulas equivalentes} and~\eqref{formulas equivalentes 1} are always satisfied, while under the conditions of item~2), equations~\eqref{formulas equivalentes} and~\eqref{formulas equivalentes 1} are satisfied if and only if $\Gamma=-\beta_a^2/\alpha$.
\end{proof}

\begin{proposition}
Assume that $\varepsilon=-1$ and $\chr K\ne 2$.
\begin{enumerate}[itemsep=0.7ex, topsep=0.7ex, label=\arabic*)]

\item If $\alpha=1$ and $\beta_a=0$, then $r$ satisfies the braid equation.

\item If $\alpha=-1$ and $\beta_b=0$, then $r$ satisfies the braid equation.

\item If $\alpha^2\ne 1$ and $\beta_b=\beta_a\frac{1+\alpha}{1-\alpha}$, then $r$ satisfies the braid equation if and only if $\Gamma=-\frac{\beta_a^2(1+\alpha^2)}{\alpha(1-\alpha)^2}$.

\end{enumerate}
\end{proposition}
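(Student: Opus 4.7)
The plan is to appeal to the characterisation obtained just above: by Theorem~\ref{EYB para longitud mayor que uno} together with Propositions~\ref{proposition igualdad epsilon} and~\ref{Gammas contantes y equivalencia}, once we have fixed $\varepsilon=-1$, $\chr K\ne 2$ and $\Gamma_{i|j|k|l}=\Gamma$, the only remaining conditions for $r$ to satisfy the braid equation are \eqref{formulas equivalentes} and~\eqref{formulas equivalentes 1}; condition \eqref{igualdad para epsilon} is already encoded in the three cases via Remark~\ref{clasificacion}. Specialising to $\varepsilon=-1$, these read
\begin{align*}
&(\alpha^2-1)\Gamma = \frac{\beta_a\beta_b}{\alpha}(1+\alpha^2),\\
&\beta_a\beta_b\bigl(-\beta_b(1+\alpha)-\beta_a(1-\alpha)\bigr)= \Gamma\bigl(\beta_a(1+\alpha)(1+\alpha+\alpha^2)-\beta_b(1-\alpha)(1-\alpha+\alpha^2)\bigr).
\end{align*}

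Items~1 and~2 are handled by inspection. Under $\alpha=1$, $\beta_a=0$, the factor $\alpha^2-1$ kills both sides of the first equation, while in the second equation the left-hand side vanishes because of $\beta_a=0$ and each of the two summands on the right carries either a factor $\beta_a$ or a factor $1+\varepsilon\alpha=0$. The case $\alpha=-1$, $\beta_b=0$ is entirely symmetric: $\alpha^2-1=0$ still kills the first equation, $\beta_b=0$ kills both the left-hand side and the $\beta_b$-summand on the right of the second, and $1+\alpha=0$ kills the remaining summand.

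For item~3 I would substitute $\beta_b=\beta_a\tfrac{1+\alpha}{1-\alpha}$ (legitimate since $\alpha\ne 1$) into \eqref{formulas equivalentes}. Using $\alpha^2-1=-(1-\alpha)(1+\alpha)$ and cancelling the factor $1+\alpha\ne 0$, the equation reduces to the scalar identity $\Gamma=-\beta_a^2(1+\alpha^2)/(\alpha(1-\alpha)^2)$. What remains is to check that, with this value of $\Gamma$, equation~\eqref{formulas equivalentes 1} holds automatically. Eliminating $\beta_b$ by the same substitution, the left-hand side collapses to $-2\beta_a^3(1+\alpha)(1+\alpha^2)/(1-\alpha)^2$, while the right-hand side — after noting that $\beta_b(1-\alpha)=\beta_a(1+\alpha)$ and invoking the identity $(1+\alpha+\alpha^2)-(1-\alpha+\alpha^2)=2\alpha$ — simplifies to $2\alpha(1+\alpha)\beta_a\Gamma$; the two sides coincide precisely when $\Gamma$ takes the stated value.

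The only genuine obstacle is the arithmetic inside item~3, specifically tracking the cancellations of the cubic-in-$\alpha$ polynomials on the right-hand side of~\eqref{formulas equivalentes 1}; this is elementary once $\beta_b$ is eliminated. The hypothesis $\chr K\ne 2$ enters only via Remark~\ref{clasificacion}, to guarantee that the three cases of the proposition exhaust all the solutions of~\eqref{igualdad para epsilon}.
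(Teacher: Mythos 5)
Your proposal is correct and follows essentially the same route as the paper: reduce everything to conditions~\eqref{igualdad para epsilon}, \eqref{los Gamas son iguales}, \eqref{formulas equivalentes} and~\eqref{formulas equivalentes 1}, dispose of items~1) and~2) by inspection, and in item~3) use~\eqref{formulas equivalentes} to pin down $\Gamma=-\frac{\beta_a^2(1+\alpha^2)}{\alpha(1-\alpha)^2}$ and then verify that~\eqref{formulas equivalentes 1} becomes an identity. Your intermediate expressions (the left side $-2\beta_a^3(1+\alpha)(1+\alpha^2)/(1-\alpha)^2$ and the right side $2\alpha(1+\alpha)\beta_a\Gamma$) check out, so no gap.
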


\begin{proof} Clearly, under the conditions of item~1), equations~\eqref{formulas equivalentes} and~\eqref{formulas equivalentes 1} are always satisfied. Assume we are under the hypotheses of item~3). Then Equality~\eqref{formulas equivalentes} holds if and only if
$$
\Gamma=-\frac{\beta_a\beta_b}{\alpha}\frac{1+\alpha^2}{1-\alpha^2}=-\frac{\beta_a^2(1+\alpha^2)}{\alpha(1-\alpha)^2}.
$$
Replacing $\Gamma$ by this, $\beta_b$ by $\beta_a\frac{1+\alpha}{1-\alpha}$ and $\varepsilon$ by $-1$ in both sides of~\eqref{formulas equivalentes 1},
we obtain
$$
-2\beta_a^3(1+\alpha^2)\frac{1+\alpha}{1-\alpha}=-2\beta_a^3(1+\alpha^2)\frac{1+\alpha}{1-\alpha}.
$$
Hence $r$ satisfies the braid equation if and only if $\Gamma=-\frac{\beta_a^2(1+\alpha^2)}{\alpha(1-\alpha)^2}$.
\end{proof}

For each $(a,b)$, $(c,d)$, $(e,f)$ and~$(g,h)$ in~$Y$, let~$\lambda_{a|b|c|d}^{e|f|g|h}$ be as in formula~\eqref{def de los lambdas}.

\begin{theorem}\label{teorema parametros} Let $r$ be as above of Proposition~\ref{caso alpha uno y epsilon 1}. If $r$ is a solution of the braid equation, then the possibly nonzero coefficients $\lambda_{a|b|c|d}^{e|f|g|h}$ depend on the parameters $\varepsilon$, $\alpha$, $\beta_a$, $\beta_b$ and $\Gamma$ via the following formulas, in which we use the notation~\eqref{potencia de phi}:
\begin{alignat*}{4}
&\lambda_{x|x|y|y}^{{}^{(1)}\hf y|{}^{(1)}\hf y|x^{(1)}|x^{(1)}}&&=1,&&&&\\[2pt]
&\lambda_{a_i|a_i|a_k|b_l}^{{}^{(1)}\hf a_k|{}^{(1)}\hf a_k|a_i^{(1)}|a_i^{(1)}}&&=\beta_a, &&\qquad\quad
\lambda_{b_j|b_j|a_k|b_l}^{{}^{(1)}\hf a_k|{}^{(1)}\hf a_k|b_j^{(1)}|b_j^{(1)}}&&=\beta_b,\\[2pt]
&\lambda_{a_i|b_j|a_k|a_k}^{{}^{(1)}\hf a_k|{}^{(1)}\hf a_k|a_i^{(1)}|a_i^{(1)}}&&=-\frac{\beta_a}{\alpha},&&\qquad\quad \lambda_{a_i|b_j|b_l|b_l}^{{}^{(1)}\hf b_l|{}^{(1)}\hf b_l|a_i^{(1)}|a_i^{(1)}}&&=-\frac{\varepsilon\beta_b}{\alpha},
 \\[2pt]
&\lambda_{a_i|a_i|a_k|b_l}^{{}^{(1)}\hf b_l|{}^{(1)}\hf b_l|a_i^{(1)}|a_i^{(1)}}&&=-\beta_a, &&\qquad\quad
\lambda_{b_j|b_j|a_k|b_l}^{{}^{(1)}\hf b_l|{}^{(1)}\hf b_l|b_j^{(1)}|b_j^{(1)}}&&=-\beta_b,\\[2pt]
& \lambda_{a_i|b_j|a_k|a_k}^{{}^{(1)}\hf a_k|{}^{(1)}\hf a_k|b_j^{(1)}|b_j^{(1)}}&&=\frac{\beta_a}{\alpha}, &&\qquad\quad \lambda_{a_i|b_j|b_l|b_l}^{{}^{(1)}\hf b_l|{}^{(1)}\hf b_l|b_j^{(1)}|b_j^{(1)}}&&=\frac{\varepsilon\beta_b}{\alpha}, \\[2pt]
 &\lambda_{a_i|a_i|a_k|b_l}^{{}^{(1)}\hf a_k|{}^{(1)}\hf b_l|a_i^{(1)}|a_i^{(1)}}&&=\alpha, &&\qquad\quad
 \lambda_{b_j|b_j|a_k|b_l}^{{}^{(1)}\hf a_k|{}^{(1)}\hf b_l|b_j^{(1)}|b_j^{(1)}}&&=\varepsilon  \alpha,\\[2pt]
&\lambda_{a_i|b_j|a_k|a_k}^{{}^{(1)}\hf a_k|{}^{(1)}\hf a_k|a_i^{(1)}|b_j^{(1)}}&&=\frac{1}{\alpha},&&\qquad\quad
\lambda_{a_i|b_j|b_l|b_l}^{{}^{(1)}\hf b_l|{}^{(1)}\hf b_l|a_i^{(1)}|b_j^{(1)}}&&=\frac{\varepsilon}{\alpha},\\[2pt]
&\lambda_{a_i|b_j|a_k|b_l}^{{}^{(1)}\hf a_i|{}^{(1)}\hf a_i|a_k^{(1)}|a_k^{(1)}}&&=\Gamma, &&\qquad\quad
\lambda_{a_i|b_j|a_k|b_l}^{{}^{(1)}\hf b_l|{}^{(1)}\hf b_l|a_i^{(1)}|a_i^{(1)}}&&=\frac{\varepsilon \beta_a\beta_b}{\alpha},\\[2pt]
& \lambda_{a_i|b_j|a_k|b_l}^{{}^{(1)}\hf a_k|{}^{(1)}\hf a_k|b_j^{(1)}|b_j^{(1)}}&&=\frac{ \beta_a\beta_b}{\alpha}, &&\qquad\quad
\lambda_{a_i|b_j|a_k|b_l}^{{}^{(1)}\hf b_l|{}^{(1)}\hf b_l|b_j^{(1)}|b_j^{(1)}}&&=-\Gamma-\frac{\beta_a\beta_b}{\alpha}(1+\varepsilon),
\\[2pt]
&\lambda_{a_i|b_j|a_k|b_l}^{{}^{(1)}\hf a_k|{}^{(1)}\hf b_l|a_i^{(1)}|a_i^{(1)}}&&=-\varepsilon\beta_b, &&\qquad\quad
\lambda_{a_i|b_j|a_k|b_l}^{{}^{(1)}\hf a_k|{}^{(1)}\hf a_k|a_i^{(1)}|b_j^{(1)}}&&=\frac{\beta_b}{\alpha},\\[2pt]
& \lambda_{a_i|b_j|a_k|b_l}^{{}^{(1)}\hf b_l|{}^{(1)}\hf b_l|a_i^{(1)}|b_j^{(1)}}&&=-\frac{\varepsilon \beta_a}{\alpha}, &&\qquad\quad
 \lambda_{a_i|b_j|a_k|b_l}^{{}^{(1)}\hf a_k|{}^{(1)}\hf b_l|b_j^{(1)}|b_j^{(1)}}&&=\varepsilon\beta_a, \\[2pt]
& \lambda_{a_i|b_j|a_k|b_l}^{{}^{(1)}\hf a_k|{}^{(1)}\hf b_l|a_i^{(1)}|b_j^{(1)}}&&=\varepsilon. &&\qquad\quad
\end{alignat*}
\end{theorem}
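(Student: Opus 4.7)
The plan is to classify every listed coefficient $\lambda_{a|b|c|d}^{e|f|g|h}$ according to the combined height $\mathfrak{h}([a,b])+\mathfrak{h}([c,d])$ and verify each case from the ingredients already assembled. First I would dispose of the height-zero identity $\lambda_{x|x|y|y}^{{}^{(1)}\hf y|{}^{(1)}\hf y|x^{(1)}|x^{(1)}}=1$, which is the standing fact recalled at the end of Section~\ref{Seccion Preliminares}. Then I would turn to the height-one coefficients. The eight values whose top index matches the pattern in~\eqref{definicion de a sub r}--\eqref{definicion de a sub l} are just restatements of Notation~\ref{notacion alphas}; they follow by combining Proposition~\ref{vectores constantes}, Notation~\ref{definicion gamma} and Remark~\ref{simplificacion} (equalities~\eqref{definicion parametros}), which yield the values $\beta_a$, $\beta_b$, $-\beta_a/\alpha$, $-\varepsilon\beta_b/\alpha$, $\alpha$, $\varepsilon\alpha$, $1/\alpha$, $\varepsilon/\alpha$ in one stroke. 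The remaining four height-one coefficients have the ``opposite'' top index (e.g.\ $e=f={}^{(1)}b_l$ instead of ${}^{(1)}a_k$), and I would obtain them from the normalization~\eqref{conjuntista}: in each such case the sum $\sum_{e,g}\lambda_{a|b|c|d}^{e|e|g|g}=0$ has exactly two admissible terms (by the interval constraints of item~4 in Section~\ref{Seccion Preliminares}), one of which has just been determined, forcing the value of the other.

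Next I would treat the height-two coefficients, those with $a_i\prec b_j$ and $a_k\prec b_l$. The ``diagonal'' coefficient $\lambda_{a_i|b_j|a_k|b_l}^{{}^{(1)}\hf a_i|{}^{(1)}\hf a_i|a_k^{(1)}|a_k^{(1)}}=\Gamma_{i|j|k|l}$ is immediate from~\eqref{los Gamas son iguales} in Proposition~\ref{Gammas contantes y equivalencia}. For the coefficients with ``mixed'' top, e.g.\ $({}^{(1)}a_k, {}^{(1)}a_k, a_i^{(1)}, b_j^{(1)})$, $({}^{(1)}a_k, {}^{(1)}a_k, b_j^{(1)}, b_j^{(1)})$, $({}^{(1)}a_k, {}^{(1)}b_l, a_i^{(1)}, a_i^{(1)})$, $({}^{(1)}b_l, {}^{(1)}b_l, a_i^{(1)}, a_i^{(1)})$, $({}^{(1)}a_k, {}^{(1)}b_l, b_j^{(1)}, b_j^{(1)})$ and $({}^{(1)}a_k, {}^{(1)}b_l, a_i^{(1)}, b_j^{(1)})$, I would apply the splitting identity~\eqref{split}, splitting the subscript interval at the corner $(b_j,a_k)$ or $(a_i,b_l)$ (whichever is compatible with the top-index constraints). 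Each such split expresses the height-two coefficient as a product of two height-one coefficients already computed in the previous step; for instance
\[
\lambda_{a_i|b_j|a_k|b_l}^{{}^{(1)}a_k|{}^{(1)}a_k|b_j^{(1)}|b_j^{(1)}} = \lambda_{a_i|b_j|a_k|a_k}^{{}^{(1)}a_k|{}^{(1)}a_k|b_j^{(1)}|b_j^{(1)}}\,\lambda_{b_j|b_j|a_k|b_l}^{{}^{(1)}a_k|{}^{(1)}a_k|b_j^{(1)}|b_j^{(1)}} = \frac{\beta_a}{\alpha}\cdot\beta_b,
\]
and analogously for the rest. In each case the product reproduces the formula claimed in the theorem.

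The one coefficient that escapes both direct recognition and splitting is $\lambda_{a_i|b_j|a_k|b_l}^{{}^{(1)}b_l|{}^{(1)}b_l|b_j^{(1)}|b_j^{(1)}}$, whose top is fully ``maximal''. I would pin it down with~\eqref{conjuntista}: the admissible values of $(e,g)$ in $\sum_{e,g}\lambda_{a_i|b_j|a_k|b_l}^{e|e|g|g}=0$ are the four combinations $(e,g)\in\{{}^{(1)}a_k,{}^{(1)}b_l\}\times\{a_i^{(1)},b_j^{(1)}\}$, of which three contribute $\Gamma$, $\beta_a\beta_b/\alpha$ and $\varepsilon\beta_a\beta_b/\alpha$ by the previous steps, so the fourth is forced to be $-\Gamma-\beta_a\beta_b(1+\varepsilon)/\alpha$, as claimed. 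The hard part is really just the bookkeeping of cases and of the various $(i,j,k,l)$-dependencies disappearing through Proposition~\ref{vectores constantes}; no delicate calculation is required, since all structural ingredients -- the constancy $\Gamma_{i|j|k|l}=\Gamma$, the constancy of $\alpha_h$ and $\beta_h$, the reduction to the four scalars $\varepsilon,\alpha,\beta_a,\beta_b$, and the multiplicative splitting identity~\eqref{split} -- are already in place.
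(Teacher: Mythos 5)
Your proposal is correct and follows essentially the same route as the paper, which likewise derives every entry from the normalization~\eqref{conjuntista}, the splitting identity~\eqref{split}, Proposition~\ref{vectores constantes} and equalities~\eqref{definicion parametros}, working out one representative case. Your version merely organizes the bookkeeping more explicitly by total height (direct recognition of the $\alpha$'s, $\beta$'s and $\Gamma$ via Notation~\ref{notacion alphas}, then~\eqref{conjuntista} for the complementary height-one terms and the fully maximal height-two term, and~\eqref{split} for the rest), which is exactly what the paper's terse proof intends.
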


\begin{proof}
The first equality holds because $r$ induces $r_0$. The other formulas can be obtained using~\eqref{conjuntista}, \eqref{split}, the discussion at the beginning of Section~\ref{seccion A family of examples}, Proposition~\ref{vectores constantes} and equalities~\eqref{definicion parametros}. We prove for example that
\begin{equation}\label{ejemplo}
\lambda_{a_i|b_j|a_k|b_l}^{{}^{(1)}\hf b_l|{}^{(1)}\hf b_l|a_i^{(1)}|b_j^{(1)}}=-\frac{\varepsilon \beta_a}{\alpha}.
\end{equation}
By~\eqref{split} and the discussion at the beginning of Section~\ref{seccion A family of examples} we know that
$$
\lambda_{a_i|b_j|a_k|b_l}^{{}^{(1)}\hf b_l|{}^{(1)}\hf b_l|a_i^{(1)}|b_j^{(1)}}= \lambda_{a_i|a_i|a_k|b_l}^{{}^{(1)}\hf b_l|{}^{(1)}\hf
b_l|a_i^{(1)}|a_i^{(1)}} \lambda_{a_i|b_j|b_l|b_l}^{{}^{(1)}\hf b_l|{}^{(1)}\hf b_l|a_i^{(1)}|b_j^{(1)}}
=-\beta_{la}\alpha_{rb}.
$$
We finish the computation of the equality~\eqref{ejemplo}, noting that by Proposition~\ref{vectores constantes} and equalities~\eqref{definicion
parametros} we have $\beta_{la}=\beta_a$ and $\alpha_{rb}^i=\frac{\varepsilon}{\alpha}$.
\end{proof}

Collecting the results in this section we arrive at the following complete description:

\begin{theorem}\label{teorema principal}
The non-degenerate coalgebra automorphism $r$ introduced above Proposition~\ref{caso alpha uno y epsilon 1} is a solution of the braid equation that induces $r_0$ on $X\times X$ and has set-type square up to height~$1$ if and only if the parameters $\varepsilon$, $\alpha$, $\beta_a$, $\beta_b$
and $\Gamma$ belong to one of the families given in the following table:
\end{theorem}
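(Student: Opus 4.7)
The plan is to assemble all the prior reductions into a single classification. By Theorem~\ref{EYB para longitud mayor que uno}, the map $r$ satisfies the braid equation if and only if equalities~\eqref{Caso 110}--\eqref{Caso 111} hold together with the equality~\eqref{eq braided} for $S\subseteq T$ with $\mathfrak h(T)\le 1$. By Proposition~\ref{proposition igualdad epsilon}, the latter condition is equivalent to~\eqref{igualdad para epsilon}, while by Propositions~\ref{pepe} and~\ref{Gammas contantes y equivalencia}, the former is equivalent to the three conditions~\eqref{los Gamas son iguales}, \eqref{formulas equivalentes} and~\eqref{formulas equivalentes 1}. So the first step is simply to record that the data $(\varepsilon,\alpha,\beta_a,\beta_b,\Gamma)$ must simultaneously satisfy this system of four equations.

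Next, I would split according to Remark~\ref{clasificacion}, which enumerates the solutions of~\eqref{igualdad para epsilon} into the possibilities: (i)~$\varepsilon=1$ and $\alpha=1$; (ii)~$\varepsilon=1$, $\alpha\ne 1$, $\beta_a=\beta_b$; (iii)~$\varepsilon=-1$, $\chr K\ne 2$ and $\alpha=1$, $\beta_a=0$; and (iv)~$\varepsilon=-1$, $\chr K\ne 2$, $\alpha\ne 1$, and $\beta_b=\beta_a\frac{1+\alpha}{1-\alpha}$. In each case the remaining conditions~\eqref{formulas equivalentes} and~\eqref{formulas equivalentes 1} reduce, and these reductions have already been carried out in Proposition~\ref{caso alpha uno y epsilon 1} and the two propositions that follow it. I would invoke those propositions one by one and simply tabulate the resulting one-parameter (or two-parameter) families of admissible tuples $(\varepsilon,\alpha,\beta_a,\beta_b,\Gamma)$.

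Finally, the converse direction is verified by checking that any tuple listed in the table does satisfy \eqref{igualdad para epsilon}, \eqref{los Gamas son iguales}, \eqref{formulas equivalentes} and \eqref{formulas equivalentes 1}, so that the earlier results can be run backwards to produce a genuine $r$ with the required properties; this is automatic from the same propositions. The only mildly delicate point is book-keeping: one must make sure that the cases in Remark~\ref{clasificacion} cover the parameter space without overlap issues that would duplicate a family in the table (e.g.\ the overlap at $\alpha=1$, $\beta_a=\beta_b$ on the boundary between cases (i) and (ii), and the analogous overlap at $\alpha=-1$ in case (iv) versus the $\alpha^2=1$ items of the $\varepsilon=-1$ proposition). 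I expect this organizational step---making the table genuinely list each family once, with the correct free parameters and side conditions---to be the main obstacle, since everything else is just a transcription of the already proved propositions.
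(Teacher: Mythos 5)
Your proposal is correct and follows essentially the same route as the paper, which likewise obtains the theorem by combining Theorem~\ref{EYB para longitud mayor que uno} with Propositions~\ref{proposition igualdad epsilon}, \ref{pepe} and~\ref{Gammas contantes y equivalencia} to reduce everything to conditions~\eqref{igualdad para epsilon}, \eqref{los Gamas son iguales}, \eqref{formulas equivalentes} and~\eqref{formulas equivalentes 1}, and then tabulates the case analysis of Remark~\ref{clasificacion} via Proposition~\ref{caso alpha uno y epsilon 1} and the two propositions following it. The bookkeeping issues you flag (the $\alpha=\pm 1$ boundaries and the $\chr K=2$ degeneration in the $\varepsilon=1$, $\alpha=1$ case) are exactly what produces the separate rows of the table, so your outline matches the paper's argument.
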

\setlength{\tabcolsep}{4pt}
\ra{1.2}
\setfracpadding
\begin{longtabu}{lllll}
\caption{Families for $\varepsilon$, $\alpha$, $\beta_a$, $\beta_b$ and $\Gamma$} \\
\toprule
$\#$ &  \parbox[c]{.209\linewidth} {\begin{center} Fixed values in each family\end{center}}
&  \parbox[c]{.209\linewidth} {\begin{center} Dependent values in each family\end{center}}
& \parbox[c]{.209\linewidth} {\begin{center} Parameters \end{center}}
& \parbox[c]{.14\linewidth} {\begin{center} $\chr K$ \end{center}}\\
\midrule
\endfirsthead
$\#$ &  \parbox[c]{.209\linewidth} {\begin{center} Fixed values in each family\end{center}}
&  \parbox[c]{.209\linewidth} {\begin{center} Dependent values in each family\end{center}}
& \parbox[c]{.209\linewidth} {\begin{center} Parameters \end{center}} \\
\midrule
\endhead
\endfoot
\bottomrule
\endlastfoot
1. & $\varepsilon=1$, $\alpha=1$& $\beta_b=\beta_a$ & $\beta_a,\Gamma\in K$ &arbitrary\\
\midrule
2.& $\varepsilon=1$, $\alpha=1$&  $\Gamma=-\beta_a\beta_b$ & $\beta_a,\beta_b\in K$ &$\chr K \ne 2$\\
   &  &  & $\beta_b\ne \beta_a$\\
\midrule
3. & $\varepsilon=1$, $\alpha=1$& & $\beta_a,\beta_b,\Gamma\in K$ &$\chr K=2$\\
   &  &  & $\beta_b\ne \beta_a$\\
\midrule
4. & $\varepsilon=1$, $\alpha=-1$& $\beta_b=\beta_a$ & $\beta_a,\Gamma\in K$&arbitrary \\
\midrule
5.& $\varepsilon=1$ & $\beta_b=\beta_a$ & $\beta_a\in K$, $\alpha\in K^{\times}$&arbitrary \\
   &  &$\Gamma=-\frac{\beta_a^2}{\alpha}$    & $\alpha^2\ne 1$\\
\midrule
6. & $\varepsilon=-1$, $\alpha=1$, $\beta_a=0$ && $\beta_b,\Gamma\in K$ &arbitrary\\
\midrule
7. & $\varepsilon=-1$, $\alpha=-1$, $\beta_b=0$ && $\beta_a,\Gamma\in K$ &arbitrary\\
\midrule
8. & $\varepsilon=-1$, &$\beta_b=\beta_a\frac{1+\alpha}{1-\alpha}$ & $\beta_a$, $\alpha\in K^{\times}$ &arbitrary\\
&  &$\Gamma=-\frac{\beta_a^2(1+\alpha^2)}{\alpha(1-\alpha)^2}$  &$\alpha^2\ne 1$
\label{tabla}
\end{longtabu}

\begin{proposition}\label{cuando es de cuadrado conjuntista}
The solutions~$r$ of Theorem~\ref{teorema principal} have set-type square if and only if either $\varepsilon=-1$ and $2\Gamma = 0$ or $\varepsilon=1$ and $2\Gamma=-\frac{2\beta_a\beta_b}{\alpha}$.
\end{proposition}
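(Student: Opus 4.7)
The plan is to apply Proposition~\ref{rel a prec b y c prec d} directly: since each $r$ from Theorem~\ref{teorema principal} has set-type square up to height one and $(X,\le)$ itself has height one, $r$ has set-type square if and only if equation~\eqref{para altura 1} holds for every pair of covers $a\prec b$ and $c\prec d$ in $X$. In our setting these covers are exactly the pairs $a=a_i\prec b_j=b$ and $c=a_k\prec b_l=d$, so what remains is to check a single family of scalar identities, one for each quadruple $(i,j,k,l)$.

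Next I would substitute into~\eqref{para altura 1} the constants computed in this section. Proposition~\ref{vectores constantes} together with Remark~\ref{simplificacion} and Notation~\ref{definicion gamma} give
\[
\alpha_l(a_\cdot)(\cdot,\cdot)=\alpha,\quad \alpha_l(b_\cdot)(\cdot,\cdot)=\varepsilon\alpha,\quad \beta_l(a_\cdot)(\cdot,\cdot)=\beta_a,\quad \beta_l(b_\cdot)(\cdot,\cdot)=\beta_b,
\]
while~\eqref{los Gamas son iguales} yields $\Gamma_{a|b|c|d}=\Gamma_{{}^{(1)}\hf c|{}^{(1)}\hf d|a^{(1)}|b^{(1)}}=\Gamma$. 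Because $\phi_l$ and $\phi_r$ are poset automorphisms of a height-one poset, they preserve the partition of $X$ into minimal and maximal elements; hence the conjugates ${}^{(1)}\hf c$, ${}^{(1)}\hf d$, $a^{(1)}$, $b^{(1)}$ remain on the appropriate level and the constants above apply uniformly. After these substitutions~\eqref{para altura 1} collapses to
\[
\Gamma=-\Gamma-\frac{\beta_a\beta_b}{\alpha}-\frac{\beta_a\beta_b}{\varepsilon\alpha}.
\]

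Finally, since $\varepsilon^{-1}=\varepsilon$ for $\varepsilon\in\{\pm 1\}$, the displayed identity rearranges to $2\Gamma=-\frac{\beta_a\beta_b(1+\varepsilon)}{\alpha}$, which is $2\Gamma=0$ when $\varepsilon=-1$ and $2\Gamma=-\frac{2\beta_a\beta_b}{\alpha}$ when $\varepsilon=1$, as claimed. The only mild obstacle is the bookkeeping needed to verify that the conjugates ${}^{(1)}\hf c$, ${}^{(1)}\hf d$, $a^{(1)}$, $b^{(1)}$ land on the correct level of $X$ so that Proposition~\ref{vectores constantes} applies with the stated constants; but this is immediate from the height-one hypothesis, and the remaining work is an algebraic simplification.
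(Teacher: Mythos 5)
Your proposal is correct and follows essentially the same route as the paper: reduce via Proposition~\ref{rel a prec b y c prec d} to equation~\eqref{para altura 1}, then use Proposition~\ref{vectores constantes}, Remark~\ref{simplificacion} and equality~\eqref{los Gamas son iguales} to collapse it to $2\Gamma=-\frac{\beta_a\beta_b}{\alpha}-\frac{\beta_a\beta_b}{\varepsilon\alpha}$. The extra care you take in checking that $\phi_l$ and $\phi_r$ preserve the two levels of $X$ is a welcome, if minor, addition to the paper's terser argument.
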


\begin{proof} Let $r$ be as Theorem~\ref{teorema principal}. By Proposition~\ref{vectores constantes} and equality~\eqref{los Gamas son iguales} all the equations~\eqref{para altura 1}
reduce to
$$
2\Gamma= - \frac{\beta_b\beta_a}{\alpha} - \frac{\beta_b\beta_a}{\varepsilon\alpha},
$$
which is clearly equivalent to $\varepsilon=-1$ and $2\Gamma = 0$ or $\varepsilon=1$ and $2\Gamma=-\frac{2\beta_a\beta_b}{\alpha}$. By Proposition~\ref{rel a prec b y c prec d} this finishes the proof.
\end{proof}

\begin{examples}
The hypothesis of Theorems~\ref{teorema parametros} and~\ref{teorema principal} are satisfied for instance in the following cases
\begin{itemize}[itemsep=0.7ex, topsep=0.7ex]
  \item[-] $u=4$, $v=1$, $\phi_r(a_i)=a_{i+1}$ and $\phi_l(a_i)=a_{i+2}$, where the sums are taken modulo $4$.

  \item[-] $u$ and $v$ are odd, $\phi_r(a_i)=\phi_l(a_i)=a_{i+1}$ and $\phi_r(b_i)=\phi_l(b_i)=b_{i+1}$, where the sums are taken modulo $u$ and $v$ respectively.

   \item[-] Any example can be constructed in the following way: take an $u$-cycle $\varsigma_a$ of~$\mathds{N}_u$ and a $v$-cycle $\varsigma_b$ of~$\mathds{N}_v$, and then take $\sigma_a$ and $\tau_a$ any powers of $\varsigma_a$ such that $\sigma_a\circ \tau_a=\varsigma_a$, and take $\sigma_b$ and $\tau_b$ any powers of $\varsigma_b$ such that $\sigma_b\circ \tau_b=\varsigma_b$.  In fact, if $\varsigma_a=\sigma_a\circ \tau_a=\tau_a\circ\sigma_a$ is an $u$-cycle, then $\sigma_a$ and $\tau_a$ are in the centralizer of $\varsigma_a$ in $S_u$ which is $\langle \varsigma_a\rangle$. The same argument proves that both $\sigma_b$ and $\tau_b$ are powers of $\varsigma_b$. For instance, one can take $\sigma_a=\varsigma_a$, $\sigma_b=\varsigma_b$, $\tau_a=\ide$ and $\tau_b=\ide$.
\end{itemize}
\end{examples}

\begin{bibdiv}
\begin{biblist}

\bib{AGV}{article}{
   author={Angiono, Iv\'an},
   author={Galindo, C\'esar},
   author={Vendram\'in, Leandro},
   title={Hopf braces and {Yang-Baxter} operators},
   status={to appear in Proc. Amer. Math. Soc.},
   eprint = {arXiv:1604.02098},
   date = {2016},
}

\bib{CJR}{article}{
   author={Ced{\'o}, Ferran},
   author={Jespers, Eric},
   author={del R{\'{\i}}o, {\'A}ngel},
   title={Involutive Yang-Baxter groups},
   journal={Trans. Amer. Math. Soc.},
   volume={362},
   date={2010},
   number={5},
   pages={2541--2558},
   issn={0002-9947},
   review={\MR{2584610}},
   doi={10.1090/S0002-9947-09-04927-7},
}

\bib{CJO2}{article}{
   author={Ced{\'o}, Ferran},
   author={Jespers, Eric},
   author={Okni{\'n}ski, Jan},
   title={Braces and the Yang-Baxter equation},
   journal={Comm. Math. Phys.},
   volume={327},
   date={2014},
   number={1},
   pages={101--116},
   issn={0010-3616},
   review={\MR{3177933}},
   doi={10.1007/s00220-014-1935-y},
}

\bib{CJO}{article}{
   author={Ced{\'o}, Ferran},
   author={Jespers, Eric},
   author={Okni{\'n}ski, Jan},
   title={Retractability\hspace{-0.45pt} of\hspace{-0.45pt} set\hspace{-0.45pt} theoretic\hspace{-0.45pt} solutions\hspace{-0.45pt} of\hspace{-0.45pt} the\hspace{-0.45pt} Yang-Baxter\hspace{-0.45pt} equation},
   journal={Adv. Math.},
   volume={224},
   date={2010},
   number={6},
   pages={2472--2484},
   issn={0001-8708},
   review={\MR{2652212}},
   doi={10.1016/j.aim.2010.02.001},
}

\bib{De}{article}{
   author={Dehornoy, Patrick},
   title={Set-theoretic solutions of the Yang-Baxter equation, RC-calculus,
   and Garside germs},
   journal={Adv. Math.},
   volume={282},
   date={2015},
   pages={93--127},
   issn={0001-8708},
   review={\MR{3374524}},
   doi={10.1016/j.aim.2015.05.008},
}

\bib{Dr}{article}{
   author={Drinfel\cprime d, V. G.},
   title={On some unsolved problems in quantum group theory},
   conference={
      title={Quantum groups},
      address={Leningrad},
      date={1990},
   },
   book={
      series={Lecture Notes in Math.},
      volume={1510},
      publisher={Springer, Berlin},
   },
   date={1992},
   pages={1--8},
   review={\MR{1183474}},
}

\bib{ESS}{article}{
   author={Etingof, Pavel},
   author={Schedler, Travis},
   author={Soloviev, Alexandre},
   title={Set-theoretical solutions to the quantum Yang-Baxter equation},
   journal={Duke Math. J.},
   volume={100},
   date={1999},
   number={2},
   pages={169--209},
   issn={0012-7094},
   review={\MR{1722951}},
   doi={10.1215/S0012-7094-99-10007-X},
}

\bib{GIVdB}{article}{
   author={Gateva-Ivanova, Tatiana},
   author={Van den Bergh, Michel},
   title={Semigroups of $I$-type},
   journal={J. Algebra},
   volume={206},
   date={1998},
   number={1},
   pages={97--112},
   issn={0021-8693},
   review={\MR{1637256}},
   doi={10.1006/jabr.1997.7399},
}

\bib{GI}{article}{
   author={Gateva-Ivanova, Tatiana},
   title={A combinatorial approach to the set-theoretic solutions of the
   Yang-Baxter equation},
   journal={J. Math. Phys.},
   volume={45},
   date={2004},
   number={10},
   pages={3828--3858},
   issn={0022-2488},
   review={\MR{2095675}},
   doi={10.1063/1.1788848},
}

\bib{GGV1}{article}{
   author={Guccione, Jorge Alberto},
    author={Guccione, Juan Jos\'e},
   author={Valqui, Christian},
   title={Solutions of the braid equation and orders},
   journal={Algebr Represent Theor}
   date={2018},
}

\bib{LYZ}{article}{
   author={Lu, Jiang-Hua},
   author={Yan, Min},
   author={Zhu, Yong-Chang},
   title={On the set-theoretical Yang-Baxter equation},
   journal={Duke Math. J.},
   volume={104},
   date={2000},
   number={1},
   pages={1--18},
   issn={0012-7094},
   review={\MR{1769723}},
   doi={10.1215/S0012-7094-00-10411-5},
}

\bib{Ru}{article}{
   author={Rump, Wolfgang},
   title={A decomposition theorem for square-free unitary solutions of the
   quantum Yang-Baxter equation},
   journal={Adv. Math.},
   volume={193},
   date={2005},
   number={1},
   pages={40--55},
   issn={0001-8708},
   review={\MR{2132760}},
   doi={10.1016/j.aim.2004.03.019},
}

\bib{So}{article}{
   author={Soloviev, Alexander},
   title={Non-unitary set-theoretical solutions to the quantum Yang-Baxter
   equation},
   journal={Math. Res. Lett.},
   volume={7},
   date={2000},
   number={5-6},
   pages={577--596},
   issn={1073-2780},
   review={\MR{1809284}},
   doi={10.4310/MRL.2000.v7.n5.a4},
}

\bib{Ta}{article}{
   author={Takeuchi, Mitsuhiro},
   title={Survey on matched pairs of groups---an elementary approach to the
   ESS-LYZ theory},
   conference={
      title={Noncommutative geometry and quantum groups},
      address={Warsaw},
      date={2001},
   },
   book={
      series={Banach Center Publ.},
      volume={61},
      publisher={Polish Acad. Sci. Inst. Math., Warsaw},
   },
   date={2003},
   pages={305--331},
   review={\MR{2024436}},
}

\end{biblist}
\end{bibdiv}

\end{document}